    \newcommand{\hf}{\frac{1}{2}}
    \newcommand{\Dh}{\Delta_h}
    \newcommand{\nrm}[1]{\left\| #1 \right\|}
     \newcommand{\msfT}{\mathsf{T}}
    \newcommand\dt {{\Delta t}}
    \def\x{\mbox{\boldmath $x$}}
    \def\0{\mbox{\boldmath $0$}}
	\newtheorem{thm}{Theorem}[section]
	\newtheorem{prop}[thm]{Proposition}
	\newtheorem{cor}[thm]{Corollary}
	\newtheorem{lem}[thm]{Lemma}
	\newtheorem{rem}[thm]{Remark}
\begin{document}
	\title{A refined convergence estimate for a fourth order finite difference numerical scheme to the Cahn-Hilliard equation}
	
		\author{
Jing Guo\thanks{School of Software Engineering, South China University of Technology, Guangzhou, Guangdong 510006, 
P. R. China ({\tt 
z7198185@gmail.com})}
        \and
Cheng Wang\thanks{Mathematics Department, The University of 
Massachusetts, North Dartmouth, MA 02747, USA 
({\tt cwang1@umassd.edu})}
         \and
Yue Yan\thanks{School of Mathematics, Shanghai University of 
Finance and Economics, Shanghai 200433, P.R. China ({\tt corresponding author: yan.yue@mail.shufe.edu.cn})}
         \and
Xingye Yue\thanks{School of Mathematical Sciences, 
Soochow University, Suzhou, Jiangsu 215006, 
P. R. China ({\tt xyyue@suda.edu.cn})}
	}

	\maketitle
	\numberwithin{equation}{section}

	\begin{abstract}

In this article we present a refined convergence analysis for a second order accurate in time, fourth order finite difference numerical scheme for the 3-D Cahn-Hilliard equation, with an improved convergence constant. A modified backward differentiation formula temporal discretization is applied, and a Douglas-Dupont artificial regularization is included to ensure the energy stability. 
In fact, a standard application of discrete Gronwall inequality leads to a convergence constant dependent on the interface width parameter in an exponential singular form. We aim to obtain an improved estimate, with such a singular dependence only in a polynomial order. A uniform in time functional bounds of the numerical solution, including the higher order Sobolev norms, as well as the associated bounds for the first and second order temporal difference stencil, have to be carefully established. Certain recursive analysis has to be applied in the analysis for the BDF-style temporal stencil. As a result, we are able to apply a spectrum estimate for the linearized Cahn-Hilliard operator, and this technique leads to the refined error estimate. A three-dimensional numerical example of accuracy check is presented as well.
	\end{abstract}
		
\noindent
{\bf Key words.} \, Cahn-Hilliard equation, long stencil fourth order finite difference approximation, BDF2 temporal discretization, linearized spectrum estimate, error analysis with an improved convergence constant

\medskip
	
\noindent 
{\bf AMS Subject Classification} \, 35K30, 35K55, 65M06, 65M12, 65T40

	\section{Introduction}
In this article we consider the Cahn-Hilliard model. For any $\phi \in H^1 (\Omega)$, with $\Omega \subset R^d$ ($d=2$ or $d=3$), the energy functional is given by (see \cite{cahn58} for a detailed derivation): 
	\begin{equation}
	\label{AC energy}
E(\phi)=\int_{\Omega}\left( \varepsilon^{-1} ( \frac{1}{4} \phi^4-\frac{1}{2} \phi^2 ) + \frac{\varepsilon}{2}|\nabla \phi|^2\right) d {\bf x} , 
	\end{equation} 
in which the parameter $\varepsilon$ corresponds to the diffuse interface width.  In turn, the Cahn-Hilliard equation becomes an $H^{-1}$ conserved gradient flow of the energy functional~\eqref{AC energy}: 
	\begin{equation}
	\label{CH equation}
\phi_t=\Delta \mu , \quad   \mu := \delta_\phi E = \varepsilon^{-1} ( \phi^3 - \phi ) - \varepsilon \Delta \phi , 
	\end{equation}
where $\mu$ is the chemical potential. Periodic boundary conditions are imposed for both the phase field, $\phi$, and the chemical potential, $\mu$, as well as the higher order derivatives.  Subsequently, the energy dissipation law follows from an inner product with~\eqref{CH equation} by $\mu$: $ E' (t)=-\int_{\Omega} |\nabla \mu |^2 d {\bf x} \le 0$. Of course, since the dynamical equation is constructed as an $H^{-1}$ gradient flow, the mass conservative identity is always valid:  $\int_\Omega \partial_t \phi  d{\bf x} = 0$. 


Most existing numerical works for the Cahn-Hilliard equation have been focused on either the second order finite difference or linear/quadratic polynomial finite element spatial discretization. 
Meanwhile, a fourth order and even more accurate spatial approximation would be suitable to capture the more detailed structure with a reduced computational cost. Among the spatially higher order numerical algorithm for the Cahn-Hilliard equation, the spectral/pseudo-spectral approximation is worthy of investigation; see the related references~\cite{cheng2022a, cheng16a, LiD2016b, LiX21a, LiX23, LiX24}, etc. On the other hand, the spectral/pseudo-spectral differentiation corresponds to a global operator in space, and the associated numerical implementation requires an $O (N^d \ln N)$ float point calculations, instead of $O (N^d)$ scale for the finite difference ones. In contrast, a fourth order finite difference differentiation turns out to be a good choice to balance the high order spatial accuracy and the computational cost. 

A second order accurate in time, fourth order long-stencil finite difference numerical scheme has been recently proposed and analyzed for the Cahn-Hilliard equation~\cite{cheng2019a}. In the temporal approximation, a second order backward differentiation formula (BDF) stencil is applied, combined with a second order extrapolation formula applied to the concave diffusion term. Other than these standard discretization, a second order artificial Douglas-Dupont regularization term is included in the numerical algorithm to ensure the energy stability at a theoretical level, following similar ideas as~\cite{chen12, chen2019b, cheng2019c, fengW18b, Hao2021, LiW18, Meng2020, yan18}, etc.  In addition, an optimal rate convergence analysis is derived for the numerical scheme, with second order temporal accuracy and fourth order spatial accuracy. 

On the other hand, as always observed in the standard error estimate, an application of discrete Gronwall inequality leads to a convergence constant dependent on the final time $T$ and on the interface parameter $\varepsilon$ in a singular and exponential way. In more details, its dependence on $T$ and $\varepsilon$ is usually in a form of $\exp ( C \varepsilon^{-m} T)$, with $m$ an integer. To address this subtle issue, a few theoretical efforts have been reported to improve the convergence constant for the Allen-Cahn and Cahn-Hilliard equations, such as \cite{feng04} for a first-order in time, fully discrete finite element scheme, \cite{feng15, feng16} for a first-order in time, discontinuous Galerkin scheme. In these theoretical works, the convergence constant is of order $O(e^{C_0 T} \varepsilon^{-m_0})$, for some positive integer $m_0$ and a constant $C_0$ independent on $\varepsilon$, instead of the singularly $\varepsilon$-dependent exponential growth. This improvement was based on a subtle spectrum analysis for the linearized Cahn-Hilliard operator (with certain given structure assumptions of the solution), provided in earlier PDE analysis literatures~\cite{alikakos94, alikakos93, chenx94, chenx96, chenx98}, etc. 

Meanwhile, most existing works of improved convergence constant have been focused on the numerical scheme with first order accurate temporal discretization. A similar analysis has been performed for a semi-discrete, second order accurate in time numerical method in a more recent work~\cite{guo21}, in which a modified Crank-Nicolson temporal approximation is taken, and the space is kept continuous in the analysis. In this article, we provide a refined convergence and error analysis for the second order accurate in time (modified BDF2 temporal discretization), fourth order long stencil difference numerical scheme for the 3-D Cahn-Hilliard equation, as proposed in~\cite{cheng2019a}. 

The energy stability of the proposed numerical scheme has been proved in the existing work, which in turn gives a discrete $H^1$ bound of the numerical solution. Meanwhile, such a bound is not sufficient to derive a refined error estimate. To overcome the difficulty associated with the discrete Gronwall inequality, we have to analyze the numerical errors, which come from the nonlinear part, the concave linear part and the surface diffusion part, in a unified way. The spectrum estimate for the linearized Cahn-Hilliard operator enables one to derive a refined estimate for the combined error. To achieve such a goal, a uniform in time (discrete) $H^m$ (for $m \ge 2$) bound for the numerical solution is needed, in addition to the $H^1$ bound from the energy stability estimate. Such a bound could be theoretically accomplished by taking a discrete inner product with $(-\Delta_{h, (4)})^m \phi^{n+1}$  (with $\phi$ the numerical solution, $\Delta_{h, (4)}$, $\nabla_{h, (4)}$ the long stencil difference approximation), combined with repeatedly application of discrete H\"older inequality and Sobolev estimates. In particular, the associated estimates in the fourth order long-stencil difference approximation have to be carefully derived, with the help of the Fourier analysis and the eigenvalue analysis. Moreover, all these bounds are dependent on the initial $H^m$ data, $\frac{1}{\varepsilon}$ (in a polynomial form), and independent on $T$. Since this bound is valid for any $m \ge 2$, a further observation indicates that $\| \phi^{n+1} - \phi^n \|_{H^k}$ (for an integer $k$) is always of order $O (\dt)$, and the constant is independent on $T$. In turn, a uniform in time $O (\dt)$ estimate for $\| \phi^{n+1} - \phi^n \|_{H^k}$ becomes available, and this bound turns out to be independent on the convergence analysis. More importantly, a difference analysis between $\phi^{n+1} - \phi^n$ and $\phi^n - \phi^{n-1}$ results in a uniform in time $O (\dt^2)$ estimate for $\| \phi^{n+1} - 2 \phi^n + \phi^{n-1} \|_{H^k}$, with the constant dependent on $\frac{1}{\varepsilon}$ in a polynomial form, independent on $T$. Due to the extended structure of the BDF-style temporal stencil, in comparison with the Crank-Nicolson version~\cite{guo21}, certain recursive analysis has to be applied in the analysis for the BDF2 temporal stencil. 

  In fact, these preliminary estimates play a crucial role in the refined error analysis. The numerical error evolutionary equation is evaluated at the time instant $t^{n+1}$, and its difference with the exact PDE solution is of order $O (\dt^2 + h^4)$, with the help of $\| \phi^{n+1} - 2 \phi^n + \phi^{n-1} \|_{H^k}$ estimate, as well as the fourth order difference approximation analysis. In addition, to analyze the finite difference scheme over a uniform numerical grid, we estimate the difference between the discrete $\nabla_{h, (4)}$ norm of the numerical error function and the continuous $H^1$ norm of its continuous version, with the help of the discrete and continuous Fourier analysis. This Fourier analysis also enables us to perform an estimate for the difference between the discrete $\ell^2$ inner product associated with the nonlinear term and its continuous version, and some aliasing error control techniques have to be applied~\cite{gottlieb12b}. As a result of these preliminary estimates, we are able to apply the spectrum analysis for the linearized Cahn-Hilliard operator~\cite{alikakos94, alikakos93, chenx94}, so that all the numerical error inner product terms, in the discrete $H^{-1}$ space, are analyzed in a unified way. Such a unified error estimate leads to a coefficient of the numerical error, in the discrete $H^{-1}$ norm, independent on $\varepsilon$. Consequently, we arrive at a discrete $H^{-1}$ error estimate of order $O (\dt^2 + h^4)$, with the convergence constant in the form of $O(e^{C_0^* T} \varepsilon^{-m_0})$, with $C_0^*$ independent on $\varepsilon$.

	The outline of the paper is given as follows. In Section~\ref{sec:numerical scheme} we review the long stencil fourth order finite difference approximation, the proposed numerical scheme, and a few preliminary estimates. The main theoretical result is stated as well. The higher order $H^m$ (for $m \ge 2$) numerical stability analysis is presented in Section~\ref{sec:Hm-stab}. As a result, a uniform in time estimate for both $\| \phi^{n+1} -  \phi^n \|_{H^k}$ and $\| \phi^{n+1} - 2 \phi^n + \phi^{n-1} \|_{H^k}$ is derived as well. Subsequently, the primary convergence analysis is presented in Section~\ref{sec:convergence}. In more detail, a discrete $H^{-1}$ error estimate of order $O (\dt^2 + h^4)$ is obtained, with the convergence constant dependent on $\frac{1}{\varepsilon}$ in a polynomial form. Moreover, a three-dimensional numerical example is presented in Section~\ref{sec:numerical results}, to perform an accuracy check to validate the theoretical analysis. Some concluding remarks are made in Section~\ref{sec:conclusion}.  In the appendices we give the detailed proof of a few technical lemmas associated with the fourth order long stencil difference approximation, and provide a detailed analysis to estimate the difference between the discrete $\ell^2$ inner product and its continuous version. 	

	\section{The numerical scheme and the main convergence result} \label{sec:numerical scheme} 
	
\subsection{The spatial discretization and the related notations}

For simplicity of presentation, it is assumed that $\Omega = (0,L)^3$, and we denote $L = N \cdot h$, in which $h = \frac{L}{N}$ ($N$ being a positive integer) stands for the mesh size. In turn, the following uniform, infinite grid with grid spacing $h>0$: $E := \{ x_{i} \ |\ i\in {\mathbb{Z}}\}$, with $x_i := (i-\frac12) h$, is introduced.
	
   The derivation of long stencil fourth order finite difference formula is based on the Taylor expansion for the test function~\cite{Fornberg1988, Fornberg1998, Iserles1996, BO1978}. For example, over a one-dimensional (1-D) uniform numerical grid, the fourth order approximations to the first and second order derivatives are formulated as  
	\begin{align} 
{\cal D}_{x,(4)}^1 f_i =& \tilde{D}_x ( 1 - \frac{h^2}{6} D_x^2 ) f_i 
  = \frac{ f_{i-2} - 8 f_{i-1} + 8 f_{i+1} - f_{i+2} }{12 h} 
  = f' (x_i) + O (h^4) ,
	\label{FD-4th-1-1} 
	\\
{\cal D}_{x,(4)}^2 f_i =& D_x^2 ( 1 - \frac{h^2}{12} D_x^2 ) f_i  = \frac{ - f_{i-2} + 16 f_{i-1} - 30 f_i + 16 f_{i+1} - f_{i+2} }{12 h^2 } 
= f'' (x_i) + O (h^4) ,
	\label{FD-4th-1-2} 
	\end{align} 
in which $\tilde{D}_x$ and $D_x^2$ stand for the standard centered difference approximation to the first and second order derivatives, respectively. The long stencil fourth order difference method has been extensively applied to various physical models, such as incompressible Boussinesq equation \cite{LWJ2003, WLJ2004}, geophysical fluid models \cite{LW2008, STWW2007}, the Maxwell equation \cite{FWWY2008}, harmonic mapping flow~\cite{Xia2022a}, etc. Its advantage over the compact fourth order difference approximation~\cite{LeeC14, LiY16, Song15} has also been described in~\cite{cheng2019a}. 

 Next, the following 3-D discrete periodic function space is taken into consideration: 
	\begin{eqnarray*}
{\mathcal V}_{\rm per} &:=& \{ f: E\times E \times E \rightarrow {\mathbb{R}}\  | 
 f_{i,j,k}= \nu_{i+ \ell N,j+ m N, k + n N}, \ \forall \, i,j,k, \ell, m, n \in \mathbb{Z}  \} . 
	\end{eqnarray*}		
Meanwhile, the mean zero space is introduced to facilitate the later analysis: 
	\[
\mathring{\mathcal V}_{\rm per}:= \Big\{ f \in {\mathcal V}_{\rm per} \ \Big| \overline{f} :=  \frac{h^3}{| \Omega|} \sum_{i,j,k=1}^N f_{i,j,k}  = 0 \Big\} .
	\]
The fourth order 3-D discrete Laplacian, $\Delta_{h,(4)} : {\mathcal V}_{\rm per}\rightarrow{\mathcal V}_{\rm per}$, becomes  
	\[
\Delta_{h,(4)} := {\cal D}_{x,(4)}^2 + {\cal D}_{y, (4)}^2 + {\cal D}_{z, (4)}^2 . 
	\]
Subsequently, the discrete inner product is defined as follows:   
	\begin{eqnarray*}
\langle f , g \rangle :=  h^3\sum_{i,j,k=1}^N f_{i,j,k} g_{i,j,k},\quad f,\, g \in {\mathcal V}_{\rm per} . 
	\end{eqnarray*}	
For any $f \in\mathring{\cal V}_{\rm per}$, there is a unique solution $\msfT_h[f]\in\mathring{\cal V}_{\rm per}$ such that $-\Delta_{h, (4)} \msfT_h[f] = f$, and we denote $ (-\Delta_{h,(4)} )^{-1} f = \msfT_h[f]$. In turn, a discrete analog of the $\mathring{H}^{-1}_{\rm per}$ inner product is introduced as 
	\begin{equation} 
\langle f , g \rangle_{-1, h} := \langle f, (-\Delta_{h, (4)})^{-1} g \rangle 
= \langle  (-\Delta_{h, (4)})^{-1} f , g \rangle, \quad f ,\, g \in\mathring{\cal V}_{\rm per}. 
      \label{-1 norm-1} 
	\end{equation} 
Of course, a discrete $H^{-1}$ norm could be defined as $\| f \|_{-1,h}^2 = \langle f , f \rangle_{-1,h}$, for any $f \in\mathring{\cal V}_{\rm per}$. If $f \in {\cal V}_{\rm per}$, then $\nrm{f}_2^2 := \langle f , f \rangle$; $\nrm{f}_p^p := \langle |f|^p , 1 \rangle$ ($1\le p< \infty$), and $\nrm{f}_\infty := \max_{1\le i, j, k \le N} | f_{i,j,k} |$.

In terms of the gradient inner product, the following notations are introduced: 
\begin{equation} 
\begin{aligned} 
  & 
  [ D_x f , D_x g ]_x := \frac12 h^3 \sum_{i,j,k=1}^N ( 
   ( D_x f )_{i+\frac12,j,k} ( D_x g )_{i+\frac12,j,k} 
   + (D_x f )_{i-\frac12,j,k} ( D_x g )_{i-\frac12,j,k} ) ,  
\\
  & 
  \mbox{$[ \cdot \, , \, \cdot ]_y$ and $[ \cdot \, , \, \cdot ]_z$ could be similarly defined} , 
\\
  & 
  \langle \nabla_h f , \nabla_h g \rangle 
  := [ D_x f , D_x g ]_x + [ D_y f , D_y g ]_y + [ D_z f , D_z g ]_z ,  \quad 
  \| \nabla_h f \|_2 := ( \langle \nabla_h f , \nabla_h f \rangle )^\frac12 . 
\end{aligned} 
  \label{gradient norm-1} 
\end{equation} 

The following summation by parts formulas have been reported in~\cite{cheng2019a}: 
	\begin{align} 
	  &
 \langle f , - \Delta_{h, (4)} g \rangle  = \langle - \Delta_{h, (4)} f , g \rangle = \langle \nabla_h f , \nabla_h g \rangle + \frac{h^2}{12} ( \langle D_x^2 f , D_x^2 g \rangle 
 + \langle D_y^2 f , D_y^2 g \rangle  + \langle D_z^2 f , D_z^2 g \rangle )  , \label{summation-1} 
\\
  & 
  \langle \msfT_h f , ( - \Delta_{h, (4)}) g \rangle = \langle f , g \rangle ,  \quad \forall 
  f, g \in \mathring{\cal V}_{\rm per} .   
	\label{summation-2}
	\end{align} 
As a result, the corresponding discrete gradient norm associated with the long stencil difference is given by 
\begin{eqnarray} 
  \| \nabla_{h,(4)} f \|_2^2 = \| \nabla_h f \|_2^2 + \frac{h^2}{12} ( \| D_x^2 f \|_2^2 
   + \| D_y^2 f \|_2^2 + \| D_z^2 f \|_2^2 ) . 
  \label{energy stability-2} 
\end{eqnarray}
Furthermore, the discrete $\nrm{ \, \cdot \, }_{H_h^1}$ and $\nrm{ \, \cdot \, }_{H_h^2}$ norms are needed in the later analysis: 
	\begin{eqnarray}
\nrm{ f }_{H_h^1}^2 := \nrm{ f }_2^2 + \nrm{ \nabla_h f }_2^2 , \quad 
\nrm{ f }_{H_h^2}^2 := \nrm{ f }_{H_h^1}^2  + \nrm{ \Delta_h f }_2^2 .
	\label{discrete H1 H2 norm} 
	\end{eqnarray}
	
Meanwhile, the discrete energy is defined as follows, for any $\phi \in {\cal V}_{\rm per}$:   
\begin{eqnarray} 
    E_h (\phi) = \varepsilon^{-1} ( \frac14 \| \phi \|_4^4 - \frac12 \| \phi \|_2^2 ) + \frac{\varepsilon}{2}  \| \nabla_{h,(4)} \phi \|_2^2 .  \label{discrete energy-0} 
\end{eqnarray} 	


The inequalities in the next lemma will play an important role in the energy stability and optimal rate convergence analysis; the proof has been provided in~\cite{cheng2019a}. 

\begin{lem}  \cite{cheng2019a} \label{lem: inequality} 
  We have 
\begin{equation}  
   \| \Delta_h f \|_2 \le \| \Delta_{h, (4)} f \|_2 \le \frac43 \| \Delta_h f \|_2 ,  \, \, \, 
   \| \nabla_h f \|_2 \le \| \nabla_{h, (4)} f \|_2 \le \frac{2}{\sqrt{3}} \| \nabla_h f \|_2 ,
   \quad \forall f \in {\cal V}_{\rm per} .  
   \label{inequality-0-2}    
\end{equation} 
\end{lem}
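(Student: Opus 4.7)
The plan is to reduce all four inequalities to pointwise comparisons of Fourier symbols and then conclude by Parseval's identity. Since $f \in {\cal V}_{\rm per}$ is $N$-periodic in each direction, I would expand $f$ in the discrete Fourier basis on the shifted grid $\{x_i = (i-\tfrac12)h\}$. The three-point second difference $D_x^2$ is diagonalized with eigenvalue $-\mu_x := -\tfrac{4}{h^2}\sin^2(\pi k_1 h/L)$; writing $s_x := \sin^2(\pi k_1 h/L) \in [0,1]$ gives $\mu_x = 4 s_x/h^2$, and likewise for $\mu_y, \mu_z$ and $s_y, s_z$. The factorization ${\cal D}_{x,(4)}^2 = D_x^2 (1 - \tfrac{h^2}{12} D_x^2)$ from \eqref{FD-4th-1-2} then shows that $-{\cal D}_{x,(4)}^2$ has symbol $\mu_x(1 + s_x/3)$, and by summation by parts together with the collapse of the symmetrized staggered sum in \eqref{gradient norm-1} under periodic shift, $\|D_x f\|_2^2$ carries the Fourier weight $\mu_x$.

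For the Laplacian inequalities, Parseval yields
\[
\|\Delta_h f\|_2^2 \propto \sum_{\vec k} |\hat f_{\vec k}|^2 (\mu_x + \mu_y + \mu_z)^2, \quad \|\Delta_{h,(4)} f\|_2^2 \propto \sum_{\vec k} |\hat f_{\vec k}|^2 \bigl(\mu_x(1+\tfrac{s_x}{3}) + \mu_y(1+\tfrac{s_y}{3}) + \mu_z(1+\tfrac{s_z}{3})\bigr)^2
\]
with a common normalization. Since $s_\alpha \in [0,1]$ implies $1 \le 1 + s_\alpha/3 \le 4/3$, the symbol of $-\Delta_{h,(4)}$ is pointwise in $\vec k$ between the symbol of $-\Delta_h$ and $\tfrac43$ times it; squaring and summing delivers $\|\Delta_h f\|_2 \le \|\Delta_{h,(4)} f\|_2 \le \tfrac43 \|\Delta_h f\|_2$.

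For the gradient inequalities, \eqref{energy stability-2} supplies $\|\nabla_{h,(4)} f\|_2 \ge \|\nabla_h f\|_2$ at once, the extra term $\tfrac{h^2}{12}\sum_\alpha \|D_\alpha^2 f\|_2^2$ being nonnegative. The upper bound $\|\nabla_{h,(4)} f\|_2^2 \le \tfrac43 \|\nabla_h f\|_2^2$ reduces, in Fourier space, to $\tfrac{h^2}{12}(\mu_x^2+\mu_y^2+\mu_z^2) \le \tfrac13(\mu_x+\mu_y+\mu_z)$, equivalently $s_x^2 + s_y^2 + s_z^2 \le s_x + s_y + s_z$ mode by mode, which is trivial from $s_\alpha \in [0,1]$. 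The only real bookkeeping subtlety is verifying that the symmetrized staggered inner product in \eqref{gradient norm-1} produces the same Fourier weight $\mu_x + \mu_y + \mu_z$ as the quadratic form $\langle f, -\Delta_h f\rangle$; once this alignment is in place, every step is an immediate pointwise consequence of $0 \le s_\alpha \le 1$, and no Gronwall or induction is required.
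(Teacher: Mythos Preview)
Your proof is correct. The paper itself does not supply a proof of this lemma, instead citing \cite{cheng2019a}; your Fourier-symbol argument is precisely the technique the paper deploys in Appendix~\ref{lemma 1-proof} and \ref{lemma 2-proof} for the neighboring Lemmas~\ref{lemma:1} and \ref{lemma:2}, where the same eigenvalues $\lambda_k = 4\sin^2(k\pi h/L)/h^2$ and $\lambda_k^{(4)} = \lambda_k + \tfrac{h^2}{12}\lambda_k^2$ appear (see \eqref{Lemma 1-5-3}). So your route is essentially the canonical one, and the reduction to the pointwise bounds $1 \le 1 + s_\alpha/3 \le 4/3$ and $s_\alpha^2 \le s_\alpha$ is exactly right.

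One small remark on the ``bookkeeping subtlety'' you flag: the alignment of the symmetrized staggered sum in \eqref{gradient norm-1} with the quadratic form $\langle f, -\Delta_h f\rangle$ is straightforward under periodicity, since the averaged half-grid sums $\tfrac12\sum_i\bigl((D_x f)_{i+1/2}^2 + (D_x f)_{i-1/2}^2\bigr)$ collapse to a single shifted sum; the paper implicitly uses this in \eqref{Lemma 1-2-0} and the Parseval computation that follows. You could cite the summation-by-parts identity \eqref{summation-1} directly to shortcut this, since it already encodes $\langle f, -\Delta_{h,(4)} f\rangle = \|\nabla_{h,(4)} f\|_2^2$ in the form \eqref{energy stability-2}.
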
 

\subsection{A few preliminary estimates} 

For simplicity of presentation, we assume $N_x = N_y = N_z =N = 2K+1$ is odd. The general case can be analyzed in a similar manner. 

For any grid function $f \in {\cal V}_{\rm per}$, its discrete Fourier transformation is given by 
	\begin{eqnarray}
f_{i,j,k} &=& \sum^{K}_{\ell,m,n=-K}
\hat{f}^N_{\ell,m,n} {\rm e}^{2 \pi i ( \ell x_{i} 
 + m y_{j} + n z_{k} )/ L }  ,
   \label{def:Fourier}
	\end{eqnarray}
where $x_{i} = (i - \frac12 ) h$, $y_{j} = ( j - \frac12) h$, $z_{k} = (k - \frac12) h$, and $\hat{f}^N_{\ell,m,n}$ are coefficients. Subsequently, we make its extension to a continuous function:
	\begin{equation}
	\label{def:extension}
f_{{\bf F}}(x,y,z) = \sum^{K}_{\ell,m,n=-K} 
 \hat{f}^N_{\ell,m,n} {\rm e}^{2 \pi i ( \ell x + m y + n z )/ L }  .
	\end{equation}
	
  The following preliminary estimates will play important role in the analysis in later sections. The proof of Lemma~\ref{lemma:0} has been provided in~\cite{guo16}, while that of Lemmas~\ref{lemma:1} and \ref{lemma:2} will be given in Appendix~\ref{lemma 1-proof}, \ref{lemma 2-proof}. Also see the 1-D analysis in~\cite{qiao17}, with the centered difference approximation.  
  
  	\begin{lem}  \cite{guo16} 
	\label{lemma:0}
Suppose that $f , \, \Delta_h f \in {\cal V}_{\rm per}$.  Then we have 
	\begin{align}
\nrm{f}_{H_h^2} \le&  C \Big( \nrm{f}_{H_h^1}^{\frac23} \nrm{ \Delta_h^2 f}_2^{\frac13} + \nrm{f}_{H_h^1} \Big) ,
	\label{sobolev-1} 
        \\
\nrm{f}_{\infty} \le&  C \Big( \nrm{ f }_{H_h^1}^{\frac56} \nrm{ \Delta_h^2 f }_2^{\frac16} + \nrm{f}_{H_h^1} \Big) ,  
	\label{discrete-gagl-niren-1}
	\\
\nrm{\nabla_h f}_{\infty} \le&  C \nrm{ f}_{H_h^1}^{\frac12} \nrm{ \Delta_h^2 f }_2^{\frac12}  ,
	\label{discrete-gagl-niren-2} 
	\end{align}
where $C>0$ is a costant independent of $h$, and $\bar{\phi}:= \frac{1}{|\Omega|} \langle\phi,1 \rangle$ is the discrete average.
	\end{lem}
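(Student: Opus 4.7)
The plan is to prove all three estimates simultaneously through the discrete Fourier transform, by reducing each discrete Gagliardo--Nirenberg-type inequality to its classical continuous counterpart applied to the trigonometric interpolant $f_{\bf F}$ defined in~\eqref{def:extension}. The key idea is that on the band-limited range $|\ell|, |m|, |n| \le K$, the discrete finite-difference symbols are comparable to the continuous differential symbols with constants independent of $h$.

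First I would establish a norm equivalence: for $f \in {\cal V}_{\rm per}$ with Fourier representation~\eqref{def:Fourier}, applying $D_x^2$ to the basis function ${\rm e}^{2\pi i \ell x/L}$ produces the eigenvalue $-\lambda_\ell^2$ with $\lambda_\ell = \tfrac{2}{h}\sin(\pi \ell h/L)$, while the continuous derivative produces $k_\ell = 2\pi \ell /L$. Since $\tfrac{2}{\pi} k_\ell \le \lambda_\ell \le k_\ell$ uniformly for $|\ell|\le K$, Parseval's identity applied on the grid and on $\Omega$ yields, for each non-negative integer $s$,
\begin{equation*}
 c_1 \, \nrm{f_{\bf F}}_{H^s(\Omega)} \;\le\; \nrm{f}_{H_h^s} + \nrm{(-\Delta_h)^{s/2} f}_{2} \;\le\; c_2 \, \nrm{f_{\bf F}}_{H^s(\Omega)}
\end{equation*}
with $c_1, c_2>0$ independent of $h$. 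In particular, $\nrm{f}_{H_h^1}$, $\nrm{\Delta_h f}_2$, and $\nrm{\Delta_h^2 f}_2$ are equivalent to $\nrm{f_{\bf F}}_{H^1}$, $\nrm{f_{\bf F}}_{H^2}$, and $\nrm{f_{\bf F}}_{H^4}$, respectively, on the space of trigonometric polynomials of degree at most $K$.

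Next, I would invoke the classical 3-D Gagliardo--Nirenberg inequalities for $f_{\bf F}\in H^4(\Omega)$ on the periodic cell:
\begin{align*}
 \nrm{f_{\bf F}}_{H^2} &\le C\bigl(\nrm{f_{\bf F}}_{H^1}^{2/3}\nrm{f_{\bf F}}_{H^4}^{1/3} + \nrm{f_{\bf F}}_{H^1}\bigr),\\
 \nrm{f_{\bf F}}_{L^\infty} &\le C\bigl(\nrm{f_{\bf F}}_{H^1}^{5/6}\nrm{f_{\bf F}}_{H^4}^{1/6} + \nrm{f_{\bf F}}_{H^1}\bigr),\\
 \nrm{\nabla f_{\bf F}}_{L^\infty} &\le C\,\nrm{f_{\bf F}}_{H^1}^{1/2}\nrm{f_{\bf F}}_{H^4}^{1/2},
\end{align*}
where the last one uses that in three space dimensions $\nrm{g}_{L^\infty} \lesssim \nrm{g}_{H^1}^{1/2} \nrm{g}_{H^2}^{1/2}$ applied to $g = \partial_{x_i} f_{\bf F}$. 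Finally, to transfer these back to grid quantities I would use that $f_{\bf F}(x_i,y_j,z_k) = f_{i,j,k}$ by construction, so $\nrm{f}_{\infty} \le \nrm{f_{\bf F}}_{L^\infty}$; for $\nrm{\nabla_h f}_\infty$ the centered-difference quotient $(D_x f)_{i+1/2,j,k} = h^{-1}(f_{\bf F}(x_{i+1},y_j,z_k) - f_{\bf F}(x_i,y_j,z_k))$ is bounded by $\nrm{\partial_x f_{\bf F}}_{L^\infty}$ via the mean value theorem, and similarly in $y,z$.

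The main technical obstacle is establishing the symbol comparison uniformly in $h$ cleanly enough that all three inequalities inherit the exact exponents claimed. In particular, the $L^\infty$ bounds on $f$ and $\nabla_h f$ rely on a sharp transfer between the pointwise grid values and continuous pointwise values of the trigonometric interpolant, which is genuine only because $f_{\bf F}$ is band-limited to frequencies $|\ell|\le K$; outside of this band one would lose the equivalence of symbols. Everything else amounts to collecting the standard Gagliardo--Nirenberg inequalities and unwinding the Fourier representations.
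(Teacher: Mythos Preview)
Your proposal is correct and follows exactly the Fourier--symbol comparison strategy that the paper uses throughout its appendices for the neighbouring Lemmas~\ref{lemma:1} and~\ref{lemma:2}; the paper itself does not prove Lemma~\ref{lemma:0} but simply cites~\cite{guo16}, where this same approach is taken. One minor remark: the Agmon step $\|g\|_{L^\infty}\lesssim\|g\|_{H^1}^{1/2}\|g\|_{H^2}^{1/2}$ applied to $g=\partial_{x_i}f_{\bf F}$ lands you at $\|f_{\bf F}\|_{H^2}^{1/2}\|f_{\bf F}\|_{H^3}^{1/2}$ rather than $\|f_{\bf F}\|_{H^1}^{1/2}\|f_{\bf F}\|_{H^4}^{1/2}$, but one more interpolation between $H^1$ and $H^4$ closes the gap.
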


	\begin{lem}
	\label{lemma:1}
Suppose that $f , \Dh^j f \in {\cal V}_{\rm per}$. Then we have 
	\begin{align} 
  D_{2j} \| \Delta^j f_{\bf F} \| 
   \le& \| \Delta_h^j f \|_2 \le \| \Delta^j f_{\bf F} \| ,  
    \quad \forall 0  \le j \le k ,   \label{lem-1-1} 
\\ 
   D_{2j +1} \| \nabla \Delta^j f_{\bf F} \| 
   \le& \| \nabla_h \Delta_h^j f \|_2 
   \le \| \nabla \Delta^j f_{\bf F} \| ,  \quad 
   \forall 0  \le j \le k ,   
   \label{lem-1-2}  
\\
    \nrm{ f_{\bf F} }_{H^{-1}} 
   \le& \nrm{ f }_{-1,h } 
   \le D_{-1} \nrm{ f_{\bf F} }_{H^{-1} } ,   \quad 
   \mbox{if $\overline{f} =0$} , 
    \label{lem-1-3}  
\\
  \| \Delta_{h, (4)}  f \|_2  
  \le& C_1 \| \Delta_{h, (4)}^2 f \|_2  , 
	\label{H2-est-6-7} 
\\  
   \nrm{ ( \Delta_{h, (4)} f)_{\bf F} }_{H^m} 
   \le& \nrm{ \Delta f_{\bf F} }_{H^m} ,  \quad 
   \forall 0 \le m \le 2k-1 ,   
   \label{lem-1-4}        
	\end{align}
where $D_{2j}, D_{2j+1}, D_{-1}>0$, $0 \le j \le k$, and $C_1 > 0$, are constants independent of $h$.   
	\end{lem}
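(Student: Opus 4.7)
The plan is to reduce each of the five inequalities of the lemma to a pointwise comparison of Fourier symbols, using Parseval's identity induced by the expansions (\ref{def:Fourier})--(\ref{def:extension}). Observing that both $\|f\|_2^2$ and $\|f_{\bf F}\|_{L^2}^2$ equal $|\Omega|\sum_{\ell,m,n}|\hat f^N_{\ell,m,n}|^2$, the discrete and continuous norms differ only in how each differential operator multiplies the Fourier coefficients mode by mode. Writing $\theta_\ell := \pi\ell/N$, and exploiting that $N = 2K+1$ forces $|\theta_\ell| \le \pi K/N < \pi/2$, the eigenvalues of the operators in play are: $\Delta_h \leftrightarrow -\tfrac{4}{h^2}\sum_i \sin^2\theta_i$; the long-stencil $\Delta_{h,(4)} \leftrightarrow -\tfrac{4}{h^2}\sum_i \sin^2\theta_i(1+\tfrac{1}{3}\sin^2\theta_i)$; the forward difference $D_x$ (of magnitude) $\leftrightarrow (2/h)|\sin\theta_\ell|$; and the continuous counterparts $\Delta \leftrightarrow -\tfrac{4\pi^2}{L^2}(\ell^2+m^2+n^2)$ and $\partial_x \leftrightarrow (2\pi/L)|\ell|$.

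For the first two inequalities of the lemma, I would apply mode by mode the elementary bound $\tfrac{2}{\pi}|\theta| \le |\sin\theta| \le |\theta|$ valid on $(-\pi/2,\pi/2)$, so that the ratio between the discrete and continuous $\Delta$-symbol at frequency $(\ell,m,n)$ lies in $[4/\pi^2,1]$. Raising to the $j$-th power and combining with the analogous gradient ratio gives the first two estimates with explicit constants $D_{2j} = (4/\pi^2)^j$ and $D_{2j+1} = (4/\pi^2)^{j+1/2}$. For the $H^{-1}$ statement, the mean-zero condition $\bar f = 0$ removes the zero mode, and the comparison of $1/|\lambda^{(4)}_{\ell,m,n}|$ with $1/|\lambda^{\rm cont}_{\ell,m,n}|$ yields both the lower bound (from $|\lambda^{(4)}| \le |\lambda^{\rm cont}|$, to be established below) and the upper bound (from the easy reverse estimate $|\lambda^{\rm cont}| \le (\pi^2/4)|\lambda^{(4)}|$ obtained via the same sine inequality). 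For the Poincaré-type inequality on $\Delta_{h,(4)}$, I would use that on the orthogonal complement of constants the smallest $|\lambda^{(4)}_{\ell,m,n}|$ is attained at the lowest nonzero frequency, where it is bounded below by an $h$-independent multiple of $L^{-2}$; Parseval then delivers the stated estimate with an explicit $C_1$.

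The principal obstacle is the sharp symbol domination $|\lambda^{(4)}_{\ell,m,n}| \le |\lambda^{\rm cont}_{\ell,m,n}|$ with constant exactly $1$, which underlies both the last inequality of the lemma and the lower half of the $H^{-1}$ estimate. After dividing through by $4/h^2$ and $4\pi^2/L^2$ respectively, the claim reduces (coordinate by coordinate) to the elementary but nontrivial inequality
\[
3\sin^2\theta + \sin^4\theta \le 3\theta^2, \qquad \theta \in [0,\pi/2].
\]
I would establish this by setting $g(\theta) := 3\theta^2 - 3\sin^2\theta - \sin^4\theta$, checking $g(0) = g'(0) = 0$, and computing directly that
\[
g''(\theta) = 6 - 6\cos(2\theta) - 12\sin^2\theta\cos^2\theta + 4\sin^4\theta = 12\sin^2\theta(1-\cos^2\theta) + 4\sin^4\theta = 16\sin^4\theta \ge 0,
\]
so that $g \ge 0$ throughout. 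Summing the coordinate-wise bound over the three directions then produces $|\lambda^{(4)}_{\ell,m,n}| \le |\lambda^{\rm cont}_{\ell,m,n}|$; multiplying by the $H^m$ weight $(1+(2\pi/L)^2(\ell^2+m^2+n^2))^m$ and summing over all modes completes the last inequality. The remaining details are bookkeeping with Parseval for the $\langle\cdot,\cdot\rangle$ and $\langle\cdot,\cdot\rangle_{-1,h}$ pairings, together with the minor adjustment needed to account for the half-integer grid on which $\|\nabla_h f\|_2$ is defined.
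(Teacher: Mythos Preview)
Your proposal is correct and follows essentially the same Fourier--Parseval symbol-comparison route as the paper: expand $f$ and $f_{\bf F}$ in discrete and continuous Fourier modes, identify the eigenvalues $\mu_\ell, \nu_\ell, \lambda^{(4)}_{\ell,m,n}, \Lambda_{\ell,m,n}$, and compare them pointwise using $\tfrac{2}{\pi}|\theta|\le|\sin\theta|\le|\theta|$ on $|\theta|<\pi/2$.

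Two minor differences are worth noting. First, for the sharp upper bound $\lambda^{(4)}_{\ell,m,n}\le \Lambda_{\ell,m,n}$ (equivalently $3\sin^2\theta+\sin^4\theta\le 3\theta^2$), the paper simply asserts that it follows from the sine comparison~(\ref{Lemma 1-2-3}), whereas you supply an explicit and clean verification via $g''(\theta)=16\sin^4\theta\ge 0$; your argument is the more complete one here, since the naive bound $\sin^2\theta\le\theta^2$ alone does not immediately control the extra $\sin^4\theta$ term. Second, for the Poincar\'e-type estimate~(\ref{H2-est-6-7}) the paper takes a detour through the continuous operator---chaining $\|\Delta_{h,(4)}f\|_2\le\tfrac43\|\Delta_h f\|_2\le\tfrac43\|\Delta f_{\bf F}\|\le\tfrac43 C_1^*\|\Delta^2 f_{\bf F}\|$ and then back via~(\ref{lem-1-1}) and~(\ref{inequality-0-2})---while your direct spectral-gap argument (bounding the smallest nonzero $|\lambda^{(4)}_{\ell,m,n}|$ from below by an $h$-independent constant) is shorter and yields an explicit $C_1$. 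Both routes are valid.
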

	
	\begin{lem}
	\label{lemma:2}
Suppose that $f \in {\cal V}_{\rm per}$. Then we have 
	\begin{equation} 
  0 \le \nrm{ \nabla f_{\bf F} }^2 - \| \nabla_{h, (4)} f \|_2^2   
   \le C h^4  \nrm{ f_{\bf F} }_{H^3}^2 ,  \label{lem-2-1}   
	\end{equation}
where $C$ is a constant independent of $h$.  
	\end{lem}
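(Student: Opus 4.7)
My plan is to diagonalize both $\|\nabla f_{\bf F}\|^2$ and $\|\nabla_{h,(4)} f\|_2^2$ in the discrete Fourier basis \eqref{def:Fourier}--\eqref{def:extension} and reduce the estimate to a scalar symbol comparison that is carried out mode by mode.

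\paragraph{Step 1.} First I would apply Parseval on both sides. Using the discrete orthogonality $\sum_{i=1}^N e^{2\pi i(\ell-\ell')x_i/L} = N\delta_{\ell,\ell'}$ (valid for $|\ell|,|\ell'|\le K$), both inner products become diagonal in the Fourier coefficients, and
\begin{equation*}
\|\nabla f_{\bf F}\|^2 = |\Omega| \sum_{\ell,m,n} \frac{(2\pi)^2}{L^2}(\ell^2+m^2+n^2)\,|\hat f^N_{\ell,m,n}|^2.
\end{equation*}
On the discrete side, $D_x^2 e^{2\pi i\ell x_i/L} = -\tfrac{4\sin^2\theta_\ell}{h^2} e^{2\pi i\ell x_i/L}$ with $\theta_\ell := \pi\ell h/L$, and the factorization ${\cal D}_{x,(4)}^2 = D_x^2(1-\tfrac{h^2}{12}D_x^2)$ endows $-{\cal D}_{x,(4)}^2$ with the eigenvalue $\tfrac{4\sin^2\theta_\ell}{h^2}(1+\tfrac{1}{3}\sin^2\theta_\ell)$. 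Summation by parts combined with Parseval then yields
\begin{equation*}
\|\nabla_{h,(4)} f\|_2^2 = |\Omega|\sum_{\ell,m,n}\frac{4}{h^2}\Big[\sin^2\theta_\ell\bigl(1+\tfrac{\sin^2\theta_\ell}{3}\bigr) + \sin^2\theta_m\bigl(1+\tfrac{\sin^2\theta_m}{3}\bigr) + \sin^2\theta_n\bigl(1+\tfrac{\sin^2\theta_n}{3}\bigr)\Big]|\hat f^N_{\ell,m,n}|^2.
\end{equation*}

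\paragraph{Step 2.} Next I would reduce to a scalar inequality. Subtracting the two representations, the entire claim reduces to the one-variable estimate
\begin{equation*}
0 \le g(\theta) := \theta^2 - \sin^2\theta - \tfrac{1}{3}\sin^4\theta \le C\,\theta^6, \qquad |\theta| \le \tfrac{\pi K}{N} < \tfrac{\pi}{2}.
\end{equation*}
Expanding $\sin^2\theta = \theta^2 - \tfrac{1}{3}\theta^4 + \tfrac{2}{45}\theta^6 + O(\theta^8)$ and $\tfrac{1}{3}\sin^4\theta = \tfrac{1}{3}\theta^4 - \tfrac{2}{9}\theta^6 + O(\theta^8)$ exhibits the cancellation of the $\theta^2$ and $\theta^4$ terms and leaves $g(\theta) = \tfrac{8}{45}\theta^6 + O(\theta^8)$; this delivers the upper bound $g(\theta)\le C\theta^6$ and positivity near $\theta=0$. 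For the pointwise sign $g\ge 0$ on all of $(0,\pi/2)$, I would differentiate and check that $g'(\theta) = 2\theta - \sin 2\theta - \tfrac{4}{3}\sin^3\theta\cos\theta \ge 0$ on $[0,\pi/2]$, or equivalently rewrite $g(\theta) = \theta^2 - s^2(1+\tfrac{s^2}{3})$ with $s=\sin\theta$ and verify that $\theta \ge s\sqrt{1+s^2/3}$ on $[0,\pi/2]$ by monotonicity together with values at the two endpoints.

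\paragraph{Step 3 and main obstacle.} Finally, combining Steps 1 and 2, using $\theta_d^6 = \pi^6 d^6 h^6/L^6$, and applying $\ell^6+m^6+n^6 \le (\ell^2+m^2+n^2)^3$, the mode-wise difference is bounded by $\tfrac{4C\pi^6 h^4}{L^6}(\ell^6+m^6+n^6)$; coupling this with the Fourier representation $\|f_{\bf F}\|_{H^3}^2 \ge C'(L)|\Omega|\sum(\ell^2+m^2+n^2)^3|\hat f^N_{\ell,m,n}|^2$ gives the desired $0 \le \|\nabla f_{\bf F}\|^2 - \|\nabla_{h,(4)} f\|_2^2 \le C h^4 \|f_{\bf F}\|_{H^3}^2$. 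The hard part will be the pointwise positivity $g(\theta)\ge 0$ on the full interval $(0,\pi/2)$: the leading Taylor coefficient is positive and the endpoint value $g(\pi/2) = \pi^2/4 - 4/3 > 0$ is easy to evaluate, but one must verify the corresponding monotonicity (or an equivalent) throughout so that no sign flip occurs in between; everything else is essentially bookkeeping of Parseval relations for the long-stencil symbol.
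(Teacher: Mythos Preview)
Your proposal is correct and follows essentially the same route as the paper: both diagonalize via Parseval in the discrete Fourier basis and reduce the claim to the one-variable symbol estimate $0\le \theta^2-\sin^2\theta\bigl(1+\tfrac13\sin^2\theta\bigr)\le C\theta^6$ on $[0,\pi/2)$, proved by Taylor expansion. The paper tracks explicit remainder constants in the expansion of $\sin\theta$ to obtain both the sign and the upper bound simultaneously, whereas your suggested monotonicity check (note $g''(\theta)=\tfrac{16}{3}\sin^4\theta\ge0$, hence $g'\ge g'(0)=0$) is a slightly cleaner way to secure $g\ge0$; the difference is purely cosmetic.
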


Moreover, to analyze the finite difference scheme over a uniform grid, we have to estimate the discrete nonlinear inner product. To achieve this, some tools in the Fourier pseudo-spectral analysis have to be used. Denote ${\cal B}^K$ as the space of trigonometric polynomials in $x$, $y$, and $z$ of degree up to $K$ (note that $N = 2K+1$). For a continuous function ${\mathbf f}$ with a Fourier series ${\mathbf f} (x,y,z) =  \sum_{l,m,n=-\infty}^{\infty}  \hat{\mathbf f}_{l,m,n} {\rm e}^{2 \pi {\rm i}  (l x + m y + n z) / L }$ , its projection onto the space ${\cal B}^K$ is the following truncated series  
\begin{eqnarray} 
  {\cal P}_N {\mathbf f} (x,y,z) =  \sum_{l,m,n=-K}^{K}  
 \hat{\mathbf f}_{l,m,n} {\rm e}^{2 \pi {\rm i} 
  (l x + m y + n z) / L } .   
\end{eqnarray} 
On the other hand, for ${\mathbf f}$ which may not be in ${\cal B}^K$, an interpolation operator is defined as 
\begin{equation} 
  {\cal I}_N {\mathbf f} (x,y,z) = f_{\bf F} (x,y,z), 
  \label{interpolation-1} 
\end{equation} 
in which $f$ is the discrete grid function created by the interpolation of ${\mathbf f}$: $f_{i,j,k} = {\mathbf f} (x_i, y_j, z_k)$, $0 \le i,j,k \le N-1$. Clearly ${\cal I}_N {\mathbf f} \ne {\mathbf f}$, due to the appearance of an aliasing error; their Fourier coefficients are different, unless ${\mathbf f} \in {\cal B}^K$. See the related references~\cite{Boyd2001, GO1977, HGG2007, tadmor86}, etc. On the other hand, a standard approximation analysis shows that, as long as ${\mathbf f}$ and all  derivatives (up to $m$-th order) are continuous and periodic on $\Omega$, the convergence of the derivatives of the projection and interpolation is given by 
\begin{eqnarray} 
  && 
  \| {\mathbf f} (x,y,z) 
  - {\cal P}_N {\mathbf f} (x,y,z) \|_{H^k}  
  \le  C h^{m-k} \| {\mathbf f} \|_{H^m} ,  \quad 
   0 \le k \le m ,   
   \label{spectral-approximation-1}  
\\
  &&
  \| {\mathbf f} (x,y,z) 
  - {\cal I}_N {\mathbf f} (x,y,z) \|_{H^k}  
  \le  C h^{m-k} \| {\mathbf f} \|_{H^m} , \quad 
   0 \le k \le m ,  \, m > \frac{d}{2} .  
   \label{spectral-approximation-2} 
\end{eqnarray} 
See the related discussion of approximation theory \cite{canuto82}; a similar aliasing error control result is also available in a more recent work~\cite{gottlieb12b}. 

We denote $\Phi ({\bf x},t)$ as the exact solution of the CH equation~\eqref{CH equation}, with a smooth initial data. In turn, the Fourier projection is taken as $\Phi_N ({\bf x},t) = {\cal P}_N \Phi ({\bf x},t)$. Of course, the following projection approximation is valid, for any $0 \le k \le m$: 
\begin{align} 
  & 
   \nrm{\Phi_N }_{L^\infty(0,T;H^k)}  
   \le \nrm{\Phi }_{L^\infty(0,T;H^k)} ,  \quad 
   \nrm{\Phi_N - \Phi}_{L^\infty(0,T;H^k)}  
   \le C h^{m-k} \nrm{\Phi }_{L^\infty(0,T;H^m)} , 
   \label{projection-est-1}  
\\
  &
  \| \partial_t^\ell \Phi_N \|_{L^\infty(0,T;H^k)}  
   \le \nrm{\partial_t^\ell \Phi }_{L^\infty(0,T;H^k)} ,  
   \quad  \forall \ell \ge 1 , \label{projection-est-3} 
\\
  &
  \| \partial_t^\ell ( \Phi_N - \Phi ) \|_{L^\infty(0,T;H^k)} 
   \le C h^{m-k} \| \partial_t^\ell \Phi \|_{L^\infty(0,T;H^m)},  
   \quad  \forall \ell \ge 1 . \label{projection-est-4} 
\end{align} 

The following result plays a very important role in the nonlinear inner product analysis; its proof will be given in Appendix~\ref{lemma 3-proof}. The proof in the 1-D version has been provided in~\cite{qiao17}.

	\begin{lem}
	\label{lemma:3}
Suppose $ {\mathbf f}$ and ${\mathbf g}$ are continuous and periodic functions, and their discrete interpolation grid versions are given by $f$, $g$, respectively. 

(1) If $ {\mathbf f} , {\mathbf g} \in {\cal B}^K$, we have 
\begin{equation} 
  \left\langle f , g \right\rangle 
  = \left( {\mathbf f} , {\mathbf g}  \right) . 
  \label{lem-3-1} 
\end{equation} 

(2) In general, the following estimate is valid: 
\begin{equation} 
  \left| \left\langle f , g \right\rangle 
  - \left( {\mathbf f} , {\mathbf g}  \right)  \right| 
  \le C h^8 \left( 
  \nrm{ {\mathbf f} }_{H^8}  \cdot \nrm{ {\mathbf g} }_{H^2}
   + \nrm{ {\mathbf f} }_{H^2}  
   \cdot \nrm{ {\mathbf g} }_{H^8}  \right) ,   
  \label{lem-3-2} 
\end{equation} 
in which $C$ is a constant independent of $h$.  
	\end{lem}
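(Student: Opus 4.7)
The plan for part (1) is to expand $\mathbf{f},\mathbf{g}\in\mathcal{B}^K$ as finite Fourier series with coefficients $\hat{\mathbf{f}}_{l,m,n}, \hat{\mathbf{g}}_{l,m,n}$, and substitute them directly into $\langle f,g\rangle = h^3 \sum_{i,j,k=1}^N f_{i,j,k} g_{i,j,k}$. The key is the discrete orthogonality relation $\sum_{i=1}^N e^{2\pi\mathrm{i}(l_1-l_2)x_i/L} = N\,\delta_{l_1,l_2}$, valid whenever $|l_1-l_2|\le 2K = N-1 < N$, applied in each coordinate direction; combined with $hN = L$ this collapses the resulting sextuple sum onto the diagonal and produces $L^3 \sum \hat{\mathbf{f}}_{l,m,n} \overline{\hat{\mathbf{g}}_{l,m,n}}$, which equals $(\mathbf{f},\mathbf{g})$ by Parseval.

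For part (2), I would first apply part (1) to the band-limited extensions $f_{\mathbf{F}}, g_{\mathbf{F}} \in \mathcal{B}^K$ to obtain $\langle f, g\rangle = (f_{\mathbf{F}}, g_{\mathbf{F}})$, so the task reduces to estimating $|(\mathbf{f},\mathbf{g}) - (f_{\mathbf{F}}, g_{\mathbf{F}})|$. Decomposing $\mathbf{f} = \mathcal{P}_N \mathbf{f} + \mathbf{f}^H$ with $\mathbf{f}^H := \mathbf{f} - \mathcal{P}_N \mathbf{f}$ (and similarly for $\mathbf{g}$), and noting that $\mathcal{I}_N \mathcal{P}_N \mathbf{f} = \mathcal{P}_N \mathbf{f}$ since $\mathcal{P}_N \mathbf{f}\in \mathcal{B}^K$, the interpolant splits as $f_{\mathbf{F}} = \mathcal{P}_N \mathbf{f} + \mathcal{I}_N \mathbf{f}^H$. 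Using the $L^2$-orthogonality between $\mathcal{B}^K$ and its high-frequency complement to kill the cross terms in $(\mathbf{f},\mathbf{g})$, the difference reduces to
\[
(\mathbf{f},\mathbf{g}) - (f_{\mathbf{F}}, g_{\mathbf{F}})
 = (\mathbf{f}^H,\mathbf{g}^H) - (\mathcal{P}_N \mathbf{f}, \mathcal{I}_N \mathbf{g}^H) - (\mathcal{I}_N \mathbf{f}^H, \mathcal{P}_N \mathbf{g}) - (\mathcal{I}_N \mathbf{f}^H, \mathcal{I}_N \mathbf{g}^H),
\]
which will be bounded term-by-term.

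The heart of the argument is the aliasing bound $\|\mathcal{I}_N \mathbf{f}^H\|_{L^2} \le C h^s \|\mathbf{f}\|_{H^s}$ for $s$ sufficiently large, which parallels the standard projection estimate $\|\mathbf{f}^H\|_{L^2} \le C h^s \|\mathbf{f}\|_{H^s}$ from \eqref{spectral-approximation-1}. To derive it, I would write the Fourier coefficients of $\mathcal{I}_N \mathbf{f}^H$ as the folded sum $\sum_{(p,q,r)\ne 0} \hat{\mathbf{f}}_{l+pN,\,m+qN,\,n+rN}$ and apply Cauchy-Schwarz with the weight $((l+pN)^2+(m+qN)^2+(n+rN)^2)^{-s}$; the constraint $(p,q,r)\ne 0$ together with $|l|,|m|,|n|\le K$ forces one of $|l+pN|, |m+qN|, |n+rN|$ to be at least $N/2 \sim h^{-1}$, so the weight sum produces an overall $h^{2s}$, while the residual factor is controlled by $\|\mathbf{f}\|_{H^s}^2$ after summing over $(l,m,n)$. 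With this tool in hand, each of the four terms in the decomposition is bounded by Cauchy-Schwarz with the splitting $s=8$ applied to the factor involving $\mathbf{f}$ and $s=2$ applied to the partner (and vice versa), producing the desired $C h^8 (\|\mathbf{f}\|_{H^8}\|\mathbf{g}\|_{H^2} + \|\mathbf{f}\|_{H^2}\|\mathbf{g}\|_{H^8})$; the binding contributions are the cross inner products $(\mathcal{P}_N \mathbf{f}, \mathcal{I}_N \mathbf{g}^H)$ and $(\mathcal{I}_N \mathbf{f}^H, \mathcal{P}_N \mathbf{g})$ at exact order $h^8$, while the diagonal terms $(\mathbf{f}^H,\mathbf{g}^H)$ and $(\mathcal{I}_N \mathbf{f}^H, \mathcal{I}_N \mathbf{g}^H)$ even gain $h^{10}$.

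The main obstacle is the three-dimensional aliasing bound itself. The 1-D analogue in~\cite{qiao17} only has to track a single shift $l+pN$, but here one must verify that the multi-index zeta-type sum $\sum_{(p,q,r)\ne 0} (p^2+q^2+r^2)^{-s}$ converges uniformly and produces a clean $h^{2s}$ factor independent of $(l,m,n)$ with $|l|,|m|,|n|\le K$. I would handle this by bounding $((l+pN)^2 + (m+qN)^2 + (n+rN)^2)$ from below by $c N^2 (p^2+q^2+r^2)$ for $(p,q,r)\ne 0$, reducing matters to a convergent lattice sum (for $2s > 3$) whose value is independent of the grid spacing.
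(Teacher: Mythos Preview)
Your proposal is correct, and for part~(1) it is essentially the same as the paper's argument: the paper phrases the discrete orthogonality as ``no aliasing on the zero mode of ${\mathbf f}\cdot{\mathbf g}\in{\cal B}^{2K}$,'' which is exactly what your identity $\sum_{i=1}^N e^{2\pi\mathrm{i}(l_1-l_2)x_i/L}=N\delta_{l_1,l_2}$ for $|l_1-l_2|\le 2K<N$ encodes.

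For part~(2), however, you take a noticeably longer route than the paper. The paper simply writes
\[
\langle f,g\rangle-({\mathbf f},{\mathbf g})=({\cal I}_N{\mathbf f},{\cal I}_N{\mathbf g})-({\mathbf f},{\mathbf g})
=({\cal I}_N{\mathbf f}-{\mathbf f},\,{\cal I}_N{\mathbf g})+({\mathbf f},\,{\cal I}_N{\mathbf g}-{\mathbf g}),
\]
applies Cauchy--Schwarz to each piece, and then invokes the already-stated spectral interpolation bound~\eqref{spectral-approximation-2} directly: $\|{\cal I}_N{\mathbf f}-{\mathbf f}\|\le Ch^8\|{\mathbf f}\|_{H^8}$ and $\|{\cal I}_N{\mathbf g}\|\le C\|{\mathbf g}\|_{H^2}$ (the latter from $\|{\cal I}_N{\mathbf g}-{\mathbf g}\|\le Ch^2\|{\mathbf g}\|_{H^2}$). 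This gives the result in two lines. Your four-term decomposition via ${\mathbf f}={\cal P}_N{\mathbf f}+{\mathbf f}^H$ and your from-scratch derivation of the aliasing estimate $\|{\cal I}_N{\mathbf f}^H\|\le Ch^s\|{\mathbf f}\|_{H^s}$ through the folded Fourier sum are both valid, but they re-prove a special case of~\eqref{spectral-approximation-2} that the paper treats as given. What your approach buys is self-containment and a sharper accounting of which terms are genuinely $O(h^8)$ versus $O(h^{10})$; what the paper's approach buys is brevity, since it leverages the standard interpolation theory rather than unpacking it.
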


\subsection{The fully discrete numerical scheme and the energy stability} 

A modified BDF2 temporal discretization is applied to the Cahn-Hilliard equation, combined with long stencil fourth order difference spatial approximation in space:  
 \begin{align} 
 \frac{\frac32 \phi^{n+1}- 2 \phi^n + \frac12 \phi^{n-1}}{\dt} =& \Delta_{h,(4)}  \mu^{n+1} , \nonumber 
 \\
   \mu^{n+1} = &  
   \varepsilon^{-1} ( (\phi^{n+1})^3 - 2 \phi^n + \phi^{n-1} )    
   - \varepsilon \Delta_{h,(4)}\phi^{n+1}   \nonumber 
\\
  & 
   - A \varepsilon^{-2} \dt \Delta_{h,(4)} ( \phi^{n+1} - \phi^n ) .  
 \label{scheme-BDF-CH-1}
 \end{align}


In terms of the convergence analysis, we compare the numerical solution with the projection solution $\Phi_N$, instead of the exact solution $\Phi$, to simplify the discrete $H^{-1}$ error estimate. By $\Phi_N^m$ we denote $\Phi_N(\, \cdot \, , t^m)$, with $t^m = m\cdot \dt$. Since $\Phi_N \in {\cal B}^K$, the mass conservative property is available at the discrete level: 
	\begin{equation} 
\overline{\Phi_N^m} = \frac{1}{|\Omega|}\int_\Omega \, \Phi_N ( \cdot, t_m) \, d {\bf x} = \frac{1}{|\Omega|}\int_\Omega \, \Phi_N ( \cdot, t_{m-1}) \, d {\bf x} = \overline{\Phi_N^{m-1}} ,  \quad \forall \ m \in\mathbb{N}.  
	\label{mass conserv-1} 
	\end{equation} 
On the other hand, the solution of the proposed scheme~\eqref{scheme-BDF-CH-1} is also mass conservative at the discrete level: 
	\begin{equation} 
\overline{\phi^m} = \overline{\phi^{m-1}} ,  \quad \forall \ m \in \mathbb{N} .  
	\label{mass conserv-2} 
	\end{equation} 
Meanwhile, we use the mass conservative projection for the initial data:  
	\begin{equation}
\phi^0_{i,j,k} = \Phi_N (x_i, y_j, z_k, t=0) . 
	\label{initial data-0}
	\end{equation}	
The error grid function is defined as 
	\begin{equation} 
e^k := \Phi_N^k - \phi^k ,  \quad 
 \forall \ k \in\mathbb{N} .   
	\label{error function-1}
	\end{equation} 
Therefore, it follows that  $\overline{e^k} =0$, for any $k \in \mathbb{N}$, so that the discrete norm $\nrm{ \, \cdot \, }_{-1, h}$ is well defined for the error grid function. 

Since the proposed numerical method~\eqref{scheme-BDF-CH-1} is a two-step algorithm, a ``ghost" point extrapolation for $\phi^{-1}$ is needed to preserve the second order accuracy in time. As outlined above, the initial value is taken as $\phi^0_{i,j,k} = \Phi_N (x_i, y_j, z_k, t=0)$. At the ``ghost" time instant $t^{-1}$, the following approximation is taken: 
\begin{equation} 
  \phi^{-1} = \phi^0 - \dt \Delta_{h,(4)} \mu_h^0, \quad \mbox{with} \, \, \mu_h^0 := \varepsilon^{-1} ( (\phi^0)^3 - \phi^0 ) - \varepsilon \Delta_{h,(4)}\phi^0 .  \label{scheme-BDF-CH-initial-1}
\end{equation}
Of course, the Taylor expansion implies an $O (\dt^2 + h^4)$ accuracy for this approximation: 
\begin{eqnarray} 
  \| \phi^{-1} - \Phi^{-1} \|_2 \le C (\dt^2 + h^4). \label{scheme-BDF-CH-initial-2}
\end{eqnarray}  

The unique solvability, energy stability, as well as a discrete $H^1$ stability of the numerical solution, have been proved in~\cite{cheng2019a}. 

\begin{thm} \cite{cheng2019a} \label{CH solvability} 
  Given $\phi^k, \phi^{k-1} \in {\mathcal V}_{\rm per}$, with $\overline{\phi^k} = \overline{\phi^{k-1}}$, there exists a unique solution $\phi^{k+1} \in {\mathcal V}_{\rm per}$ for the numerical scheme~\eqref{scheme-BDF-CH-1}. And also, this scheme is mass conservative, i.e., $\overline{\phi^k} \equiv \overline{\phi^0} := \beta_0$, for any $k \ge 0$. For $k \ge 1$, we introduce
	\begin{equation}
\mathcal{E}_h (\phi^{k+1}, \phi^k) := E_h ( \phi^{k+1})+\frac{1}{4\dt} \| \phi^{k+1}- \phi^k\|_{-1,h}^2 + \frac{1}{2 \varepsilon} \| \phi^{k+1} - \phi^k \|_2^2 . 
	\label{discrete energy}
	\end{equation}
For $A \ge \frac{1}{16}$, a modified energy-decay property is available for the numerical scheme~\eqref{scheme-BDF-CH-1}: 
	\begin{equation}
\mathcal{E}_h ( \phi^{k+1}, \phi^k) 
\le \mathcal{E}_h ( \phi^k, \phi^{k-1}) .
	\label{CH-eng stab-est}
	\end{equation}
Furthermore, suppose that the initial data are sufficiently regular so that
	\[
E_h (\phi^0) + \frac{\dt}{4} \| \nabla_{h, (4)} \mu_h^0 \|_2^2 + \frac{\dt^2}{2} \| \Delta_{h, (4)} \mu_h^0 \|_2^2 \le \tilde{C}_0,
	\]
for some $\tilde{C}_0$ that is independent of $h$, and $A\ge \frac{1}{16}$. Then we have the following uniform (in time) $H_h^1$ bound for the numerical solution: 
	\begin{equation}
\nrm{ \phi^m}_{H_h^1} \le \tilde{C}_{1, \varepsilon} = O (\varepsilon^{-1}) ,  \quad \forall m \ge 1 , \label{CH-H1 stab-0}
	\end{equation} 
in which $\tilde{C}_{1, \varepsilon}$ only depends on $\Omega$, $\varepsilon$ and $\tilde{C}_0$, independent on $h$, $\dt$ and final time. In more detail, its dependence on $\varepsilon^{-1}$ is in a polynomial form, namely $O (\varepsilon^{-1})$.   
	\end{thm}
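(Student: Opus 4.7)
The plan is to break the proof into four parts mirroring the four assertions of the theorem: unique solvability, mass conservation, the modified energy inequality, and the uniform $H_h^1$ bound. I expect the energy-stability step to be the main obstacle because the extrapolated concave term $-\varepsilon^{-1}(2\phi^n-\phi^{n-1})$ destroys the convex-splitting structure and its numerical error must be absorbed by the Douglas-Dupont regularization precisely calibrated by the constant $A\ge\frac{1}{16}$.

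First I would establish unique solvability via a variational argument. Observing that $\tfrac{3}{2}\phi^{n+1}-2\phi^n+\tfrac{1}{2}\phi^{n-1}$ has mean zero, I would show that $\phi^{n+1}$ is the unique minimizer, on the affine subspace $\{\phi\in{\cal V}_{\rm per}:\overline{\phi}=\beta_0\}$, of the functional
\[
J(\phi)=\frac{1}{2\dt}\bigl\|\tfrac{3}{2}\phi-2\phi^n+\tfrac{1}{2}\phi^{n-1}\bigr\|_{-1,h}^2+\frac{\varepsilon^{-1}}{4}\|\phi\|_4^4+\frac{\varepsilon}{2}\|\nabla_{h,(4)}\phi\|_2^2+\frac{A\varepsilon^{-2}\dt}{2}\|\nabla_{h,(4)}(\phi-\phi^n)\|_2^2+\langle\phi,\ell^n\rangle,
\]
where $\ell^n$ lumps the explicit $-\varepsilon^{-1}(2\phi^n-\phi^{n-1})$ contribution. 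Strict convexity of $J$ (all quadratic terms are nonnegative, the quartic is strictly convex, and the $H^{-1}$-metric piece is positive-definite on mean-zero perturbations) gives existence and uniqueness. Mass conservation then follows at once from the identity $\langle\Delta_{h,(4)}\mu^{n+1},1\rangle=0$ combined with an induction on $n$.

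For the modified energy inequality, I would take the discrete inner product of the first equation of \eqref{scheme-BDF-CH-1} with $\dt\mu^{n+1}$ to obtain
\[
\bigl\langle \tfrac{3}{2}\phi^{n+1}-2\phi^n+\tfrac{1}{2}\phi^{n-1},\mu^{n+1}\bigr\rangle=-\dt\|\nabla_{h,(4)}\mu^{n+1}\|_2^2\le 0,
\]
and then use the Dahlquist-style decomposition
\[
\tfrac{3}{2}\phi^{n+1}-2\phi^n+\tfrac{1}{2}\phi^{n-1}=(\phi^{n+1}-\phi^n)+\tfrac{1}{2}\bigl((\phi^{n+1}-\phi^n)-(\phi^n-\phi^{n-1})\bigr).
\]
Inserting the explicit form of $\mu^{n+1}$ and using summation by parts, the cubic term yields $\varepsilon^{-1}\cdot\tfrac{1}{4}(\|\phi^{n+1}\|_4^4-\|\phi^n\|_4^4)$ plus nonnegative remainders by convexity of $x\mapsto x^4$, while the surface-diffusion term produces the telescoping $\tfrac{\varepsilon}{2}(\|\nabla_{h,(4)}\phi^{n+1}\|_2^2-\|\nabla_{h,(4)}\phi^n\|_2^2)$. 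The concave extrapolation $-\varepsilon^{-1}(2\phi^n-\phi^{n-1})$ is the delicate piece: I would rewrite $2\phi^n-\phi^{n-1}=\phi^{n+1}-(\phi^{n+1}-2\phi^n+\phi^{n-1})$, which converts the bad term into the proper $-\varepsilon^{-1}\|\phi^{n+1}\|_2^2$ contribution plus a cross term whose magnitude I would bound, using the discrete $\|\cdot\|_{-1,h}$ norm and Cauchy-Schwarz, by a multiple of $\|\phi^{n+1}-2\phi^n+\phi^{n-1}\|_{-1,h}^2$ and $\|\phi^{n+1}-\phi^n\|_2^2$. The Douglas-Dupont term $-A\varepsilon^{-2}\dt\Delta_{h,(4)}(\phi^{n+1}-\phi^n)$ supplies precisely $\tfrac{A\varepsilon^{-2}\dt}{2}$ times positive telescoping plus $\tfrac{A\varepsilon^{-2}\dt}{2}\|\nabla_{h,(4)}(\phi^{n+1}-\phi^n)\|_2^2$; combining with \eqref{summation-2}, this dominates the bad cross terms provided $A\ge\frac{1}{16}$. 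Collecting, I arrive at $\mathcal{E}_h(\phi^{n+1},\phi^n)\le\mathcal{E}_h(\phi^n,\phi^{n-1})$.

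Finally, for the uniform $H_h^1$ bound I would telescope the modified energy to get $E_h(\phi^m)\le\mathcal{E}_h(\phi^1,\phi^0)\le\tilde C_0$, then exploit the elementary estimate $\tfrac{1}{4}x^4-\tfrac{1}{2}x^2\ge -\tfrac{1}{4}$ pointwise to conclude $\|\nabla_{h,(4)}\phi^m\|_2^2\le \tfrac{2}{\varepsilon}(\tilde C_0+|\Omega|\varepsilon^{-1}/4)=O(\varepsilon^{-2})$, hence $\|\nabla_h\phi^m\|_2=O(\varepsilon^{-1})$ by Lemma~\ref{lem: inequality}. The $L^2$ bound follows from mass conservation $\overline{\phi^m}=\beta_0$ together with a discrete Poincaré inequality applied to $\phi^m-\beta_0$, yielding $\|\phi^m\|_{H_h^1}\le\tilde C_{1,\varepsilon}=O(\varepsilon^{-1})$ with polynomial dependence on $\varepsilon^{-1}$ and independence of both $h$ and the final time. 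The hardest bookkeeping step is the absorption argument in the energy estimate, where the constant $A\ge 1/16$ comes from requiring the regularization dissipation to control the worst cross term coming from the concave extrapolation.
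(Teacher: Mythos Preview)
The paper does not actually prove this theorem; it is quoted verbatim from \cite{cheng2019a}. Your overall architecture (variational solvability, mass conservation, energy inequality, then the $H_h^1$ bound via the pointwise lower bound $\tfrac14 x^4-\tfrac12 x^2\ge -\tfrac14$ and discrete Poincar\'e) is exactly what is done there. There is, however, a genuine gap in your energy-stability step.

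Testing the first equation with $\mu^{n+1}$ and then inserting the Dahlquist decomposition does not close. After splitting $\tfrac32\phi^{n+1}-2\phi^n+\tfrac12\phi^{n-1}=p+\tfrac12(p-q)$ with $p=\phi^{n+1}-\phi^n$, $q=\phi^n-\phi^{n-1}$, you face the cross term $\tfrac12\varepsilon^{-1}\langle(\phi^{n+1})^3,\,p-q\rangle$, which has no sign and does not telescope; your claim of ``nonnegative remainders by convexity of $x\mapsto x^4$'' applies only to the pairing with $p$, not with $p-q$. The same problem appears for the surface-diffusion and Douglas--Dupont pieces paired with $\tfrac12(p-q)$.

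The fix is to change the test function: apply $(-\Delta_{h,(4)})^{-1}$ to the scheme and take the inner product with $\phi^{n+1}-\phi^n$. Then the BDF2 temporal term gives, in the $\|\cdot\|_{-1,h}$ norm,
\[
\Bigl\langle \tfrac32\phi^{n+1}-2\phi^n+\tfrac12\phi^{n-1},\,\phi^{n+1}-\phi^n\Bigr\rangle_{-1,h}\ \ge\ \tfrac14\bigl(\|p\|_{-1,h}^2-\|q\|_{-1,h}^2\bigr)+\|p\|_{-1,h}^2,
\]
while $\langle\mu^{n+1},\phi^{n+1}-\phi^n\rangle$ splits cleanly: convexity handles the cubic, the surface-diffusion term telescopes, and the Douglas--Dupont term equals $A\varepsilon^{-2}\dt\|\nabla_{h,(4)}p\|_2^2$. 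Your rewriting $2\phi^n-\phi^{n-1}=\phi^{n+1}-(p-q)$ for the concave piece is correct and produces $-\tfrac{1}{2\varepsilon}(\|\phi^{n+1}\|_2^2-\|\phi^n\|_2^2)-\tfrac{1}{2\varepsilon}\|q\|_2^2$. The extra $\tfrac{1}{2\varepsilon}\|p\|_2^2$ needed in $\mathcal{E}_h$ is supplied by the interpolation inequality $\|p\|_2^2\le\|p\|_{-1,h}\|\nabla_{h,(4)}p\|_2$ and Young's inequality,
\[
\tfrac{1}{2\varepsilon}\|p\|_2^2\ \le\ \tfrac{1}{\dt}\|p\|_{-1,h}^2+\tfrac{\dt}{16\varepsilon^2}\|\nabla_{h,(4)}p\|_2^2,
\]
which is absorbed exactly when $A\ge\tfrac{1}{16}$. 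With this single change of test function the rest of your outline goes through unchanged.
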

	
As a consequence of Theorem~\ref{CH solvability} and the estimate (\ref{lem-1-2}), combined with the definition $\nrm{ \phi_{\bf F} }_{H^1}^2 = \nrm{ \phi_{\bf F} }^2 + \nrm{ \nabla \phi_{\bf F} }^2$, the following result is obvious. 
	
	\begin{cor}  \label{stab-H1-2}
We have 
	\begin{align}
\nrm{\phi_{\bf F} }_{\ell^\infty(0,T;H^1)} :=& 
\max_{0\le m\le M}\nrm{\phi^m_{\bf F} }_{H^1} 
 \le \hat{C}_{1,\varepsilon} 
  := C \varepsilon^{-1}  ,
	\label{LinftyH1-stability-bound-2}
	\end{align}
in which $C$ is a constant independent of $h$, $\dt$, $T$, and $\varepsilon$. 
	\end{cor}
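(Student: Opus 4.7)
The plan is to extract the bound directly from Theorem~\ref{CH solvability} and then transfer it from the discrete $H_h^1$ norm to the $H^1$ norm of the Fourier extension via Lemma~\ref{lemma:1}. The argument is essentially a two-line chain of inequalities, but I want to lay out the steps carefully.

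First, I would observe that by the definition~\eqref{discrete H1 H2 norm}, $\nrm{\phi^m}_{H_h^1}^2 = \nrm{\phi^m}_2^2 + \nrm{\nabla_h \phi^m}_2^2$, and Theorem~\ref{CH solvability} supplies the bound $\nrm{\phi^m}_{H_h^1} \le \tilde{C}_{1,\varepsilon}$, which is of order $O(\varepsilon^{-1})$, uniformly in $m \ge 1$. For the initial step $m=0$, the regularity assumption placed on $\phi^0$ (together with the identity $\phi^0 = {\cal P}_N \Phi(\cdot,0)$ evaluated on the grid) gives the same polynomial-in-$\varepsilon^{-1}$ bound on $\nrm{\phi^0}_{H_h^1}$, so the estimate holds uniformly for $0 \le m \le M$.

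Next, I would apply Lemma~\ref{lemma:1}. Specifically, estimate~\eqref{lem-1-1} with $j=0$ gives $\nrm{\phi^m_{\bf F}} \le D_0^{-1} \nrm{\phi^m}_2$, and estimate~\eqref{lem-1-2} with $j=0$ gives $\nrm{\nabla \phi^m_{\bf F}} \le D_1^{-1} \nrm{\nabla_h \phi^m}_2$. Combining these with the definition $\nrm{\phi_{\bf F}}_{H^1}^2 = \nrm{\phi_{\bf F}}^2 + \nrm{\nabla \phi_{\bf F}}^2$, we obtain
\[
  \nrm{\phi^m_{\bf F}}_{H^1}^2 \le \max(D_0^{-2},D_1^{-2}) \, \nrm{\phi^m}_{H_h^1}^2 \le C \tilde{C}_{1,\varepsilon}^2 ,
\]
where $C$ depends only on the mesh-independent constants $D_0, D_1$. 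Taking square roots and maximizing over $0 \le m \le M$ then yields $\nrm{\phi_{\bf F}}_{\ell^\infty(0,T;H^1)} \le \hat{C}_{1,\varepsilon} = O(\varepsilon^{-1})$, with the constant independent of $h$, $\dt$, and $T$.

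There is really no obstacle here, because Theorem~\ref{CH solvability} already does the heavy lifting (uniform-in-time energy stability plus polynomial-in-$\varepsilon^{-1}$ control), and Lemma~\ref{lemma:1} provides the bridge between the discrete norms and the continuous norms of the Fourier extension with $h$-independent constants. The only mild subtlety worth flagging is that the constants $D_0$ and $D_1$ arise from comparing the symbols of the centered difference operators $\tilde{D}_x, D_x^2$ against the true Fourier symbols $2\pi i \ell/L$ and $-(2\pi\ell/L)^2$; they are strictly positive and independent of $h$ because the grid-indices $\ell,m,n$ remain in the bounded range $-K \le \ell,m,n \le K$ with $N=2K+1$, so the ratio $\sin\theta/\theta$ is bounded away from zero on the relevant range. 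This keeps the transition from $H_h^1$ to $H^1$ lossless in terms of $\varepsilon$-dependence, which is exactly what subsequent sections will need.
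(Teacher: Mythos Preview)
Your proposal is correct and follows essentially the same approach as the paper: invoke the uniform discrete $H_h^1$ bound from Theorem~\ref{CH solvability} and then pass to the continuous $H^1$ norm of the Fourier extension via the norm-comparison estimates of Lemma~\ref{lemma:1} (the paper cites only \eqref{lem-1-2}, since by Parseval $\nrm{\phi^m}_2 = \nrm{\phi^m_{\bf F}}$ and the $L^2$ part is automatic). Your additional remark on the $m=0$ case and the explanation of why $D_0,D_1$ are $h$-independent are harmless elaborations of the same argument.
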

	
\subsection{Statement of the main theorem} 

The error estimate for the numerical scheme~\eqref{scheme-BDF-CH-1} has been reported in~\cite{cheng2019a}, with second order accuracy in time and fourth order accuracy in space. However, a standard error estimate using the discrete Gronwall inequality has always contained a convergence constant singularly dependent on $\varepsilon^{-1}$ in an exponential form. 

A refined error estimate is provided in this article. The following theorem is the main theoretical result. 

	\begin{thm}
	\label{thm:convergence}
Suppose that the initial data $\phi_0$ is sufficiently smooth and that $\dt$ and $h$ satisfy the scaling laws 
	\begin{equation}
  \dt  \le \hat{C} \varepsilon^{J_1} ,  \quad h \le \hat{C} \varepsilon^{J_2} ,
	\label{convergence-condition-1}
	\end{equation}
where $J_1$ and $J_2$ are positive integers that are sufficiently large, and $\hat{C}$ is a constant for a fixed final time $T$. Also assume that $0 < \varepsilon < \varepsilon_0$, with $\varepsilon_0$ specified in Proposition \ref{Feng04NM}. Then the following convergence result is valid:
	\begin{equation}
 \max_{1 \le m \le M} \| e^m \|_{-1, h} 
  \le \hat{R}^* ( \dt^2 + h^4 ) , \quad  
   \mbox{with} \, \, M = \left[ \frac{T}{\dt} \right] , \, \, 
   \hat{R}^* = C e^{C_0^* T} \varepsilon^{- J_0} , 
	\label{convergence-1}
	\end{equation}
where $J_0$ is a positive integer, $C_0^*$ and $C$ are positive constants that are independent of $\dt$, $h$ and $\varepsilon$. 
	\end{thm}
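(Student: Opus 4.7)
The plan is to derive an error evolution equation at the time instant $t^{n+1}$, test it in the discrete $(-\Delta_{h,(4)})^{-1}$ inner product against $e^{n+1}$, and close the resulting estimate via a spectrum argument so that the convergence constant depends on $\varepsilon^{-1}$ only polynomially. Substituting the Fourier projection $\Phi_N^{n+1}$ into~\eqref{scheme-BDF-CH-1} produces a consistent equation with a local truncation error $\tau^{n+1} = O(\dt^2 + h^4)$: the BDF2/extrapolation part uses $\nrm{\Phi_N^{n+1}-2\Phi_N^n+\Phi_N^{n-1}}_{H^k} = O(\dt^2)$, the Douglas-Dupont artificial term contributes $O(A\varepsilon^{-2}\dt \cdot \nrm{\Phi_N^{n+1}-\Phi_N^n}_{H^k}) = O(\dt^2)$, and the spatial consistency of the fourth order long stencil is handled through Lemma~\ref{lemma:1} together with \eqref{spectral-approximation-1}--\eqref{spectral-approximation-2}. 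The key observation is that all constants appearing in the truncation bound depend on $\varepsilon^{-1}$ in a polynomial way, thanks to the uniform-in-time $H^m$ regularity of $\Phi_N$ and of the numerical solution established in Section~\ref{sec:Hm-stab}.

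Subtracting gives an error equation of BDF2 form for $e^{n+1}$. Taking the discrete $(-\Delta_{h,(4)})^{-1}$ inner product with $e^{n+1}$, the temporal stencil yields a positive telescoping contribution in $\nrm{\cdot}_{-1,h}$ via the standard BDF2 identity, summation-by-parts \eqref{summation-1}--\eqref{summation-2} produces the surface-diffusion contribution $\varepsilon\dt\nrm{\nabla_{h,(4)} e^{n+1}}_2^2$, and the regularization term contributes a positive increment in $\varepsilon^{-2}\dt^2\nrm{\nabla_{h,(4)}(e^{n+1}-e^n)}_2^2$. The nonlinear term is expanded as $(\Phi_N^{n+1})^3 - (\phi^{n+1})^3 = 3(\Phi_N^{n+1})^2 e^{n+1} + R_{\rm nl}^{n+1}$, with $R_{\rm nl}^{n+1}$ quadratic and cubic in $e^{n+1}$. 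Combining the linearized cubic contribution with the $-\varepsilon^{-1}(2\phi^n-\phi^{n-1})$ concave-extrapolation term produces the canonical linearized Cahn-Hilliard quadratic form
\[
Q_h(e^{n+1}) := \varepsilon\nrm{\nabla_{h,(4)} e^{n+1}}_2^2 + \varepsilon^{-1}\bigl\langle (3(\Phi_N^{n+1})^2-1)\,e^{n+1},\, e^{n+1}\bigr\rangle ,
\]
plus a polynomial-in-$\varepsilon^{-1}$, $O(\dt^2)$ consistency remainder originating from $2\Phi_N^n-\Phi_N^{n-1}-\Phi_N^{n+1}$.

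The central and most delicate step is a favorable lower bound on $Q_h(e^{n+1})$ in the discrete $\nrm{\cdot}_{-1,h}$ norm with a constant independent of $\varepsilon$; a naive Cauchy-Schwarz would yield $-C\varepsilon^{-1}\nrm{e^{n+1}}_2^2$ and force the forbidden $\exp(C\varepsilon^{-m}T)$ through Gronwall. Instead, I would transfer $Q_h$ to its continuous counterpart acting on $(e^{n+1})_{\bf F}$: Lemma~\ref{lemma:2} replaces $\varepsilon\nrm{\nabla_{h,(4)} e^{n+1}}_2^2$ by $\varepsilon\nrm{\nabla (e^{n+1})_{\bf F}}^2$ up to an $O(\varepsilon h^4)$ remainder, Lemma~\ref{lemma:3} reduces the discrete cubic inner product to its continuous version up to an $O(h^8)$ aliasing error (harmlessly absorbed thanks to the $H^m$ bounds of $\phi^{n+1}$ and $\Phi_N^{n+1}$), and Lemma~\ref{lemma:1} relates the discrete $\nrm{\cdot}_{-1,h}$ norm to $\nrm{\cdot}_{H^{-1}}$. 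Proposition~\ref{Feng04NM}, the spectrum estimate for the linearized Cahn-Hilliard operator around the developed interface $\Phi$ from \cite{alikakos94, chenx94, feng04}, then delivers
\[
Q\bigl((e^{n+1})_{\bf F}\bigr) \ge -C_0 \nrm{(e^{n+1})_{\bf F}}_{H^{-1}}^2
\]
with $C_0$ independent of $\varepsilon$, which transfers to $-C_0\nrm{e^{n+1}}_{-1,h}^2$ on the discrete side up to polynomial-in-$\varepsilon^{-1}$ high order remainders.

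Assembling the pieces, the remainder $R_{\rm nl}^{n+1}$ and the $\dt^2$-consistency errors are controlled under an a priori inductive hypothesis $\nrm{e^n}_\infty \le 1$, guaranteed by the scaling laws $\dt\le\hat{C}\varepsilon^{J_1}$, $h\le\hat{C}\varepsilon^{J_2}$ together with the discrete Gagliardo-Nirenberg inequalities of Lemma~\ref{lemma:0}: the inductive $\nrm{\cdot}_{-1,h}$ bound of order $\varepsilon^{-J_0}(\dt^2+h^4)$ converts into an $L^\infty$ bound that, for $J_1$ and $J_2$ large enough, stays below $1/2$. The resulting one-step recursion
\[
\nrm{e^{n+1}}_{-1,h}^2 \le (1 + C_0^*\dt)\nrm{e^n}_{-1,h}^2 + C \dt\, \varepsilon^{-2J_0}(\dt^2 + h^4)^2 ,
\]
with $C_0^*$ independent of $\varepsilon$, yields the claim by discrete Gronwall. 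The hard part will be the bookkeeping at the discrete-to-continuous transfer step: guaranteeing that every remainder picked up from Lemmas~\ref{lemma:1}--\ref{lemma:3} and from the spectrum estimate carries only polynomial $\varepsilon^{-1}$ weight, which relies critically on the uniform $O(\dt^2)$ bound for $\nrm{\phi^{n+1}-2\phi^n+\phi^{n-1}}_{H^k}$ proved recursively for the BDF2 stencil in Section~\ref{sec:Hm-stab}, together with the matching $H^m$ bounds on $\phi^{n+1}$.
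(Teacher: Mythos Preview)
Your overall strategy matches the paper's proof closely: error equation, $(-\Delta_{h,(4)})^{-1}$ testing, BDF2 telescoping, discrete-to-continuous transfer via Lemmas~\ref{lemma:1}--\ref{lemma:3}, and the spectrum estimate of Proposition~\ref{Feng04NM} to get an $\varepsilon$-independent Gronwall coefficient. The structure is right.

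There is, however, one genuine soft spot in how you close the nonlinear remainder. You write that $R_{\rm nl}^{n+1}$, which is quadratic and cubic in $e^{n+1}$, is controlled under the inductive hypothesis $\nrm{e^n}_\infty\le 1$. But $R_{\rm nl}^{n+1}$ lives at the \emph{implicit} step $n+1$, so an $L^\infty$ bound on $e^n$ does not directly help. If you bridge the gap via $\nrm{e^{n+1}}_\infty\le\nrm{e^n}_\infty+O(\dt)$ and then bound the cubic term by $C\varepsilon^{-1}\nrm{e^{n+1}}_\infty\nrm{e^{n+1}}_2^2$, you are left with an $\varepsilon^{-1}\nrm{e^{n+1}}_2^2$ contribution. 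Interpolating $\nrm{e}_2^2\le\nrm{e}_{H^{-1}}\nrm{\nabla e}$ and absorbing the gradient piece into $\varepsilon\nrm{\nabla e}^2$ forces an $\varepsilon^{-3}\nrm{e}_{H^{-1}}^2$ coefficient, which reinstates exactly the singular exponential you are trying to avoid.

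The paper sidesteps this by a different use of the Section~\ref{sec:Hm-stab} estimates. Its a~priori assumption is posed directly in the $\nrm{\cdot}_{-1,h}$ norm at past steps, $\nrm{e^\ell}_{-1,h}\le\dt^{15/8}+h^{15/4}$ for $\ell\le n$, and the cubic remainder $\mathcal{IE}=-3(\Phi_N^{n+1},(e^{n+1}_{\bf F})^3)$ is turned into a \emph{pure source term}: one writes $e^{n+1}=\hat e^{n+1}+(e^{n+1}-2e^n+e^{n-1})$, bounds $\nrm{\hat e^{n+1}}_{L^3}$ by the interpolation $\nrm{\hat e^{n+1}}_{H^{-1}}^{3/4}\nrm{\hat e^{n+1}}_{H^5}^{1/4}$ (past steps only, so the a~priori bound applies), and bounds the second piece by the preliminary $O(\dt^2)$ estimate~\eqref{convergence-prelim est-3}. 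This yields $|\mathcal{IE}|\le C\varepsilon^{-k}(\dt^{135/32}+h^{135/16})$, a high-order constant on the right-hand side rather than anything multiplying $\nrm{e^{n+1}}_2^2$. That is precisely the role of the $\nrm{\phi^{n+1}-2\phi^n+\phi^{n-1}}_{H^k}$ bound you mention at the end: it is what lets the implicit nonlinear term be reduced to explicit data without picking up a bad $\varepsilon$-power in the Gronwall constant. With this adjustment your argument goes through as in the paper.
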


	\section{Higher order Sobolev estimates of the numerical scheme}  
	\label{sec:Hm-stab}
	
\subsection{$\ell^\infty \left( 0, T ; H_h^m \right)$ ($m \ge 2$) bound of the scheme}

A uniform in time $H_h^1$ bound of the numerical solution has been proved in Theorem~\ref{CH solvability}, as presented in~\cite{cheng2019a}. However, such a bound is not sufficient for the desired estimate of a refined convergence constant. In this section, we establish a uniform in time $H_h^m$ bound, for any $m \ge 2$, of the numerical solution. More importantly, the derived bound depends on $\varepsilon^{-1}$ in a polynomial form.  

	\begin{thm} \label{stab-H2-1}  
For the proposed numerical scheme~\eqref{scheme-BDF-CH-1}, if $\dt \le \frac{\varepsilon}{2 \sqrt{2} A^\frac12}$, we have 
	\begin{align}
\nrm{\phi_{\bf F} }_{\ell^\infty(0,T;H^2)} :=& 
\max_{0\le m\le M}\nrm{\phi^m_{\bf F} }_{H^2} 
 \le \hat{C}_{2,\varepsilon} 
  := C \varepsilon^{- k_2} ,
	\label{LinftyH2-stability-bound}
	\end{align}
in which $k_2$ is a positive integer and $C >0$ is a constant independent of $h$, $\dt$, $T$, and $\varepsilon$. 
	\end{thm}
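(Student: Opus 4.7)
The plan is to derive a uniform-in-time bound on $\nrm{\Delta_{h,(4)}\phi^n}_2$ with polynomial $\varepsilon^{-1}$ dependence, which by Lemma~\ref{lemma:1} translates to the stated bound on $\nrm{\phi^n_{\bf F}}_{H^2}$. The natural strategy is to upgrade the $H_h^1$ stability of Corollary~\ref{stab-H1-2} via a discrete energy estimate at a higher Sobolev level. First I would substitute the chemical-potential identity into the mass equation~\eqref{scheme-BDF-CH-1}, writing the scheme as a single fourth-order-in-space equation, and then take the discrete inner product with $\phi^{n+1}$. After summation by parts using the identities~\eqref{summation-1}--\eqref{summation-2}, the biharmonic contribution produces the coercive $\varepsilon \nrm{\Delta_{h,(4)}\phi^{n+1}}_2^2$, while the Douglas--Dupont stencil yields a telescoping backward difference $\frac{A\varepsilon^{-2}\dt}{2}(\nrm{\Delta_{h,(4)}\phi^{n+1}}_2^2 - \nrm{\Delta_{h,(4)}\phi^n}_2^2)$ together with a nonnegative remainder involving $\nrm{\Delta_{h,(4)}(\phi^{n+1}-\phi^n)}_2^2$; the BDF2 mass inner product produces a quadratic in $\phi^{n+1}$ whose behavior is controlled through the existing $H_h^1$ bound and mass conservation.

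The right-hand side $\varepsilon^{-1}\langle (\phi^{n+1})^3 - 2\phi^n + \phi^{n-1},\,\Delta_{h,(4)}\phi^{n+1}\rangle$ is controlled by invoking the three-dimensional Sobolev embedding $H^1\hookrightarrow L^6$ on the continuous extension $\phi^{n+1}_{\bf F}$, together with the norm equivalence~\eqref{lem-1-2} of Lemma~\ref{lemma:1} and the $H_h^1$ bound $\nrm{\phi^{n+1}}_{H_h^1}\le C\varepsilon^{-1}$. This yields $\nrm{(\phi^{n+1})^3}_2\le C\varepsilon^{-3}$, and then Cauchy--Schwarz followed by Young's inequality produces a bound of the form $\tfrac{\varepsilon}{2}\nrm{\Delta_{h,(4)}\phi^{n+1}}_2^2 + C\varepsilon^{-m}$ for an appropriate positive integer $m$. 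Half of the biharmonic dissipation absorbs the quadratic factor, leaving a residual $\tfrac{\varepsilon}{2}\nrm{\Delta_{h,(4)}\phi^{n+1}}_2^2$ to pair with the Douglas--Dupont backward difference.

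Rearranging the resulting one-step inequality yields a structure of the shape
$$\Bigl(\tfrac{\varepsilon}{2}+\tfrac{A\varepsilon^{-2}\dt}{2}\Bigr)\nrm{\Delta_{h,(4)}\phi^{n+1}}_2^2 \le \tfrac{A\varepsilon^{-2}\dt}{2}\nrm{\Delta_{h,(4)}\phi^n}_2^2 + C\varepsilon^{-m} + \mathcal{R}^n,$$
where $\mathcal{R}^n$ collects the BDF2 mass-side contributions, which are absorbed into a carefully chosen modified discrete energy so that their cumulative effect is controlled via the uniform $H_h^1$ bound. The coefficient ratio $\tfrac{A\varepsilon^{-2}\dt/2}{\varepsilon/2+A\varepsilon^{-2}\dt/2}$ is strictly less than one, so the resulting recursion is contractive; iterating it against the polynomial-in-$\varepsilon^{-1}$ forcing $C\varepsilon^{-m}$ produces the uniform-in-$n$ bound $\nrm{\Delta_{h,(4)}\phi^n}_2\le C\varepsilon^{-k_2}$ for a fixed integer $k_2$. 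The time-step constraint $\dt\le\varepsilon/(2\sqrt{2}A^{1/2})$ enters precisely here, guaranteeing that the non-telescoping portion of the Douglas--Dupont contribution is dominated by the residual $\tfrac{\varepsilon}{2}\nrm{\Delta_{h,(4)}\phi^{n+1}}_2^2$ so that the contractive structure is preserved. Passing to the continuous extension via Lemma~\ref{lemma:1} then completes the proof.

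The principal obstacle is that in three space dimensions the $H_h^1$ bound does not control $\nrm{\phi^{n+1}}_\infty$, so the cubic nonlinearity must be handled via the $L^6$ embedding rather than by a pointwise bound, which forces a delicate interplay between the biharmonic dissipation and the cubic term if polynomial $\varepsilon^{-1}$ dependence is to be preserved. A second subtlety is that the BDF2 mass inner product is not sign-definite at each individual step; its contribution must be absorbed into the modified-energy framework in such a way that the cumulative effect remains $T$-independent and does not compromise the contractive recursion that eventually yields the uniform-in-time $H^2$ estimate.
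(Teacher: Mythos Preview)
Your choice of test function is the gap. By pairing the scheme with $\phi^{n+1}$, the BDF2 temporal stencil telescopes at the $L^2$ level, producing an energy of the form $E^n=\tfrac12 Q^n+\tfrac{A\varepsilon^{-2}\dt^2}{2}\,a_n$ with $Q^n=\|\phi^n\|_2^2+\|2\phi^n-\phi^{n-1}\|_2^2$ and $a_n=\|\Delta_{h,(4)}\phi^n\|_2^2$. The only place $a_n$ enters $E^n$ is through the Douglas--Dupont contribution, and its coefficient is $O(\dt^2)$. Consequently, once you extract a bound on $a_{n+1}$ from the contractive recursion, you must divide by this $O(\dt^2)$ coefficient, and the bound blows up like $\dt^{-2}$. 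Equivalently, your remainder $\mathcal{R}^n$ carries a $\tfrac{1}{\dt}$ factor that does not disappear under the Abel summation implicit in iterating a contractive recursion: the weighted telescoping sum $\sum_k r^{n-k}(Q^k-Q^{k+1})$ is only $O(1)$, so $\sum_k r^{n-k}\mathcal{R}^k=O(\dt^{-1})$. Saying the ``cumulative effect is controlled via the $H^1$ bound'' hides precisely this loss; the result is a bound that is $T$-independent but not $\dt$-independent.

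The paper resolves this by testing with $2\Delta_{h,(4)}^2\phi^{n+1}$ instead. Then the BDF2 stencil itself telescopes in $\|\Delta_{h,(4)}\phi^n\|_2^2$ (the quantity you want), with an $O(1)$ coefficient, and the surface diffusion supplies $\varepsilon\dt\|\Delta_{h,(4)}^2\phi^{n+1}\|_2^2$ dissipation one order higher. The Poincar\'e-type inequality $\|\Delta_{h,(4)} f\|_2\le C_1\|\Delta_{h,(4)}^2 f\|_2$ of Lemma~\ref{lemma:1} then lets this dissipation absorb a fraction of the full modified energy $G^{n+1}$, yielding $(1+\kappa\varepsilon\dt)G^{n+1}\le G^n+C_{4,\varepsilon}\dt$ and hence a bound independent of $T$, $\dt$, and $h$. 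The price is that the nonlinear inner product becomes $\langle\Delta_{h,(4)}(\phi^{n+1})^3,\Delta_{h,(4)}^2\phi^{n+1}\rangle$, and $\|\Delta_h(\phi^{n+1})^3\|_2$ cannot be controlled by $H^1\hookrightarrow L^6$ alone; the paper handles this via the discrete Gagliardo--Nirenberg inequalities of Lemma~\ref{lemma:0}, interpolating $\|\phi^{n+1}\|_\infty$, $\|\nabla_h\phi^{n+1}\|_\infty$, and $\|\phi^{n+1}\|_{H_h^2}$ between the known $H_h^1$ bound and the $\|\Delta_h^2\phi^{n+1}\|_2$ dissipation. Your simpler $L^6$ treatment of the cubic term is only available because you tested at too low a level.
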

	
	\begin{proof}
Taking the discrete inner product with~\eqref{scheme-BDF-CH-1} by $2 \Delta_{h, (4)}^2 \phi^{n+1}$ gives  
	\begin{align} 
& \langle 3 \phi^{n+1} - 4 \phi^n + \phi^{n-1} , \Delta_{h, (4)}^2  \phi^{n+1} \rangle 
 + 2 \varepsilon \dt \| \Delta_{h, (4)}^2 \phi^{n+1} \|_2^2   \nonumber 
\\
  & 
 + 2 A \varepsilon^{-2} \dt^2 \langle \Delta_{h, (4)}^2 ( \phi^{n+1} - \phi^n ) , 
   \Delta_{h, (4)}^2 \phi^{n+1}  \rangle 
	\nonumber 
	\\
&  = 
   - 2 \varepsilon^{-1} \dt \langle \Delta_{h, (4)} ( 2 \phi^n - \phi^{n-1} ) , 
  \Delta_{h, (4)}^2 \phi^{n+1} \rangle 
 + 2 \varepsilon^{-1}\dt \langle \Delta_{h, (4)}  ( \phi^{n+1} )^3 ,  
  \Delta_{h, (4)}^2 \phi^{n+1} \rangle .   
	\label{H2-est-1}
	\end{align}
The temporal differentiation term could be analyzed as follows, with the help of summation-by-parts formula:  
\begin{align} 
  &
   \langle 3 \phi^{n+1} - 4 \phi^n + \phi^{n-1} , \Delta_{h, (4)}^2 \phi^{n+1} \rangle  
   = \langle  \Delta_{h, (4)} ( 3 \phi^{n+1} - 4 \phi^n + \phi^{n-1} ) , 
    \Delta_{h, (4)} \phi^{n+1} \rangle   
   \nonumber 
\\
  =&
     \frac12 ( \|  \Delta_{h, (4)} \phi^{n+1} \|_2^2 - \| \Delta_{h, (4)} \phi^n \|_2^2 
   + \| \Delta_{h, (4)} ( 2 \phi^{n+1} - \phi^n ) \|_2^2 
   - \| \Delta_{h, (4)} ( 2 \phi^n - \phi^{n-1} ) \|_2^2   \nonumber 
\\
  &
    +  \| \Delta_{h, (4)} ( \phi^{n+1} - 2 \phi^n + \phi^{n-1} ) \|_2^2 ) . \label{H2-est-2} 
\end{align} 
A triangular equality could be applied to the artificial regularization term:  
	\begin{align}
2 \langle \Delta_{h, (4)}^2 ( \phi^{n+1} - \phi^n ) , \Delta_{h, (4)}^2 \phi^{n+1}  \rangle   
=&  \| \Delta_{h, (4)}^2 \phi^{n+1} \|_2^2 - \| \Delta_{h, (4)}^2 \phi^n \|_2^2  
 +    \| \Delta_{h, (4)}^2 ( \phi^{n+1} - \phi^n)  \|_2^2   .    
	\label{H2-est-3}
	\end{align}
In terms of the concave diffusion term, we denote $\hat{\phi}^{n+1} = 2 \phi^n - \phi^{n-1}$ and see that 
	\begin{align}  
	  &
- \langle  \Delta_{h, (4)}  \hat{\phi}^{n+1} ,  \Delta_{h, (4)}^2 \phi^{n+1} \rangle \le   \alpha \| \Delta_{h, (4)}^2 \phi^{n+1} \|_2^2 + \frac{1}{4 \alpha}  \| \Delta_{h, (4)} \hat{\phi}^{n+1}  \|_2^2 
	\nonumber
	\\
	\le  & 
	\alpha \| \Delta_{h, (4)}^2 \phi^{n+1} \|_2^2 + \frac{4}{9 \alpha}  \| \Delta_h \hat{\phi}^{n+1}  \|_2^2  \le \alpha \| \Delta_{h, (4)}^2 \phi^{n+1} \|_2^2 
	 + \frac{8}{3 \alpha}  \| \Delta_h \phi^n  \|_2^2 
	 + \frac{4}{3 \alpha}  \| \Delta_h \phi^{n-1}  \|_2^2 ,  
    \label{H2-est-4-1}  
	\end{align}
for any $\alpha >0$, in which inequality~\eqref{inequality-0-2} has been applied at the second step. Meanwhile, the quantities $\| \Delta_h  \phi^n \|_2^2$, $\| \Delta_h  \phi^{n-1} \|_2^2$ can be controlled by
	\begin{equation} 
\begin{aligned} 
\| \Delta_h  \phi^\ell \|_2^2 \le & \| \nabla_h \phi^\ell \|_2^\frac43 \cdot \| \Delta_h^2 \phi^\ell \|_2^\frac23  \le \frac{2}{3 \alpha} \| \nabla_h \phi^\ell \|_2^2 + \frac{\alpha^2}{3} \| \Delta_h^2 \phi^\ell \|_2^2  
\\
  \le &  \frac{2}{3 \alpha} \| \nabla_h \phi^\ell \|_2^2 + \frac{\alpha^2}{3} 
  \| \Delta_{h, (4)}^2 \phi^\ell \|_2^2
   \le  \frac{2 \tilde{C}_{1, \varepsilon}^2}{3 \alpha} + \frac{\alpha^2}{3} 
  \| \Delta_{h, (4)}^2 \phi^\ell \|_2^2  ,  \quad \ell = n, n-1 , \, \,  \forall \alpha > 0 , 
\end{aligned} 
	\label{H2-est-4-2}    
	\end{equation}
in which the first step corresponds to a Sobolev interpolation inequality, the second step comes from an application of Young's inequality, the third step is based on inequality~\eqref{inequality-0-2}, and the preliminary $H_h^1$ estimate~\eqref{CH-H1 stab-0} is recalled in the last step. In turn, a combination of (\ref{H2-est-4-1}) and (\ref{H2-est-4-2}) leads to 
	\begin{align} 
  - \langle  \Delta_{h, (4)}  \hat{\phi}^{n+1} ,  \Delta_{h, (4)}^2 \phi^{n+1} \rangle 
   \le  \alpha \| \Delta_{h, (4)}^2 \phi^{n+1} \|_2^2   + \frac{8 \alpha}{9}  \| \Delta_{h, (4)}^2 \phi^n \|_2^2 + \frac{4 \alpha}{9}  \| \Delta_{h, (4)}^2 \phi^{n-1} \|_2^2 + \frac{8 \tilde{C}_{1, \varepsilon}^2}{3 \alpha^2}  , 
	\label{H2-est-4-3}
	\end{align}
for any $\alpha > 0$. 

Regarding the nonlinear term, we begin with the following observation:  
	\begin{equation} 
\begin{aligned} 
	\langle \Delta_{h, (4)}  ( \phi^{n+1} )^3 ,  \Delta_{h, (4)}^2 \phi^{n+1} \rangle  
	\le & \| \Delta_{h, (4)}  ( \phi^{n+1} )^3 \|_2 \cdot   
   \| \Delta_{h, (4)}^2 \phi^{n+1} \|_2  
      \le  \frac43  \| \Delta_h  ( \phi^{n+1} )^3 \|_2 \cdot   
   \| \Delta_{h, (4)}^2 \phi^{n+1} \|_2 ,  
\end{aligned} 
	\label{H2-est-5-1}    
	\end{equation} 
in which inequality~\eqref{inequality-0-2} has been applied again. The rest work is focused on obtaining a bound for $ \| \Delta_h  ( \phi^{n+1} )^3 \|_2$. Detailed expansions and repeated applications of the discrete H\"older inequality yields the following estimate (also see the derivation in~\cite{guo16}): 
	\begin{align}
\nrm{ \Delta_h ( f g h ) }_2 \le&  C  \Bigl(  \nrm{ f }_\infty \cdot \nrm{ g }_\infty  \cdot \nrm{ h }_{H_h^2}  +  \nrm{ f }_\infty \cdot \nrm{ h }_\infty  \cdot \nrm{ g }_{H_h^2}  
	\nonumber 
	\\
& +  \nrm{ g }_\infty \cdot \nrm{ h }_\infty  \cdot \nrm{ f }_{H_h^2}  + \nrm{ f }_\infty \cdot \nrm{ \nabla_h g }_\infty  \cdot \nrm{ \nabla_h h }_2 
	\nonumber 
	\\
& + \nrm{ g }_\infty \cdot \nrm{ \nabla_h f }_\infty  \cdot \nrm{ \nabla_h h }_2 + \nrm{ h }_\infty \cdot \nrm{ \nabla_h f }_\infty  \cdot \nrm{ \nabla_h g }_2 \Bigr)  .
	\label{H2-est-5-3}
	\end{align}
In turn, a substitution of $f=g=h=\phi^{n+1}$ gives  
	\begin{equation}  
	\| \Delta_h  ( \phi^{n+1} )^3 \|_2 
	\le   C  \nrm{ \phi^{n+1} }_\infty  \cdot  \nrm{ \nabla_h \phi^{n+1} }_\infty 
	\cdot  \nrm{ \phi^{n+1} }_{H_h^1} 
   + C   \nrm{ \phi^{n+1} }_\infty^2 \cdot   \nrm{ \phi^{n+1} }_{H_h^2} .
	\label{H2-est-5-4}
	\end{equation}
The uniform in time $H_h^1$ estimate~\eqref{CH-H1 stab-0}, combined with the 3-D discrete Sobolev inequality~\eqref{sobolev-1}, and the discrete Gagliardo-Nirenberg type inequalities~\eqref{discrete-gagl-niren-1} and  \eqref{discrete-gagl-niren-2}, yields
	\begin{align}
 \| \phi^\ell \|_{H_h^2} \le&  C ( \| \phi^\ell \|_{H_h^1}^{\frac23} \| \Delta_h^2 \phi^\ell \|_2^{\frac13} + \| \phi^\ell \|_{H_h^1} )   \le C \Big( \tilde{C}_{1, \varepsilon}^{\frac23} \cdot \| \Delta_h^2 \phi^\ell \|_2^{\frac13} +  \tilde{C}_{1, \varepsilon} \Big) ,   \label{H2-est-5-5-2} 
            \\
  \| \phi^\ell \|_{\infty} \le&  C ( \| \phi^\ell \|_{H_h^1}^{\frac56} \| \Delta_h^2 \phi^\ell \|_2^{\frac16} + \| \phi^\ell \|_{H_h^1} )   \le  C \Big( \tilde{C}_{1, \varepsilon}^{\frac56} \cdot \| \Delta_h^2 \phi^\ell \|_2^{\frac16} + \tilde{C}_{1, \varepsilon} \Big) ,  
	\label{H2-est-5-5-3}
            \\
 \| \nabla_h \phi^\ell \|_{\infty} \le&  C_{11} \| \phi^\ell \|_{H_h^1}^{\frac12} \cdot \| \Delta_h^2 \phi^\ell \|_2^{\frac12}  \le C \tilde{C}_{1, \varepsilon}^{\frac12} \cdot \| \Delta_h^2 \phi^\ell \|_2^{\frac12}  ,  
	\label{H2-est-5-5-4}
	\end{align}
for $\ell= n+1$. Going back~\eqref{H2-est-5-4}, we obtain  
	\begin{equation} 
   \| \Delta_h  ( \phi^{n+1} )^3 \|_2  
   \le C \Big( \tilde{C}_{1, \varepsilon}^{\frac73} \cdot \| \Delta_h^2 \phi^{n+1} \|_2^{\frac23} 
     +  \tilde{C}_{1, \varepsilon}^3 \Big)  \le  C \Big( \tilde{C}_{1, \varepsilon}^{\frac73} \cdot \| \Delta_{h, (4)}^2 \phi^{n+1} \|_2^{\frac23} 
     +  \tilde{C}_{1, \varepsilon}^3 \Big)  ,
	\label{H2-est-5-6}
	\end{equation}
with an application of inequality~\eqref{inequality-0-2} in the second step. 
A combination of~\eqref{H2-est-5-6} and \eqref{H2-est-5-1} implies that 
	\begin{equation} 
\begin{aligned} 
	\langle \Delta_{h, (4)}  ( \phi^{n+1} )^3 ,  \Delta_{h, (4)}^2 \phi^{n+1} \rangle  
	\le & C \Big( \tilde{C}_{1, \varepsilon}^{\frac73} \cdot \| \Delta_{h, (4)}^2 \phi^{n+1} \|_2^{\frac53} +  \tilde{C}_{1, \varepsilon}^3 \cdot \| \Delta_{h, (4)}^2 \phi^{n+1} \|_2 \Big) 
\\
   \le & 
      C  \alpha^{-5} \tilde{C}_{1, \varepsilon}^{14}  
      + \alpha \| \Delta_{h, (4)}^2 \phi^{n+1} \|_2^2 , \quad 
     \forall \alpha > 0 ,   
\end{aligned} 
	\label{H2-est-5-7}    
	\end{equation} 
in which the Young's inequality has been applied. 

Therefore, a substitution~\eqref{H2-est-2}, \eqref{H2-est-3}, \eqref{H2-est-4-3} and \eqref{H2-est-5-7} into \eqref{H2-est-1} results in 
	\begin{align} 
& \frac12 ( \|  \Delta_{h, (4)} \phi^{n+1} \|_2^2 - \| \Delta_{h, (4)} \phi^n \|_2^2 
   + \| \Delta_{h, (4)} ( 2 \phi^{n+1} - \phi^n ) \|_2^2 
   - \| \Delta_{h, (4)} ( 2 \phi^n - \phi^{n-1} ) \|_2^2 )   \nonumber 
\\
  &   
 + 2 A \varepsilon^{-2} \dt^2 (  \| \Delta_{h, (4)}^2 \phi^{n+1} \|_2^2 
 - \| \Delta_{h, (4)}^2 \phi^n \|_2^2  )  
 + 2 ( \varepsilon - 2 \alpha \varepsilon^{-1} ) \dt 
 \| \Delta_{h, (4)}^2 \phi^{n+1} \|_2^2  \nonumber 
	\\
 \le &  
    \frac{16 \alpha \varepsilon^{-1} \dt}{9}  \| \Delta_{h, (4)}^2 \phi^n \|_2^2 
   + \frac{8 \alpha \varepsilon^{-1} \dt}{9}  \| \Delta_{h, (4)}^2 \phi^{n-1} \|_2^2  
   +  C_{4, \varepsilon} \dt,  \label{H2-est-6-1} 
\\
  & 
    \mbox{with} \quad  
     C_{4, \varepsilon} = \frac{16 \tilde{C}_{1, \varepsilon}^2 \varepsilon^{-1} }{3 \alpha^2}  
     + C  \alpha^{-5} \tilde{C}_{1, \varepsilon}^{14} \varepsilon^{-1}  .   
	\nonumber 
	\end{align}
Subsequently, by setting $\alpha = \frac{1}{16} \varepsilon^2$, we get 
	\begin{align} 
& \frac12 ( \|  \Delta_{h, (4)} \phi^{n+1} \|_2^2  
   + \| \Delta_{h, (4)} ( 2 \phi^{n+1} - \phi^n ) \|_2^2  ) 
   + 2 A \varepsilon^{-2} \dt^2  \| \Delta_{h, (4)}^2 \phi^{n+1} \|_2^2  
   + \frac{7 \varepsilon \dt}{4} \| \Delta_{h, (4)}^2 \phi^{n+1} \|_2^2   \nonumber 
\\
  \le &   
   \frac12 ( \| \Delta_{h, (4)} \phi^n \|_2^2 
   + \| \Delta_{h, (4)} ( 2 \phi^n - \phi^{n-1} ) \|_2^2 )  
  + 2 A \varepsilon^{-2} \dt^2 \| \Delta_{h, (4)}^2 \phi^n \|_2^2   \nonumber 
	\\
    &  
    + \frac{\varepsilon \dt}{18}  ( 2 \| \Delta_{h, (4)}^2 \phi^n \|_2^2 
   +  \| \Delta_{h, (4)}^2 \phi^{n-1} \|_2^2 )   
   +  C_{4, \varepsilon} \dt .   
	\label{H2-est-6-2}
	\end{align} 
For simplicity, we denote $F^n := \frac12 ( \| \Delta_{h, (4)} \phi^n \|_2^2 
   + \| \Delta_{h, (4)} ( 2 \phi^n - \phi^{n-1} ) \|_2^2 )  
  + 2 A \varepsilon^{-2} \dt^2 \| \Delta_{h, (4)}^2 \phi^n \|_2^2$. Meanwhile, an addition of $\frac{2 \varepsilon \dt}{9} \| \Delta_{h, (4)}^2 \phi^n \|_2^2$ to both sides of the last inequality yields  
	\begin{align} 
  & F^{n+1} 
   + \frac{7 \varepsilon \dt}{4} \| \Delta_{h, (4)}^2 \phi^{n+1} \|_2^2  
   + \frac{2 \varepsilon \dt}{9} \| \Delta_{h, (4)}^2 \phi^n \|_2^2  \nonumber 
\\
  \le &   
   F^n 
    + \frac{\varepsilon \dt}{3}   \| \Delta_{h, (4)}^2 \phi^n \|_2^2 
   +  \frac{\varepsilon \dt}{18} \| \Delta_{h, (4)}^2 \phi^{n-1} \|_2^2    
   +  C_{4, \varepsilon} \dt .   
	\label{H2-est-6-3}
	\end{align} 
In turn, a modified functional quantity is introduced as 
	\begin{equation} 
\begin{aligned} 
G^n := & F^n 
    + \frac{\varepsilon \dt}{3}   \| \Delta_{h, (4)}^2 \phi^n \|_2^2 
   +  \frac{\varepsilon \dt}{18} \| \Delta_{h, (4)}^2 \phi^{n-1} \|_2^2 
\\
  = & 
  ( \frac12 +  2 A \varepsilon^{-2} \dt^2 ) \| \Delta_{h, (4)} \phi^n \|_2^2 
   + \frac12 \| \Delta_{h, (4)} ( 2 \phi^n - \phi^{n-1} ) \|_2^2   
\\
  & 
    + \frac{\varepsilon \dt}{3}   \| \Delta_{h, (4)}^2 \phi^n \|_2^2 
   +  \frac{\varepsilon \dt}{18} \| \Delta_{h, (4)}^2 \phi^{n-1} \|_2^2 ,  
\end{aligned} 
	\label{H2-est-6-5}    
	\end{equation} 
so that the following inequality is valid: 
	\begin{equation} 
G^{n+1} + \frac{17 \varepsilon \dt}{12}   \| \Delta_{h, (4)}^2 \phi^{n+1} \|_2^2 
   +  \frac{\varepsilon \dt}{6} \| \Delta_{h, (4)}^2 \phi^n \|_2^2 
   \le G^n + C_{4, \varepsilon} \dt .
	\label{H2-est-6-6}
	\end{equation}
On the other hand, the following estimates could be derived, with the help of inequality \eqref{H2-est-6-7}:  
\begin{equation} 
\begin{aligned} 
  & 
  ( \frac12 +  2 A \varepsilon^{-2} \dt^2 ) \| \Delta_{h, (4)} \phi^{n+1} \|_2^2  
  \le \frac34 \| \Delta_{h, (4)} \phi^{n+1} \|_2^2  
  \le \frac34 C_1^2  \| \Delta_{h, (4)}^2 \phi^{n+1} \|_2^2  ,   \, \, 
  \mbox{if $\dt \le \frac{\varepsilon}{2 \sqrt{2} A^\frac12}$} , 
\\
  & 
  \frac12 \| \Delta_{h, (4)} ( 2 \phi^{n+1} - \phi^n ) \|_2^2  
  \le 3 \| \Delta_{h, (4)}  \phi^{n+1} \|_2^2  
  + \frac32 \| \Delta_{h, (4)}  \phi^n \|_2^2  
\\
  &  \qquad \qquad \qquad \qquad \qquad \quad 
  \le  3 C_1^2 \| \Delta_{h, (4)}^2  \phi^{n+1} \|_2^2  
  + \frac{3 C_1^2}{2} \| \Delta_{h, (4)}^2  \phi^n \|_2^2 , 
\\
  & \mbox{so that} \, \, \, 
   G^{n+1} =
  ( \frac12 +  2 A \varepsilon^{-2} \dt^2 ) \| \Delta_{h, (4)} \phi^{n+1} \|_2^2 
   + \frac12 \| \Delta_{h, (4)} ( 2 \phi^{n+1} - \phi^n ) \|_2^2   
\\
  &  \qquad \qquad  \qquad  \quad 
    + \frac{\varepsilon \dt}{3}   \| \Delta_{h, (4)}^2 \phi^{n+1} \|_2^2 
   +  \frac{\varepsilon \dt}{18} \| \Delta_{h, (4)}^2 \phi^n \|_2^2 
\\
  &  \qquad \qquad \qquad  
   \le  4 C_1^2 \| \Delta_{h, (4)}^2  \phi^{n+1} \|_2^2  
  + 2 C_1^2 \| \Delta_{h, (4)}^2  \phi^n \|_2^2  ,   \quad \mbox{if} \, \, 
  \dt \le \frac34 C_1^2 \varepsilon^{-1} , 
\\
  & \mbox{and} \quad  
  \frac{17 \varepsilon \dt}{12}   \| \Delta_{h, (4)}^2 \phi^{n+1} \|_2^2 
   +  \frac{\varepsilon \dt}{6} \| \Delta_{h, (4)}^2 \phi^n \|_2^2  
   \ge \frac{\varepsilon \dt}{12 C_1^2} G^{n+1}  
   +   \varepsilon \dt   \| \Delta_{h, (4)}^2 \phi^{n+1} \|_2^2 . 
\end{aligned} 
  \label{H2-est-6-8}
\end{equation} 
Going back~\eqref{H2-est-6-6}, we arrive at 
	\begin{equation} 
\Big( 1 + \frac{ \varepsilon \dt}{12 C_1^2}  \Big) G^{n+1} 
+ \varepsilon \dt   \| \Delta_{h, (4)}^2 \phi^{n+1} \|_2^2 
  \le   G^n  + C_{4, \varepsilon}  \dt  .
	\label{H2-est-6-9}    
	\end{equation}
An application of recursive argument to the above inequality reveals that
	\begin{equation}
\begin{aligned} 
  & 
G^{n+1} \le 
  \Big(1+\frac{ \varepsilon^2 \dt}{12 C_1^2} \Big)^{-(n+1)} G^0 
  + 12 C_1^2  \varepsilon^{-1} C_{4, \varepsilon} 
   \le G^0 + 12 C_1^2  \varepsilon^{-1} C_{4, \varepsilon}  ,   \quad \mbox{so that} 
\\
  &
   \| \Delta_h \phi^{n+1} \|_2^2  \le  \| \Delta_{h, (4)} \phi^{n+1} \|_2^2  
   \le 2 G^{n+1} \le 2 ( G^0 + 12 C_1^2  \varepsilon^{-1} C_{4, \varepsilon} ), \quad 
   \forall n \ge 0 . 
\end{aligned} 
	\label{H2-est-6-11}
	\end{equation}
As a consequence, \eqref{LinftyH2-stability-bound} is a direct consequence of \eqref{H2-est-6-11}  and the elliptic regularity: 
\begin{align} 
  \nrm{ \phi_{\bf F}^{n+1} }_{H^2}  
  \le& C  (   
   \|  \phi_{\bf F}^{n+1} \| 
   +  \| \Delta  \phi_{\bf F}^{n+1} \|  )  
  \le C  \left(  \hat{C}_{1, \varepsilon}  
   +  \| \Delta_h  \phi^{n+1} \|_2   \right) 
   \nonumber 
\\
  \le& 
   C  (  \hat{C}_{1, \varepsilon}  +  ( 2 G^{n+1} )^\frac12   ) 
   \le C  (  \hat{C}_{1, \varepsilon}  
   +  \sqrt{2} ( G^0 )^{1/2} 
  + 2 \sqrt{6} C_1 C_{4, \varepsilon}^\frac12 \varepsilon^{-\frac12}   )  
  : = \hat{C}_{2, \varepsilon} , 
   \label{H2-est-6-12}    
	\end{align} 
in which inequality~\eqref{lem-1-1} has been applied in the second step. It is clear that $\hat{C}_{2, \varepsilon}$ depends on $\varepsilon^{-1}$ in a polynomial form, since $C_{4, \varepsilon}$ does. The proof of Theorem~\ref{stab-H2-1} is complete. 
	\end{proof}

Using numerical analysis techniques, a uniform in time $H^{m_0}$ bound for the numerical solution could be established, for any $m_0 \ge 2$. To obtain such a bound, a discrete inner product with~\eqref{scheme-BDF-CH-1} is taken by $(- \Delta_{h, (4)} )^{m_0} \phi^{n+1}$, and repeated application of discrete H\"older inequality, Sobolev embedding, as well as Gagliardo-Nirenberg inequality, will play a crucial role in the derivation. The details are left for interested readers. 

	\begin{thm} \label{stab-Hm-1}  
For the proposed numerical scheme~\eqref{scheme-BDF-CH-1}, if $\dt \le C \varepsilon$, we have 
	\begin{align}
\nrm{\phi_{\bf F} }_{\ell^\infty(0,T;H^{m_0})} :=& 
\max_{0\le m\le M}\nrm{\phi^m_{\bf F} }_{H^{m_0} } 
 \le \hat{C}_{m_0, \varepsilon} 
  := C \varepsilon^{- k_{m0} } ,
	\label{LinftyHm-stability-bound}
	\end{align}
where $k_{m0}$ is a positive integer and $C>0$ is a constant independent of $h$, $\dt$, $T$, and $\varepsilon$. 
	\end{thm}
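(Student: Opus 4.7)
The plan is to proceed by induction on $m_0 \ge 2$, with Theorem~\ref{stab-H2-1} providing the base case. Suppose inductively that $\| \phi_{\bf F}^n \|_{H^{m_0-1}} \le \hat{C}_{m_0-1, \varepsilon}$ holds uniformly in $n$ with polynomial dependence on $\varepsilon^{-1}$; the goal is to derive the analogous bound at level $m_0$. The natural test function is $2 (-\Delta_{h,(4)})^{m_0} \phi^{n+1}$, paralleling the choice $2 \Delta_{h,(4)}^2 \phi^{n+1}$ in the $H_h^2$ proof. Summation by parts on the BDF2 temporal stencil produces a telescoping identity involving $\| (-\Delta_{h,(4)})^{m_0/2} \phi^{n+1} \|_2^2$ and $\| (-\Delta_{h,(4)})^{m_0/2}(2 \phi^{n+1} - \phi^n) \|_2^2$, analogous to \eqref{H2-est-2}; the Douglas--Dupont term yields a triangle identity analogous to \eqref{H2-est-3}; and the surface diffusion provides the coercive term $2 \varepsilon \dt \| (-\Delta_{h,(4)})^{(m_0+2)/2} \phi^{n+1} \|_2^2$.

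The concave linear contribution $-\varepsilon^{-1} \dt \langle \Delta_{h,(4)} (2 \phi^n - \phi^{n-1}), (-\Delta_{h,(4)})^{m_0} \phi^{n+1} \rangle$ is handled by Cauchy--Schwarz, Young's inequality with coefficient $\alpha \propto \varepsilon^2$, and a discrete Sobolev interpolation between $H_h^{m_0-1}$ (bounded by $\hat{C}_{m_0-1,\varepsilon}$) and the top-order quantity $\| (-\Delta_{h,(4)})^{(m_0+2)/2} \phi^n \|_2$, mirroring the estimate \eqref{H2-est-4-1}--\eqref{H2-est-4-3}. The principal obstacle is the nonlinear contribution
\begin{equation*}
\varepsilon^{-1} \dt \langle \Delta_{h,(4)} (\phi^{n+1})^3 , (-\Delta_{h,(4)})^{m_0} \phi^{n+1} \rangle ,
\end{equation*}
which after Cauchy--Schwarz reduces to estimating $\| (\phi^{n+1})^3 \|_{H_h^{m_0}}$. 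This requires a high-order discrete Leibniz product identity generalizing \eqref{H2-est-5-3}, producing a dominant term $C \| \phi^{n+1} \|_{\infty}^2 \cdot \| \phi^{n+1} \|_{H_h^{m_0}}$ plus many cross terms involving discrete $L^\infty$ norms of intermediate derivatives and mixed Sobolev norms $\| \phi^{n+1} \|_{H_h^k}$ with $k \le m_0 - 1$. Each such $L^\infty$ norm is controlled by the discrete Gagliardo--Nirenberg inequalities in Lemma~\ref{lemma:0} applied at the appropriate order, and each intermediate Sobolev norm is bounded via interpolation between the induction hypothesis and a small fraction of $\| (-\Delta_{h,(4)})^{(m_0+2)/2} \phi^{n+1} \|_2$. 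Choosing the Young coefficient proportional to $\varepsilon^2$ then absorbs the top-order contribution into the diffusion, leaving a residue $C_{m_0, \varepsilon} \dt$ with $C_{m_0, \varepsilon}$ polynomial in $\varepsilon^{-1}$.

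The resulting inequality couples three consecutive time levels, exactly as in \eqref{H2-est-6-1}. Mimicking \eqref{H2-est-6-5}, assemble a composite functional $G^n$ consisting of $\| (-\Delta_{h,(4)})^{m_0/2} \phi^n \|_2^2$, $\| (-\Delta_{h,(4)})^{m_0/2} (2 \phi^n - \phi^{n-1}) \|_2^2$, plus $O(\varepsilon \dt)$ multiples of $\| (-\Delta_{h,(4)})^{(m_0+2)/2} \phi^n \|_2^2$ and $\| (-\Delta_{h,(4)})^{(m_0+2)/2} \phi^{n-1} \|_2^2$. Under $\dt \le C \varepsilon$, the analog of \eqref{H2-est-6-8} bounds $G^{n+1}$ by a multiple of the top-order diffusion term at level $n+1$, yielding
\begin{equation*}
( 1 + c \varepsilon \dt ) G^{n+1} + \varepsilon \dt \| (-\Delta_{h,(4)})^{(m_0+2)/2} \phi^{n+1} \|_2^2 \le G^n + C_{m_0, \varepsilon} \dt .
\end{equation*}
Iterating this recursion gives a uniform in time bound on $G^n$ that is independent of $T$. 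Finally, the elliptic regularity argument of \eqref{H2-est-6-12}, via the norm comparison \eqref{lem-1-1}, upgrades this discrete bound to $\| \phi_{\bf F}^n \|_{H^{m_0}} \le \hat{C}_{m_0, \varepsilon}$ with the claimed polynomial $\varepsilon^{-1}$ dependence, closing the induction.

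The hardest step is the combinatorial bookkeeping in the high-order discrete product estimate for $\| (\phi^{n+1})^3 \|_{H_h^{m_0}}$: one must verify that every cross term in the Leibniz expansion can be split, via a carefully chosen sequence of discrete Gagliardo--Nirenberg and Young inequalities, into a part dominated by the induction hypothesis (hence contributing only to $C_{m_0, \varepsilon}$) and a part absorbable into $\alpha \| (-\Delta_{h,(4)})^{(m_0+2)/2} \phi^{n+1} \|_2^2$ with $\alpha \propto \varepsilon^2$. The bookkeeping is mechanical but tedious, which is precisely why the authors leave the details to the reader.
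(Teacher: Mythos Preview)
Your proposal is correct and follows essentially the same approach as the paper: the paper itself does not give a detailed proof of this theorem, but only the one-sentence sketch ``a discrete inner product with~\eqref{scheme-BDF-CH-1} is taken by $(-\Delta_{h,(4)})^{m_0}\phi^{n+1}$, and repeated application of discrete H\"older inequality, Sobolev embedding, as well as Gagliardo--Nirenberg inequality, will play a crucial role in the derivation,'' with the details explicitly left to the reader. Your write-up is a faithful and accurate fleshing-out of that sketch, adding the natural inductive structure on $m_0$ and correctly identifying the product estimate for $\|(\phi^{n+1})^3\|_{H_h^{m_0}}$ as the place where the bookkeeping accumulates.
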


\subsection{A uniform estimate for $\nrm{ \phi^{n+1} - \phi^n }_{H_h^k}$ } 

The following estimate is needed in the later analysis. 

	\begin{thm} \label{stab-time-1}
For the proposed numerical scheme~\eqref{scheme-BDF-CH-1}, if $\dt \le C \varepsilon$,  we have 
	\begin{eqnarray}
\max_{0\le n \le M-1} \nrm{ \phi^{n+1}_{\bf F} 
- \phi^n_{\bf F} }_{H^k} 
 \le \hat{D}_{k, \varepsilon} \dt ,  \quad 
  \mbox{with} \, \,  \hat{D}_{k, \varepsilon} 
  := C \varepsilon^{- n_k } ,
	\label{time-stability-1-bound}
	\end{eqnarray}
where $n_k$ is a positive integer and $C>0$ is a constant independent of $h$, $\dt$, $T$, and $\varepsilon$. 
	\end{thm}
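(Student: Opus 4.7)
The plan is to exploit the BDF2 structure to derive a contraction-type recursion for $\delta\phi^{n+1} := \phi^{n+1}-\phi^n$. Using the algebraic identity $\tfrac32 \phi^{n+1} - 2\phi^n + \tfrac12\phi^{n-1} = \tfrac32 \delta\phi^{n+1} - \tfrac12\delta\phi^n$, scheme~\eqref{scheme-BDF-CH-1} is equivalent to
\begin{equation*}
 \tfrac32\,\delta\phi^{n+1} \;=\; \tfrac12\,\delta\phi^n \;+\; \dt\,\Delta_{h,(4)}\mu^{n+1}.
\end{equation*}
I would work in a discrete $H_h^k$-type norm and translate to the continuous $H^k$ norm on the Fourier interpolant at the end via Lemma~\ref{lemma:1}. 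Taking norms on both sides gives the recursion
\begin{equation*}
 \nrm{\delta\phi^{n+1}}_{H_h^k} \;\le\; \tfrac13\,\nrm{\delta\phi^n}_{H_h^k} \;+\; \tfrac{2\dt}{3}\,\nrm{\Delta_{h,(4)}\mu^{n+1}}_{H_h^k},
\end{equation*}
whose contraction factor $1/3$ is the entire reason the BDF2 structure yields a $T$-independent bound: once we control the source term by a constant polynomial in $\varepsilon^{-1}$, iteration produces a uniform-in-$n$ estimate.

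Next, I would bound each of the four contributions to $\Delta_{h,(4)}\mu^{n+1}$ in $H_h^k$ using the uniform $\ell^\infty(0,T;H_h^{m_0})$ bounds from Theorem~\ref{stab-Hm-1} with $m_0 = k+4$. For the cubic nonlinearity, a discrete Leibniz identity for $\Delta_h(fgh)$ in the style of~\eqref{H2-est-5-3}, combined with the discrete Sobolev embedding $H_h^{m_0}\hookrightarrow L^\infty$ and the Gagliardo–Nirenberg inequalities~\eqref{discrete-gagl-niren-1}--\eqref{discrete-gagl-niren-2}, delivers $\nrm{\Delta_{h,(4)}(\phi^{n+1})^3}_{H_h^k} \le C\hat{C}_{k+2,\varepsilon}^3$. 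The extrapolated concave-linear term contributes $C\varepsilon^{-1}\hat{C}_{k+2,\varepsilon}$, and the surface diffusion $\varepsilon\Delta_{h,(4)}^2\phi^{n+1}$ contributes $C\varepsilon\hat{C}_{k+4,\varepsilon}$. The artificial regularization $A\varepsilon^{-2}\dt\,\Delta_{h,(4)}^2\delta\phi^{n+1}$ is handled by the crude bound $\nrm{\Delta_{h,(4)}^2\delta\phi^{n+1}}_{H_h^k}\le 2\hat{C}_{k+4,\varepsilon}$; combined with the scaling hypothesis $\dt \le C\varepsilon$, this contributes $2AC\varepsilon^{-1}\hat{C}_{k+4,\varepsilon}$, which remains polynomial in $\varepsilon^{-1}$. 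Summing yields $\nrm{\Delta_{h,(4)}\mu^{n+1}}_{H_h^k} \le \tilde{C}_{k,\varepsilon} = C\varepsilon^{-p_k}$ uniformly in $n$ and $T$.

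Iterating the recursion and summing the geometric series gives
\begin{equation*}
 \nrm{\delta\phi^{n+1}}_{H_h^k} \;\le\; 3^{-n}\,\nrm{\delta\phi^1}_{H_h^k} \;+\; \dt\,\tilde{C}_{k,\varepsilon}.
\end{equation*}
The base case $\nrm{\delta\phi^1}_{H_h^k} = O(\dt)$ is obtained from the ghost-point formula~\eqref{scheme-BDF-CH-initial-1}: substituting $\phi^{-1} = \phi^0 - \dt\,\Delta_{h,(4)}\mu_h^0$ into scheme~\eqref{scheme-BDF-CH-1} at $n=0$ gives $\tfrac32\,\delta\phi^1 = \dt\,\Delta_{h,(4)}\mu^1 + \tfrac{\dt}{2}\,\Delta_{h,(4)}\mu_h^0$, and the same estimates as above at time levels $0$ and $1$ control both right-hand terms polynomially in $\varepsilon^{-1}$ using the regularity of the initial data. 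A final application of Lemma~\ref{lemma:1} converts the discrete bound into the continuous-interpolant bound~\eqref{time-stability-1-bound}.

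The main technical obstacle I expect is the cubic term in $H_h^k$: one must carefully distribute $\Delta_h$ across a triple product, then invoke discrete H\"older, Sobolev embedding, and Gagliardo–Nirenberg, and finally Theorem~\ref{stab-Hm-1} with a sufficiently large $m_0$, throughout maintaining only polynomial dependence on $\varepsilon^{-1}$. The secondary subtlety is that the artificial-regularization term involves $\delta\phi^{n+1}$ itself; a naive estimate would introduce a coefficient $A\varepsilon^{-2}\dt$ that could overpower the contraction factor $1/3$, and the hypothesis $\dt \le C\varepsilon$ is exactly what downgrades this to $O(\varepsilon^{-1})$ times $\hat{C}_{k+4,\varepsilon}$, preserving the strict contraction with a constant-in-$n$ source.
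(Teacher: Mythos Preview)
Your proposal is correct and follows the same overall route as the paper: rewrite the BDF2 left-hand side as $\tfrac32\delta\phi^{n+1}-\tfrac12\delta\phi^n$, bound $\Delta_{h,(4)}\mu^{n+1}$ uniformly in $n$ by a constant polynomial in $\varepsilon^{-1}$ using Theorem~\ref{stab-Hm-1}, obtain the contraction recursion with factor $\tfrac13$, and iterate with the ghost-point initialization~\eqref{scheme-BDF-CH-initial-1} providing the $O(\dt)$ base case.

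The only tactical difference is where the norms live. You plan to work in the discrete $H_h^k$ norm and invoke a discrete Leibniz expansion of $\Delta_h(fgh)$ for the cubic term, then pass to $\|\cdot_{\bf F}\|_{H^k}$ via Lemma~\ref{lemma:1} at the very end. The paper instead applies $(\cdot)_{\bf F}$ at the outset, uses~\eqref{lem-1-4} to replace $\Delta_{h,(4)}$ by the continuous $\Delta$ on the interpolant, and then bounds the cubic term in one line via the algebra property of $H^{k+2}$:
\[
\|\Delta({\cal I}_N(\phi_{\bf F}^{n+1})^3)\|_{H^k}\le C\|(\phi_{\bf F}^{n+1})^3\|_{H^{k+2}}\le C\|\phi_{\bf F}^{n+1}\|_{H^{k+2}}^3\le C\hat{C}_{k+2,\varepsilon}^3.
\]
This sidesteps precisely the ``main technical obstacle'' you anticipate, since no discrete product rule beyond~\eqref{H2-est-5-3} is ever needed. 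Your route is workable but heavier; the paper's choice of passing to the continuous side first is what makes the nonlinear estimate clean for arbitrary $k$.
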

	
\begin{proof}
A careful evaluation of the numerical scheme~\eqref{scheme-BDF-CH-1} indicates that 
 	\begin{eqnarray}
 \Big\| ( \frac32 \phi^{n+1}- 2 \phi^n + \frac12 \phi^{n-1} )_{\bf F} \Big\|_{H^k} = 
  \dt \| ( \Delta_{h, (4)} \mu^{n+1} )_{\bf F} \|_{H^k} 
  \le  \dt \| \Delta ( \mu^{n+1} )_{\bf F} \|_{H^k} ,  \label{est-time-1-1} 
    \end{eqnarray} 
in which the last step comes from estimate~\eqref{lem-1-4}). On the other hand, the term $( \mu^{n+1} )_{\bf F}$ has the following expansion:  
    \begin{equation} 
 ( \mu^{n+1} )_{\bf F} =    
   \varepsilon^{-1}  \Big(  {\cal I}_N ( \phi_{\bf F}^{n+1} )^3 
 - \hat{\phi}_{\bf F}^{n+1}  \Big) 
 - \varepsilon  ( \Delta_{h, (4)} \phi^{n+1} )_{\bf F} 
 - A \varepsilon^{-2} \dt ( \Delta_{h, (4)} ( \phi^{n+1} - \phi^n ) )_{\bf F}  , 
  \label{est-time-1-2} 
  \end{equation}
with $\hat{\phi}_{\bf F}^{n+1} =  2 \phi_{\bf F}^n - \phi_{\bf F}^{n-1}$. Moreover, with the help of the uniform in time estimates (\ref{LinftyHm-stability-bound}), combined with repeated applications of H\"older inequality and Sobolev embedding, we are able to derive the following estimates: 
\begin{align} 
  & 
  \nrm{ \Delta ( {\cal I}_N ( \phi_{\bf F}^{n+1} )^3 ) }_{H^k}  
  \le 
  C \nrm{ ( \phi_{\bf F}^{n+1} )^3 }_{H^{k+2} }  
  \le C  \nrm{ \phi_{\bf F}^{n+1} }_{H^{k+2} }^3 
   \le C \hat{C}_{k+2, \varepsilon}^3 , 
   \label{est-time-1-3} 
\\
  & 
   \| \Delta \hat{\phi}_{\bf F}^{n+1}  \|_{H^k} 
  \le C ( \nrm{ \phi_{\bf F}^n }_{H^{k+2} } 
   +  \nrm{ \phi_{\bf F}^{n-1} }_{H^{k+2} } ) 
   \le C \hat{C}_{k+2, \varepsilon} , 
   \label{est-time-1-4} 
\\
  & 
   \| \Delta ( \Delta_{h, (4)} \phi^\ell )_{\bf F}  \|_{H^k} 
   \le  \| ( \Delta_{h, (4)} \hat{\phi}^\ell )_{\bf F}  \|_{H^{k+2} }
   \le  \nrm{ \Delta \phi_{\bf F}^\ell  }_{H^{k+2} }  
     \nonumber 
\\
  &  \qquad \qquad  \quad 
  \le C \| \phi_{\bf F}^\ell \|_{H^{k+4} } 
   \le C \hat{C}_{k+4, \varepsilon} ,  \quad \ell = n, n+1 , 
   \label{est-time-1-5} 
\end{align}
In fact, inequality~\eqref{lem-1-4} has been applied again in the second step of (\ref{est-time-1-5}). In turn, a substitution of~\eqref{est-time-1-3}-\eqref{est-time-1-5} into \eqref{est-time-1-2} and \eqref{est-time-1-1} leads to  
\begin{equation} 
  \Big\| ( \frac32 \phi^{n+1}- 2 \phi^n + \frac12 \phi^{n-1} )_{\bf F} \Big\|_{H^k}  
   \le  D_{k, \varepsilon}^{(1)} \dt ,  \, \,   
   D_{k, \varepsilon}^{(1)} = C  \Big( 
   \varepsilon^{-1}  ( \hat{C}_{k+2, \varepsilon}^3 
    +  \hat{C}_{k+2, \varepsilon}  ) 
    + \varepsilon  \hat{C}_{k+4, \varepsilon}  \Big)  .  
   \label{est-time-1-6} 
    \end{equation} 
It is clear that $D_{k, \varepsilon}^{(1)}$ is a uniform-in-time constant, and it depends on $\varepsilon^{-1}$ in a polynomial form, since both $\hat{C}_{k+2, \varepsilon}$ and $\hat{C}_{k+4, \varepsilon}$ do. 

Meanwhile, a further estimate is needed to obtain a uniform estimate for $ \| ( \phi^{n+1}-  \phi^n )_{\bf F} \|_{H^k}$. Motivated by the triangular inequality 
\begin{equation} 
   \Big\| ( \frac32 \phi^{n+1}- 2 \phi^n + \frac12 \phi^{n-1} )_{\bf F} \Big\|_{H^k}  
   \ge \frac32 \| (  \phi^{n+1}-  \phi^n )_{\bf F} \|_{H^k}  
   - \frac12 \| ( \phi^n - \phi^{n-1} )_{\bf F} \|_{H^k} ,  \label{est-time-1-7-1} 
    \end{equation} 
we see that 
\begin{equation} 
   \| (  \phi^{n+1}-  \phi^n )_{\bf F} \|_{H^k}   
   \le \frac23 D_{k, \varepsilon}^{(1)} \dt + \frac13 \| ( \phi^n - \phi^{n-1} )_{\bf F} \|_{H^k} . 
   \label{est-time-1-7-2} 
    \end{equation} 
In turn, a recursive argument to the above inequality implies that 
\begin{equation} 
   \| (  \phi^{n+1}-  \phi^n )_{\bf F} \|_{H^k}   
   \le D_{k, \varepsilon}^{(1)} \dt + ( \frac13 )^n \| ( \phi^0 - \phi^{-1} )_{\bf F} \|_{H^k}  
   \le ( D_{k, \varepsilon}^{(1)} + C^* ) \dt ,  \quad \forall n \ge 0 , \label{est-time-1-7-3} 
\end{equation} 
in which $C^*$ is related to the initialization data~\eqref{scheme-BDF-CH-initial-1}. This finishes the proof of Theorem~\ref{stab-time-1}, by taking $\hat{D}_{k, \varepsilon} = D_{k, \varepsilon}^{(1)} + C^*$. 
\end{proof}

\subsection{A uniform estimate for $\nrm{ \phi^{n+1} - 2 \phi^n + \phi^{n-1} }_{H_h^k}$ } 

In addition to the first order temporal difference operator, the second order differential stencil term, namely $\phi^{n+1} - 2 \phi^n + \phi^{n-1}$, needs to be analyzed in the BDF2 scheme. A uniform-in-time estimate is presented in the following theorem. 

	\begin{thm} \label{stab-time-2}  
For the proposed numerical scheme~\eqref{scheme-BDF-CH-1}, if $\dt \le C \varepsilon^\frac12$,  we have 	
	\begin{equation}
\max_{1\le n \le M-1} \nrm{ \phi^{n+1}_{\bf F} 
- 2 \phi^n_{\bf F} + \phi^{n-1}_{\bf F} }_{H^k} 
 \le \hat{Q}_{k, \varepsilon} \dt^2 ,  \quad 
  \mbox{with} \, \,  \hat{Q}_{k, \varepsilon} 
  := C \varepsilon^{- m_k } ,
	\label{time-stability-2-bound}
	\end{equation}
where $m_k$ is a positive integer and $C>0$ is a constant independent of $h$, $\dt$, $T$, and $\varepsilon$. 
	\end{thm}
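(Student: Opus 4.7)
The plan is to derive an evolution identity for the second temporal difference
$\eta^{n+1} := \phi^{n+1} - 2\phi^n + \phi^{n-1}$
by subtracting the scheme~\eqref{scheme-BDF-CH-1} at consecutive time instants, and then to estimate the resulting right-hand side in $H^k$ via the uniform bounds already supplied by Theorems~\ref{stab-Hm-1} and \ref{stab-time-1}. Setting $\delta^m := \phi^m - \phi^{m-1}$ and subtracting the schemes at $t^{n+1}$ and $t^n$, a brief telescoping calculation collapses the left-hand side into
\begin{equation*}
\tfrac{3}{2}\eta^{n+1} - \tfrac{1}{2}\eta^n = \dt\,\Delta_{h,(4)}(\mu^{n+1} - \mu^n).
\end{equation*}
A direct expansion of $\mu^{n+1} - \mu^n$ then splits into four groups: the cubic nonlinear difference $\varepsilon^{-1}\delta^{n+1}\bigl((\phi^{n+1})^2 + \phi^{n+1}\phi^n + (\phi^n)^2\bigr)$, the concave linear contribution $-\varepsilon^{-1}(2\delta^n - \delta^{n-1})$, the surface diffusion $-\varepsilon\Delta_{h,(4)}\delta^{n+1}$, and the artificial Douglas--Dupont term $-A\varepsilon^{-2}\dt\,\Delta_{h,(4)}\eta^{n+1}$.

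Passing to the continuous Fourier extension and using~\eqref{lem-1-4} to control $\|(\Delta_{h,(4)}w)_{\bf F}\|_{H^k}$ by $\|\Delta w_{\bf F}\|_{H^k}$, I estimate the first three groups by polynomial-in-$\varepsilon^{-1}$ multiples of $\|\delta^\ell_{\bf F}\|_{H^{k+2}}$ or $\|\delta^\ell_{\bf F}\|_{H^{k+4}}$. For the cubic group a Sobolev algebra argument peels off a single factor of $\delta^{n+1}$, whose quadratic companion is controlled via Theorem~\ref{stab-Hm-1} by $\hat{C}_{k+2,\varepsilon}^2$. Theorem~\ref{stab-time-1} then contributes an $O(\dt)$ factor to every $\delta^\ell_{\bf F}$ in the required $H^{k+j}$ norm, and combining with the outer $\dt$ in front of $\Delta_{h,(4)}(\mu^{n+1}-\mu^n)$ yields an $O(\dt^2\varepsilon^{-m_k})$ contribution for these three groups.

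The fourth (regularization) group is proportional to $\eta^{n+1}$ itself, so I move it to the left-hand side, where its Fourier symbol is a nonnegative multiple of $|\xi|^{2j}$ added to the scalar $\tfrac{3}{2}$; the modified operator is then invertible and bounded below by $\tfrac{3}{2}$ uniformly in frequency, which produces a recursive inequality of the form
\begin{equation*}
\|\eta^{n+1}_{\bf F}\|_{H^k} \le \tfrac{1}{3}\|\eta^n_{\bf F}\|_{H^k} + C\dt^2\varepsilon^{-m_k}.
\end{equation*}
Iterating exactly as in the proof of Theorem~\ref{stab-time-1}, the geometric factor $1/3$ absorbs the sum and delivers the uniform bound $\hat{Q}_{k,\varepsilon}\dt^2$, provided the base case $\|\eta^1_{\bf F}\|_{H^k} = O(\dt^2)$ is established. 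For this base case, the initialization $\phi^{-1} = \phi^0 - \dt\Delta_{h,(4)}\mu_h^0$ combined with the BDF2 step at $n=0$ gives, after elimination, $\eta^1 = \tfrac{2}{3}\dt\,\Delta_{h,(4)}(\mu^1 - \mu_h^0)$, and repeating the same Sobolev bookkeeping for the single initialization step shows $\mu^1 - \mu_h^0 = O(\dt)$ in $H^{k+2}$, closing the induction.

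The main obstacle is the careful tracking of all $\varepsilon$-exponents through the cubic nonlinear expansion, so that every multiplicative factor remains a polynomial in $\varepsilon^{-1}$ rather than picking up an exponential dependence; a naive Gronwall application on the recursion would destroy exactly this property. The scaling $\dt \le C\varepsilon^{1/2}$ enters precisely here: it keeps the regularization coefficient $A\varepsilon^{-2}\dt^2$ uniformly bounded when moved to the left-hand side, and simultaneously guarantees that the contraction factor above stays sharply below one while all the constants on the right-hand side remain polynomial in $\varepsilon^{-1}$.
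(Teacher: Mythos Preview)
Your proposal is correct and follows essentially the same route as the paper: subtract the scheme at consecutive steps to obtain $\tfrac32\eta^{n+1}-\tfrac12\eta^n=\dt\,\Delta_{h,(4)}(\mu^{n+1}-\mu^n)$, bound $\mu^{n+1}-\mu^n$ in $H^{k+2}$ by $O(\dt)$ via Theorems~\ref{stab-Hm-1} and~\ref{stab-time-1}, and iterate the resulting $\tfrac13$-contraction. The only difference is your handling of the Douglas--Dupont term: you absorb $A\varepsilon^{-2}\dt^2\Delta_{h,(4)}^2\eta^{n+1}$ into the left-hand side as a nonnegative Fourier multiplier, whereas the paper simply leaves it on the right, writes $\eta^{n+1}=\delta^{n+1}-\delta^n$, and invokes Theorem~\ref{stab-time-1} once more; both choices yield the same recursion, and the paper's version replaces your multiplier argument by the plain reverse triangle inequality $\|\tfrac32\eta^{n+1}-\tfrac12\eta^n\|\ge\tfrac32\|\eta^{n+1}\|-\tfrac12\|\eta^n\|$. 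One small correction: your stated rationale for $\dt\le C\varepsilon^{1/2}$ is off, since once moved to the left the operator $\tfrac32 I+A\varepsilon^{-2}\dt^2\Delta_{h,(4)}^2$ is bounded below by $\tfrac32$ regardless of the size of $A\varepsilon^{-2}\dt^2$; the step-size restriction is really inherited from the hypotheses of Theorems~\ref{stab-Hm-1}--\ref{stab-time-1} and serves to keep all constants polynomial in $\varepsilon^{-1}$, not to secure the contraction.
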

	
\begin{proof}
Taking a difference between $\frac32 \phi_{\bf F}^{n+1} - 2 \phi_{\bf F}^n + \frac12 \phi_{\bf F}^{n-1}$ and $\frac32 \phi_{\bf F}^n - 2 \phi_{\bf F}^{n-1} + \frac12 \phi_{\bf F}^{n-2}$, evaluated by the numerical algorithm~\eqref{scheme-BDF-CH-1} at time steps $t^{n+1}$, $t^n$, respectively, gives   
 	\begin{align} 
	 & 
 \Big\| ( \frac32 \phi^{n+1} - \frac72 \phi^n + \frac52 \phi^{n-1}
  - \frac12 \phi^{n-2} )_{\bf F} \Big\|_{H^k} 
  = 
  \dt \| \left( \Delta_{h, (4)} ( \mu^{n+1} 
   - \mu^n ) \right)_{\bf F} \|_{H^k}  \nonumber 
\\
  \le& 
    \dt \| \Delta ( \mu^{n+1} - \mu^n )_{\bf F} \|_{H^k} ,  
    \quad \mbox{with} \label{est-time-2-1}  
\\ 
  & 
   \mu^{n+1} - \mu^n = 
   \varepsilon^{-1}  \Big( 
   ( \phi^{n+1} )^3  - ( \phi^n )^3    
 -  2 ( \phi^n - \phi^{n-1} ) +  ( \phi^{n-1} - \phi^{n-2} )  \Big)  \nonumber 
\\
  &  \qquad \qquad \quad 
 - \varepsilon \Delta_{h, (4)} ( \phi^{n+1} - \phi^n )  
 - A \varepsilon^{-2} \dt ( \Delta_{h, (4)} ( \phi^{n+1} - \phi^n )  
 - \Delta_{h, (4)} ( \phi^n - \phi^{n-1} ) )  , 
  \label{est-time-2-2}  
\\
  & 
  ( \phi^{n+1} )^3  - ( \phi^n )^3 
  =  ( (\phi^{n+1})^2 + \phi^{n+1} \phi^n + (\phi^n)^2 ) 
   ( \phi^{n+1} - \phi^n )  .  \label{est-time-2-3}   
    \end{align} 
It is observed that all the right-hand-side terms in~\eqref{est-time-2-2} contain a factor of $\phi^\ell - \phi^{\ell-1}$, $n-1 \le \ell \le n+1$. In turn, with the help of \eqref{LinftyHm-stability-bound} and \eqref{time-stability-1-bound}, the following estimate could be carefully derived; the technical details are skipped for the sake of brevity: 
\begin{equation} 
   \Big\| ( \frac32 \phi^{n+1} - \frac72 \phi^n + \frac52 \phi^{n-1}
  - \frac12 \phi^{n-2} )_{\bf F} \Big\|_{H^k} 
    \le  \dt \| \Delta ( \mu^{n+1} - \mu^n )_{\bf F} \|_{H^k} 
     \le Q_{k, \varepsilon}^{(1)} \dt^2 , \label{est-time-2-4}   
\end{equation} 
with the uniform-in-time constant $Q_{k, \varepsilon}^{(1)}$ dependent on $\varepsilon^{-1}$ in a polynomial form. 

To obtain a useful bound for $\phi^{n+1} - 2 \phi^n + \phi^{n-1}$, we begin with the following observation: 
\begin{equation} 
\begin{aligned} 
  & 
  \Big\| ( \frac32 \phi^{n+1} - \frac72 \phi^n + \frac52 \phi^{n-1}
  - \frac12 \phi^{n-2} )_{\bf F} \Big\|_{H^k}  
\\
  = & 
  \Big\|  \frac32 ( \phi^{n+1} - 2 \phi^n + \phi^{n-1} )_{\bf F} 
  - \frac12 ( \phi^n - 2 \phi^{n-1} + \phi^{n-2} )_{\bf F} \Big\|_{H^k} 
\\
  \ge & 
  \frac32 \|  ( \phi^{n+1} - 2 \phi^n + \phi^{n-1} )_{\bf F}  \|_{H^k} 
  - \frac12 \| ( \phi^n - 2 \phi^{n-1} + \phi^{n-2} )_{\bf F} \|_{H^k}  . 
\end{aligned} 
  \label{est-time-2-5}   
\end{equation} 
Its combination with~\eqref{est-time-2-4} results in 
\begin{equation} 
   \| (  \phi^{n+1} -  2 \phi^n + \phi^{n-1} )_{\bf F} \|_{H^k}   
   \le \frac23 Q_{k, \varepsilon}^{(1)} \dt^2 
   + \frac13 \| ( \phi^n - 2 \phi^{n-1} + \phi^{n-2} )_{\bf F} \|_{H^k} . 
   \label{est-time-2-6} 
    \end{equation} 
Similarly, a recursive argument to this inequality leads to 
\begin{equation} 
   \| (  \phi^{n+1} -  2 \phi^n + \phi^{n-1} )_{\bf F} \|_{H^k}   
   \le Q_{k, \varepsilon}^{(1)} \dt^2 + ( \frac13 )^{n-1} \| ( \phi^1 - 2 \phi^0 + \phi^{-1} )_{\bf F} \|_{H^k}  
   \le ( Q_{k, \varepsilon}^{(1)} + C^{**} ) \dt^2 , \label{est-time-2-7} 
\end{equation} 
for any $n \ge 1$, in which $C^{**}$ is related to the initialization data of $\phi^1$, $\phi^0$ and $\phi^{-1}$, with $\phi^{-1}$ gives by~\eqref{scheme-BDF-CH-initial-1}. This finishes the proof of Theorem~\ref{stab-time-2}, by taking $\hat{Q}_{k, \varepsilon} = Q_{k, \varepsilon}^{(1)} + C^{**}$. 
\end{proof}

\begin{rem} 
If the Crank-Nicolson-style temporal discretization is applied, the associated uniform-in-time estimates for $\| \phi^{n+1}_{\bf F} -  \phi^n_{\bf F} \|_{H^k}$ and $\| \phi^{n+1}_{\bf F} - 2 \phi^n_{\bf F} + \phi^{n-1}_{\bf F} \|_{H^k}$ could be carried out in a more straightforward way, because of the simpler temporal stencil of $\phi^{n+1} - \phi^n$; see the detailed analysis in~\cite{guo21}. In comparison, if a BDF temporal stencil is applied, a direct control of $\| \phi^{n+1}_{\bf F} -  \phi^n_{\bf F} \|_{H^k}$ is not available any more. Instead, we have to make use of the triangular inequalities~\eqref{est-time-1-7-1}, \eqref{est-time-2-5}, which come from the original numerical algorithm. These inequalities in turn enable us to derive the desired uniform-in-time bounds~\eqref{est-time-1-7-3}, \eqref{est-time-2-7}, using a recursive argument. This subtle technique could be applied to many other BDF-style, energy stable numerical schemes for various gradient flow models~\cite{cheng2019b, fengW18b, Hao2021, Hou2023, LiW18, Yuan2022a}, etc. 
\end{rem}

\subsection{Some established estimates for the exact solution and projection solution} 
	
For the exact solution $\Phi$ and the projection solution $\Phi_N$, the following estimates could be derived by the energy estimate and Sobolev analysis; also see the relevant reference work~\cite{guo21}. 

	\begin{thm} \cite{guo21} \label{stab-exact-1}  
The following estimates are valid for the exact solution $\Phi$: 
	\begin{align}
	  & 
\nrm{\Phi }_{L^\infty(0,T;H^{m_0})} 
 \le \hat{C}_{m_0, \varepsilon}^* 
  := C \varepsilon^{- k_{m0} } ,  
   \quad \forall m_0 \ge 1 , 
	\label{LinftyHm-stability-exact-bound} 
\\
  & 
  \nrm{\partial_t \Phi }_{L^\infty(0,T;H^k)} 
 \le \hat{D}_{k, \varepsilon}^* ,  \quad 
  \mbox{with} \, \,  \hat{D}_{k, \varepsilon}^* 
  := C \varepsilon^{- n_k } , 
   \quad \forall k \ge 0 , 
	\label{time-stability-1-exact-bound} 
\\
  & 
  \nrm{\partial_t^2 \Phi }_{L^\infty(0,T;H^k)} 
 \le \hat{Q}_{k, \varepsilon}^* ,  \quad 
  \mbox{with} \, \,  \hat{Q}_{k, \varepsilon}^* 
  := C \varepsilon^{- m_k } , 
   \quad \forall k \ge 0 , 
	\label{time-stability-2-exact-bound} 
\\
  	  & 
  \max_{0\le n \le M} \nrm{\Phi_N^n }_{H^{m_0}} 
 \le \hat{C}_{m_0, \varepsilon}^* 
  := C \varepsilon^{- k_{m0} } , 
	\label{LinftyHm-stability-projection-bound} 
\\
  & 
  \max_{0\le n \le M-1} 
   \nrm{\Phi_N^{n+1} - \Phi_N^n }_{H^k} 
 \le \hat{D}_{k, \varepsilon}^* \dt ,  \quad 
  \hat{D}_{k, \varepsilon}^* 
  := C \varepsilon^{- n_k } , 
	\label{time-stability-1-projection-bound} 
\\
  & 
  \max_{1\le n \le M-1} 
   \nrm{\Phi_N^{n+1} - 2 \Phi_N^n + \Phi_N^{n-1} }_{H^k} 
 \le \hat{Q}_{k, \varepsilon}^* \dt^2 ,  \quad 
   \hat{Q}_{k, \varepsilon}^* 
  := C \varepsilon^{- m_k } , 
	\label{time-stability-2-projection-bound} 
	\end{align}
where $k_{m0}$, $n_k$ and $m_k$ are given integers and $C$ is a constant independent of $h$, $\dt$, $T$, and $\varepsilon$. 
	\end{thm}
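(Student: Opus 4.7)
The plan is to prove the six estimates in a tiered fashion: first establish uniform-in-time spatial bounds on $\Phi$ itself with polynomial $\varepsilon^{-1}$ dependence, then deduce the temporal-derivative bounds directly from the PDE, and finally transfer everything to the Fourier projection $\Phi_N$ using elementary projection and calculus arguments.

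First I would prove \eqref{LinftyHm-stability-exact-bound} inductively in $m_0$. The base case $m_0=1$ is immediate from the energy dissipation identity $E(\Phi(\cdot,t)) \le E(\Phi_0)$, which controls $\|\nabla\Phi\|_{L^2}^2$ up to a factor $\varepsilon^{-1}$; combined with the mass conservation of $\Phi$, this gives the $H^1$ bound. For the inductive step, I would test the PDE $\Phi_t = \Delta\mu$ with $(-\Delta)^{m_0}\Phi$, expand $\mu = \varepsilon^{-1}(\Phi^3 - \Phi) - \varepsilon\Delta\Phi$, and use the algebra property of $H^{m_0}$ together with repeated Gagliardo--Nirenberg inequalities to dominate the cubic contribution. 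The critical point is that the surface-diffusion dissipation $\varepsilon\|\nabla\Delta^{m_0}\Phi\|_{L^2}^2$ absorbs the leading $\varepsilon^{-1}$ terms from the nonlinearity via Young's inequality, so a Gronwall argument on the resulting differential inequality gives a bound $C\varepsilon^{-k_{m_0}}$ that is polynomial in $\varepsilon^{-1}$ rather than exponentially singular. This is analogous to the discrete analysis already carried out in Theorems~\ref{stab-H2-1} and \ref{stab-Hm-1}.

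Next I would bound the time derivatives by reading them off the equation. Since $\partial_t\Phi = \Delta\mu$,
\begin{equation*}
\|\partial_t\Phi\|_{H^k} \le \|\mu\|_{H^{k+2}} \le C\bigl(\varepsilon^{-1}(\|\Phi\|_{H^{k+2}}^3 + \|\Phi\|_{H^{k+2}}) + \varepsilon\|\Phi\|_{H^{k+4}}\bigr),
\end{equation*}
which yields \eqref{time-stability-1-exact-bound} by inserting the just-proved spatial bound with $m_0 = k+4$. For \eqref{time-stability-2-exact-bound}, I differentiate the PDE once in time to obtain $\partial_t^2\Phi = \varepsilon^{-1}\Delta\bigl((3\Phi^2 - 1)\partial_t\Phi\bigr) - \varepsilon\Delta^2\partial_t\Phi$, then bound each piece using the algebra property of $H^k$ (for $k > d/2$) together with the spatial bound on $\Phi$ and the just-established bound on $\partial_t\Phi$, tracking that all $\varepsilon$-powers remain polynomial.

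Finally, the three projection bounds \eqref{LinftyHm-stability-projection-bound}--\eqref{time-stability-2-projection-bound} are immediate consequences. Since $\Phi_N = \mathcal{P}_N\Phi$ is an $L^2$-orthogonal truncation onto $\mathcal{B}^K$, we have $\|\Phi_N^n\|_{H^{m_0}} \le \|\Phi(\cdot,t^n)\|_{H^{m_0}}$, giving \eqref{LinftyHm-stability-projection-bound}. For the temporal differences, the fundamental theorem of calculus and its Taylor counterpart yield
\begin{equation*}
\|\Phi_N^{n+1} - \Phi_N^n\|_{H^k} \le \dt\,\|\partial_t\Phi_N\|_{L^\infty(0,T;H^k)} \le \dt\,\|\partial_t\Phi\|_{L^\infty(0,T;H^k)},
\end{equation*}
and similarly $\|\Phi_N^{n+1} - 2\Phi_N^n + \Phi_N^{n-1}\|_{H^k} \le \dt^2\|\partial_t^2\Phi\|_{L^\infty(0,T;H^k)}$, in each case using the commutativity of $\mathcal{P}_N$ with $\partial_t$ and its non-expansivity in $H^k$. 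The main obstacle is the first step: obtaining the $H^{m_0}$ bound with polynomial (not exponential) $\varepsilon$-dependence. A naive application of Gronwall to the energy inequality for $\|\Delta^{m_0}\Phi\|_{L^2}^2$ produces a factor $\exp(C\varepsilon^{-p}T)$; avoiding this requires carefully balancing the nonlinear Gagliardo--Nirenberg bounds against the diffusion dissipation so that the differential inequality takes the form $y'(t) \le C\varepsilon^{-q}$ rather than $y'(t) \le C\varepsilon^{-q}y(t)$, which is the same structural idea underlying the discrete arguments in Section~\ref{sec:Hm-stab}.
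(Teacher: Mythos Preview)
The paper does not supply its own proof of this theorem; it cites \cite{guo21} and remarks only that the estimates follow from ``the energy estimate and Sobolev analysis''. Your outline follows exactly that route and mirrors the discrete arguments of Theorems~\ref{stab-H2-1}--\ref{stab-time-2}, so the overall strategy agrees with what the paper indicates.

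There is, however, one genuine gap in your sketch of \eqref{LinftyHm-stability-exact-bound}. You say the goal is a differential inequality of the form $y'(t) \le C\varepsilon^{-q}$, but integrating that gives $y(T) \le y(0) + C\varepsilon^{-q}T$, which still depends on $T$ and therefore does not deliver the $T$-independent constant the theorem asserts. The correct mechanism --- clearly visible in the discrete proof of Theorem~\ref{stab-H2-1}, steps \eqref{H2-est-6-7}--\eqref{H2-est-6-11} --- is to retain a positive fraction of the surface-diffusion dissipation $\varepsilon\|(-\Delta)^{(m_0+2)/2}\Phi\|^2$ \emph{after} the nonlinear terms have been absorbed, and then convert it into a damping term via the Poincar\'e-type inequality $\|(-\Delta)^{(m_0+2)/2}\Phi\|^2 \ge c\,\|(-\Delta)^{m_0/2}\Phi\|^2$ (valid since $(-\Delta)^{m_0/2}\Phi$ has zero mean for $m_0\ge 1$). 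This produces $y' + c\varepsilon\, y \le C\varepsilon^{-q}$, whose solutions are bounded uniformly in $t$ by $\max\bigl(y(0),\, c^{-1}C\varepsilon^{-q-1}\bigr)$. Without this damping step the bound is not uniform in $T$. Your treatment of \eqref{time-stability-1-exact-bound}--\eqref{time-stability-2-projection-bound} is fine once \eqref{LinftyHm-stability-exact-bound} is secured.
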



	\section{Error analysis with a refined convergence constant}  
	\label{sec:convergence}

With the help of the preliminary estimates for the numerical solution, established in the last section, we are able to establish an improved convergence analysis.

\subsection{Consistency analysis and the numerical error evolutionary equation} 
\label{subsec:consistency}

For the projection solution $\Phi_N$, the following estimate is valid: 
\begin{equation} 
  \partial_t \Phi_N = \Delta \left( 
  \varepsilon^{-1} ( \Phi_N^3 - \Phi_N ) 
  - \varepsilon \Delta \Phi_N  \right) 
  + \tau_0 ,  \quad \mbox{with} \, \, 
  \nrm{\tau_0 (t)}_{H^{-1}_{\rm per}} \le C h^m \varepsilon^{- j_1} , 
  \label{consistency-1} 
\end{equation} 
in which the projection approximation estimates~\eqref{projection-est-1}, \eqref{projection-est-4}, combined with the exact CH equation~\eqref{CH equation}, have to be applied in the derivation. 

If the fourth order long stencil difference approximation is taken in space, the following estimate is available: 
\begin{align} 
  & 
  \partial_t \Phi_N = \Delta_{h, (4)} \left( 
  \varepsilon^{-1} ( \Phi_N^3 - \Phi_N ) 
  - \varepsilon \Delta_{h, (4)} \Phi_N  \right) 
  + \tau_1 ,   \mbox{with}  \quad  
  \| \tau_1 (t) \|_{-1, h} \le C h^4 \varepsilon^{- j_2}  . 
  \nonumber  
\end{align} 
Moreover, by taking a modified BDF2 temporal discretization, we have 
\begin{align} 
  \frac{\frac32 \Phi_N^{n+1} - 2 \Phi_N^n + \frac12 \Phi_N^{n-1}}{\dt}  
  = & \Delta_{h, (4)} \Big( 
  \varepsilon^{-1} (  ( \Phi_N^{n+1} )^3 - \hat{\Phi}_N^{n+1}  )  
  - \varepsilon \Delta_{h, (4)}  \Phi_N^{n+1}    \nonumber 
\\
  &  \qquad \quad 
  + A \varepsilon^{-2} \dt \Delta_{h, (4)} ( \Phi_N^{n+1} - \Phi_N^n ) \Big) 
  + \tau^{n+1} ,    
   \label{consistency-3}
\\
  \mbox{with}  \quad  &
  \hat{\Phi}_N^{n+1} = 2 \Phi_N^n - \Phi_N^{n-1} , \,  \, \,  
  \quad \| \tau^{n+1} \|_{-1, h} 
  \le C (\dt^2 + h^4 )  \varepsilon^{- j_3} . 
  \nonumber  
\end{align}

In turn, subtracting the consistency estimate~\eqref{consistency-3} from the numerical scheme~\eqref{scheme-BDF-CH-1} gives
\begin{align}  
  \frac{\frac32 e^{n+1} - 2 e^n + \frac12 e^{n-1}}{\dt}  
   =& \Delta_{h, (4)} \Big( 
  \varepsilon^{-1} (  ( \Phi_N^{n+1} )^3 - ( \phi^{n+1} )^3 
   - \hat{e}^{n+1}  ) 
  - \varepsilon \Delta_{h, (4)}  e^{n+1}      
  \nonumber 
\\
  & 
  + A \varepsilon^{-2} \dt \Delta_{h, (4)} ( e^{n+1} - e^n ) \Big) 
  + \tau^{n+1} ,  \label{eq:error.1} 
\\ 
  \mbox{with}  \quad &  
  \hat{e}^{n+1} = 2 e^n - e^{n-1} ,  \quad 
  \| \tau^{n+1} \|_{-1, h} 
  \le C ( \dt^2 + h^4 )  \varepsilon^{- j_4} . 
   \nonumber 
\end{align} 
	
Meanwhile, it is noticed that the numerical error functions are discrete, and they are only evaluated at the numerical grid points.  Using the tool of~\eqref{def:Fourier} and \eqref{def:extension}, we make a continuous extension of these discrete error functions: 
\begin{eqnarray} 
  e_{\bf F}^k = \Phi_N^k - \phi_{\bf F}^k ,  \quad  
    \hat{e}_{\bf F}^{k+1} 
  = 2 e_{\bf F}^k -  e_{\bf F}^{k-1} . 
   \label{error function-2} 
\end{eqnarray}

Meanwhile, a few preliminary estimates for the numerical error term will be needed in the later analysis. By making a comparison between~\eqref{LinftyHm-stability-bound}, \eqref{time-stability-1-bound}, \eqref{time-stability-2-bound} and \eqref{LinftyHm-stability-projection-bound}-\eqref{time-stability-2-projection-bound}, the following results become straightforward.

	\begin{lem}
	\label{lemma:prelim est}
For the numerical error function, we have 
	\begin{align}
		  & 
  \max_{0\le n \le M} \| e_{\bf F}^n \|_{H^{m_0}} 
 \le \hat{C}_{m_0, \varepsilon}^{**} 
  := C \varepsilon^{- k_{m0} } , 
	\label{convergence-prelim est-1} 
\\
  & 
  \max_{0\le n \le M-1} \| e_{\bf F}^{n+1} - e_{\bf F}^n \|_{H^k} 
 \le \hat{D}_{k, \varepsilon}^{**} \dt ,  \quad 
  \hat{D}_{k, \varepsilon}^{**} 
  := C \varepsilon^{- n_k } , 
	\label{convergence-prelim est-2} 
\\
  & 
  \max_{1\le n \le M-1} 
   \| e_{\bf F}^{n+1} - 2 e_{\bf F}^n + e_{\bf F}^{n-1} \|_{H^k} 
 \le \hat{Q}_{k, \varepsilon}^{**} \dt^2 ,  \quad 
   \hat{Q}_{k, \varepsilon}^{**} 
  := C \varepsilon^{- m_k } , 
	\label{convergence-prelim est-3}  
	\end{align}
for any $m_0 \ge 1$ and $k \ge 0$, where $k_{m0}$, $n_k$ and $m_k$ are given integers and $C$ is a constant independent of $h$, $\dt$, $T$, and $\varepsilon$. 
	\end{lem}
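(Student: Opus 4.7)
The plan is to exploit the decomposition $e_{\bf F}^k = \Phi_N^k - \phi_{\bf F}^k$ and combine the bounds already established separately for the projection solution $\Phi_N$ and for the continuous extension $\phi_{\bf F}$ of the numerical solution. Since all three estimates in the lemma are controlled by a triangle inequality applied to this splitting, no new energy-type estimate is required; every required ingredient sits in Theorems~\ref{stab-H2-1}, \ref{stab-Hm-1}, \ref{stab-time-1}, \ref{stab-time-2}, and \ref{stab-exact-1}, with the definitions of $\hat C_{m_0,\varepsilon}^{**}$, $\hat D_{k,\varepsilon}^{**}$, $\hat Q_{k,\varepsilon}^{**}$ obtained simply by adding the two corresponding constants.

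For \eqref{convergence-prelim est-1}, I would write
\begin{equation*}
\| e_{\bf F}^n \|_{H^{m_0}} \le \| \Phi_N^n \|_{H^{m_0}} + \| \phi_{\bf F}^n \|_{H^{m_0}} ,
\end{equation*}
and bound the first term by \eqref{LinftyHm-stability-projection-bound} and the second by \eqref{LinftyHm-stability-bound}. Both are of order $\varepsilon^{-k_{m_0}}$ with polynomial dependence on $\varepsilon^{-1}$, so setting $\hat C_{m_0,\varepsilon}^{**}=\hat C_{m_0,\varepsilon}+\hat C_{m_0,\varepsilon}^{*}$ yields the claim uniformly in $0\le n\le M$.

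For \eqref{convergence-prelim est-2}, the first-order temporal difference $e_{\bf F}^{n+1}-e_{\bf F}^n$ splits analogously:
\begin{equation*}
\| e_{\bf F}^{n+1} - e_{\bf F}^n \|_{H^k} \le \| \Phi_N^{n+1} - \Phi_N^n \|_{H^k} + \| \phi_{\bf F}^{n+1} - \phi_{\bf F}^n \|_{H^k} .
\end{equation*}
The projection part is controlled by \eqref{time-stability-1-projection-bound}, and the numerical part by \eqref{time-stability-1-bound} from Theorem~\ref{stab-time-1} (which was obtained via the recursive triangular argument in \eqref{est-time-1-7-2}--\eqref{est-time-1-7-3}). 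Both are $O(\dt)$ with polynomial $\varepsilon^{-1}$ dependence, and one takes $\hat D_{k,\varepsilon}^{**}=\hat D_{k,\varepsilon}+\hat D_{k,\varepsilon}^{*}$.

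For \eqref{convergence-prelim est-3}, the identical triangle-inequality splitting on $\phi^{n+1}-2\phi^n+\phi^{n-1}$ reduces the task to estimating $\|\Phi_N^{n+1}-2\Phi_N^n+\Phi_N^{n-1}\|_{H^k}$ via \eqref{time-stability-2-projection-bound} and $\|\phi_{\bf F}^{n+1}-2\phi_{\bf F}^n+\phi_{\bf F}^{n-1}\|_{H^k}$ via \eqref{time-stability-2-bound} from Theorem~\ref{stab-time-2}, then defining $\hat Q_{k,\varepsilon}^{**}=\hat Q_{k,\varepsilon}+\hat Q_{k,\varepsilon}^{*}$. There is no genuine obstacle here: the substantive analytical work—namely, propagating $H^{m_0}$-regularity through the BDF2 stencil and running the recursive arguments that gave the uniform-in-time bounds on the first- and second-order temporal differences of $\phi_{\bf F}$—was already carried out in the previous subsections. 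This lemma is, in effect, a bookkeeping corollary assembling those estimates with their PDE counterparts from Theorem~\ref{stab-exact-1}.
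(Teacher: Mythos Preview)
Your proposal is correct and follows exactly the approach the paper indicates: the paper states that the lemma is ``straightforward'' by comparing \eqref{LinftyHm-stability-bound}, \eqref{time-stability-1-bound}, \eqref{time-stability-2-bound} with \eqref{LinftyHm-stability-projection-bound}--\eqref{time-stability-2-projection-bound}, which is precisely the triangle-inequality splitting $e_{\bf F}^n=\Phi_N^n-\phi_{\bf F}^n$ you carry out. (One typo: in the third paragraph you wrote ``splitting on $\phi^{n+1}-2\phi^n+\phi^{n-1}$'' where you meant $e_{\bf F}^{n+1}-2e_{\bf F}^n+e_{\bf F}^{n-1}$.)
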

	
In fact, these preliminary bounds for the numerical error function do not rely on the error and convergence analysis, and all the constants are final time independent.

\subsection{Review of the spectrum estimate for the linearized Cahn-Hilliard operator} 

The following linearized spectrum estimate has been established in~\cite{alikakos94, alikakos93, chenx94, feng04}. We just recall this result. 

\begin{prop}  \label{Feng04NM}  (\cite{feng04}) 
  There exists $0 < \varepsilon_0 << 1$ and another positive constant $C_0$ such that the principle eigenvalue of the linearized Cahn-Hilliard operator satisfies 
\begin{eqnarray} 
  \lambda_{CH} := \inf_{\psi \in H^1, \psi \ne 0} 
  \frac{ \varepsilon^{-1}  \left( 
  \left( 3  \Phi^2 (t) - 1 \right) \psi , \psi \right) 
  + \varepsilon \nrm{ \nabla \psi }^2 } 
   { \nrm{ \psi }_{H^{-1} }^2 }  \ge - C_0 , 
   \label{spectrum-est-1} 
\end{eqnarray} 
for any $t \ge 0$, $0 < \varepsilon < \varepsilon_0$, where $\Phi$ is the exact solution to the Cahn-Hilliard problem. 
\end{prop}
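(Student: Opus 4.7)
My plan for establishing Proposition~\ref{Feng04NM}: This is a classical PDE result attributed to Alikakos--Bates--Chen and X.~Chen, with the $H^{-1}$-version we need recorded in Feng--Prohl; I would follow their strategy. The basic object is the linearized Allen--Cahn operator $L_{AC} := -\varepsilon \Delta + \varepsilon^{-1}(3\Phi^2-1)$, with quadratic form $Q(\psi) := \varepsilon \| \nabla \psi \|^2 + \varepsilon^{-1}( (3\Phi^2-1)\psi, \psi )$. The key is to track how the ``dangerous'' region near the interface $\{ \Phi \approx 0\}$ contributes negatively, then show the $H^{-1}$ denominator provides just enough weight to absorb this contribution uniformly in $\varepsilon$.

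First, I would invoke the underlying Allen--Cahn spectrum estimate $Q(\psi) \ge -C_1 \| \psi \|_{L^2}^2$ with $C_1$ independent of $\varepsilon$, proved in the references above by matched asymptotic expansion around the interface. The building block is the one-dimensional profile operator $-\partial_\xi^2 + (3\Phi_0^2 -1)$ with $\Phi_0(\xi) = \tanh(\xi/\sqrt 2)$, whose bottom eigenvalue is zero with eigenfunction $\Phi_0'(\xi)$, and which has a strict spectral gap of size $\gamma_0 > 0$ above the zero mode. Transferring this one-dimensional result to a thin tubular neighborhood of the smooth interface $\Gamma(t) = \{\Phi(\cdot,t)=0\}$ via Fermi coordinates yields an approximate zero mode $\eta_\varepsilon(\mathbf{x}) \approx \Phi_0'(\mathrm{dist}(\mathbf{x},\Gamma)/\varepsilon)$.

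Second, I would decompose $\psi = \alpha \eta_\varepsilon + \psi^\perp$ with $\langle \eta_\varepsilon, \psi^\perp \rangle_{L^2} = 0$. On the orthogonal complement, the spectral gap yields the improved bound $Q(\psi^\perp) \ge c_0(\varepsilon \|\nabla \psi^\perp\|^2 + \varepsilon^{-1}\|\psi^\perp\|_{L^2}^2)$ for some $c_0>0$ independent of $\varepsilon$. On the parallel mode, $Q(\alpha \eta_\varepsilon) = O(\alpha^2)$ only (the loss coming from the interface curvature), which on its own is catastrophic in $L^2$; however, because $\eta_\varepsilon$ is concentrated on an $O(\varepsilon)$-width layer along a codimension-one surface, a direct Fourier/concentration computation gives $\|\eta_\varepsilon\|_{H^{-1}}^2 \gtrsim \varepsilon \|\eta_\varepsilon\|_{L^2}^2$, so the scaling $\|\alpha \eta_\varepsilon\|_{H^{-1}}^2 \gtrsim \varepsilon \alpha^2 \|\eta_\varepsilon\|_{L^2}^2$ compensates the negative contribution from the parallel component with an $\varepsilon$-independent constant.

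Third, to glue the two pieces, I would use the interpolation $\|\psi\|_{L^2}^2 \le \|\psi\|_{H^{-1}}\|\nabla \psi\|$ together with Young's inequality $ab \le \tfrac{1}{4\delta}a^2 + \delta b^2$, choosing $\delta$ small enough that the resulting $\varepsilon \|\nabla \psi\|^2$ term can be absorbed into $Q(\psi)$ via Step~2, while leaving a clean lower bound proportional to $-\|\psi\|_{H^{-1}}^2$. I would also use cross-term estimates between $\alpha \eta_\varepsilon$ and $\psi^\perp$ to control the bilinear contributions arising from the fact that $\eta_\varepsilon$ is only an approximate (not exact) eigenfunction of $L_{AC}$. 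The main obstacle, and the place where the original works invest most effort, is Step~2: establishing the spectral gap on the orthogonal complement with an $\varepsilon$-independent constant. This requires geometric hypotheses on $\Gamma(t)$ (uniform $C^{2,\alpha}$-regularity, bounded curvature, no self-intersection, and distance from $\partial \Omega$), together with careful cut-off and partition-of-unity arguments to patch the one-dimensional profile result across the interface while controlling the errors from curvature and from the outer region where $\Phi \approx \pm 1$.
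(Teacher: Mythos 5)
The paper does not actually prove Proposition~\ref{Feng04NM}: it is quoted verbatim from the cited literature (\cite{feng04}, building on \cite{alikakos94, alikakos93, chenx94}), and the text preceding it says explicitly ``we just recall this result.'' So there is no in-paper argument to compare yours against; the only fair comparison is with the proofs in those references. Measured against that, your outline correctly identifies the classical strategy --- the one-dimensional profile operator with ground state $\Phi_0'$, the decomposition $\psi = \alpha\eta_\varepsilon + \psi^\perp$, the spectral gap on the orthogonal complement, and the observation that the layer-concentrated near-null mode has $\| \eta_\varepsilon \|_{H^{-1}}^2 \gtrsim \varepsilon \| \eta_\varepsilon \|_{L^2}^2$, which is exactly what lets the $H^{-1}$ denominator absorb the dangerous direction. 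This is genuinely the architecture of Chen's proof, and your closing remarks on the geometric hypotheses needed for $\Gamma(t)$ are apt.

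That said, what you have written is a proof plan, not a proof, and there is one scaling point you must tighten before the bookkeeping closes. As stated, your parallel-mode bound $Q(\alpha\eta_\varepsilon) = O(\alpha^2)$ (i.e.\ $\gtrsim -C\alpha^2\|\eta_\varepsilon\|_{L^2}^2$), combined with $\|\alpha\eta_\varepsilon\|_{H^{-1}}^2 \gtrsim \varepsilon\,\alpha^2\|\eta_\varepsilon\|_{L^2}^2$, only yields $Q(\alpha\eta_\varepsilon) \gtrsim -C\varepsilon^{-1}\|\alpha\eta_\varepsilon\|_{H^{-1}}^2$, which is \emph{not} the $\varepsilon$-uniform constant the proposition asserts. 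What is actually needed (and what the references prove) is that the negative contribution of the near-null mode is of relative order $\varepsilon$, i.e.\ $Q(\alpha\eta_\varepsilon) \ge -C\varepsilon\,\alpha^2\|\eta_\varepsilon\|_{L^2}^2$, because the exact one-dimensional zero mode contributes nothing and the curvature/outer corrections enter only at order $\varepsilon$. With that sharper input the two factors of $\varepsilon$ match and the constant $C_0$ comes out independent of $\varepsilon$. The cross terms between $\alpha\eta_\varepsilon$ and $\psi^\perp$, which you mention only in passing, need the same order-$\varepsilon$ smallness; this is where most of the technical effort in \cite{chenx94} lives, and a complete write-up cannot treat it as a footnote.
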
 


\subsection{Proof of Theorem~\ref{thm:convergence} } 
\label{subsec:error analysis} 

Since the numerical error function is mean free, i.e., $\overline{e^k} = 0$, its $\| \cdot \|_{-1, h}$ norm is well defined. Taking a discrete inner product with~\eqref{eq:error.1} by $2 (-\Delta_{h, (4)})^{-1} e^{n+1}$ yields   
\begin{align}
  &   
    \frac{1}{\dt} \Big\langle 3 e^{n+1} - 4 e^n + e^{n-1} ,  (-\Delta_{h, (4)})^{-1} e^{n+1}  \Big\rangle 
   +  2 \varepsilon^{-1} \langle  
    ( ( \Phi_N^{n+1} )^3 - ( \phi^{n+1} )^3 
   - \hat{e}^{n+1}  ) , e^{n+1}  \rangle    \nonumber 
\\
  & 
   - 2 \varepsilon \langle \Delta_{h, (4)} e^{n+1} , e^{n+1} \rangle 
   - 2 A \varepsilon^{-2} \langle \Delta_{h, (4)} ( e^{n+1} - e^n ) , e^{n+1} \rangle     
  = 2 \langle \tau^{n+1} ,  (-\Delta_{h, (4)})^{-1} e^{n+1} \rangle , 
   \label{convergence-est-1}
\end{align} 
in which the summation by parts formula has been repeatedly applied. 

The analysis for the temporal differentiation term is similar to~\eqref{H2-est-2}:   
\begin{align} 
  &  
   \langle 3 e^{n+1} - 4 e^n + e^{n-1} , (-\Delta_{h, (4)} )^{-1} e^{n+1} \rangle  
   = \langle   3 e^{n+1} - 4 e^n + e^{n-1}  ,  e^{n+1} \rangle_{-1, h}    
   \nonumber 
\\
  =&
     \frac12 ( \| e^{n+1} \|_{-1, h}^2 - \| e^n \|_{-1, h}^2 
   + \| 2 e^{n+1} - e^n \|_{-1, h}^2 
   - \| 2 e^n - e^{n-1}  \|_{-1, h}^2  \nonumber 
 \\
   & \quad 
   +  \| e^{n+1} - 2 e^n + e^{n-1}  \|_{-1, h}^2 ) . \label{convergence-est-2} 
\end{align} 

The local truncation error term could be bounded in a straightforward way: 
\begin{equation} 
\begin{aligned} 
  2 \langle \tau^{n+1} ,  (-\Delta_{h, (4)})^{-1} e^{n+1} \rangle 
  = & 2 \langle \tau^{n+1} ,  e^{n+1} \rangle_{-1, h}  
  \le 2 \| \tau^{n+1} \|_{-1, h} \cdot \|  e^{n+1} \|_{-1, h}  
\\
  \le & \| \tau^{n+1} \|_{-1, h}^2 + \|  e^{n+1} \|_{-1, h}^2 .  
\end{aligned} 
    \label{convergence-est-3}
\end{equation} 

For the surface diffusion term, we apply the summation-by-parts formula~\eqref{summation-1}, as well as inequality~\eqref{lem-2-1} (in Lemma~\ref{lemma:2}), and get 
\begin{align} 
   & 
   - \langle \Delta_{h, (4)} e^{n+1} , e^{n+1} \rangle  
   = \| \nabla_{h, (4)} e^{n+1} \|_2^2 ,  \label{convergence-est-4-1} 
\\
  & 
   \| \nabla e_{\bf F}^{n+1} \|^2 
   - \| \nabla_{h, (4)} e^{n+1} \|_2^2 \le C h^4  \| e_{\bf F}^{n+1} \|_{H^3}^2 . 
  \label{convergence-est-4-2}
\end{align} 
On the other hand, the following Sobolev interpolation inequality could be used to obtain a sharper bound on the right hand side: 
\begin{equation} 
  \| e_{\bf F}^{n+1} \|_{H^3}^2 
  \le  C  \| \nabla \Delta e_{\bf F}^{n+1} \|^2 
  \le C  \| e_{\bf F}^{n+\hf} \|_{H^7} 
   \cdot \| e_{\bf F}^{n+\hf} \|_{H^{-1} }  
   \le  
   C  \hat{C}_{7, \varepsilon}^{**}  \| e_{\bf F}^{n+\hf} \|_{H^{-1} } , 
   \label{convergence-est-4-3}
\end{equation}
with the preliminary estimate~\eqref{convergence-prelim est-1} (in Lemma~\ref{lemma:prelim est}) applied in the last step. Then we obtain 
\begin{align} 
   & 
     \| \nabla e_{\bf F}^{n+1} \|^2 
   - \| \nabla_{h, (4)} e^{n+1} \|_2^2  
  \le C \hat{C}_{7, \varepsilon}^{**} h^4 
   \nrm{ e_{\bf F}^{n+1} }_{H^{-1} }   
    \le  \| e_{\bf F}^{n+1} \|_{H^{-1} }^2 
    +  C h^8 ( \hat{C}_{7, \varepsilon}^{**} )^2 , \label{convergence-est-4-4} 
\\
  &  \mbox{so that}  \quad 
   - \langle \Delta_{h, (4)} e^{n+1} , e^{n+1} \rangle   
   \ge \| \nabla e_{\bf F}^{n+1} \|^2  -  C h^8 ( \hat{C}_{7, \varepsilon}^{**} )^2 . 
   \label{convergence-est-4-5} 
\end{align} 

Similar to~\eqref{H2-est-3}, the artificial regularization inner product term satisfies the following triangular equality:  
	\begin{equation}
- 2 \langle \Delta_{h, (4)} ( e^{n+1} - e^n ) , e^{n+1}  \rangle   
=  \| \nabla_{h, (4)} e^{n+1} \|_2^2 - \| \nabla_{h, (4)} e^n \|_2^2  
 +    \| \nabla_{h, (4)} ( e^{n+1} - e^n)  \|_2^2   .    
	\label{convergence-est-5}
	\end{equation}
	
For the concave term, we begin with the following observation: 
\begin{equation} 
\begin{aligned} 
  \langle  - \hat{e}^{n+1} , e^{n+1}  \rangle  
  = & - \| e^{n+1} \|_2^2 + \langle  e^{n+1} - 2 e^n + e^{n-1} , e^{n+1}  \rangle  
\\
  = & 
    - \| e_{\bf F}^{n+1} \|^2 
  + ( e_{\bf F}^{n+1} - 2 e_{\bf F}^n + e_{\bf F}^{n-1} , e_{\bf F}^{n+1}  ) , 
\end{aligned} 
  \label{convergence-est-6-1}
\end{equation} 
in which identity~\eqref{lem-3-1} (in Lemma~\ref{lemma:3}) has been applied in the second step. Meanwhile, the second term on the right hand side could be analyzed as follows: 
\begin{equation} 
\begin{aligned} 
  | ( e_{\bf F}^{n+1} - 2 e_{\bf F}^n + e_{\bf F}^{n-1} , e_{\bf F}^{n+1}  ) | 
  \le & \| \nabla ( e_{\bf F}^{n+1} - 2 e_{\bf F}^n + e_{\bf F}^{n-1} ) \| 
  \cdot \| e_{\bf F}^{n+1} \|_{H^{-1}}  
\\
  \le & 
  \hat{Q}_{1, \varepsilon}^{**} \dt^2 \cdot \| e_{\bf F}^{n+1} \|_{H^{-1}}   
  \le \frac{\varepsilon}{2} \| e_{\bf F}^{n+1} \|_{H^{-1}}^2 
  + \frac12 \varepsilon^{-1} ( \hat{Q}_{1, \varepsilon}^{**} )^2 \dt^4 , 
\end{aligned} 
  \label{convergence-est-6-2}
\end{equation} 
where the second step comes from the preliminary estimate~\eqref{convergence-prelim est-3} (in Lemma~\ref{lemma:prelim est}). Then we arrive at a bound for the concave term: 
\begin{equation} 
   2 \varepsilon^{-1} \langle  - \hat{e}^{n+1} , e^{n+1}  \rangle  
   \ge - 2 \varepsilon^{-1} \| e_{\bf F}^{n+1} \|^2 
   -  \| e_{\bf F}^{n+1} \|_{H^{-1}}^2 
  - \varepsilon^{-2} ( \hat{Q}_{1, \varepsilon}^{**} )^2 \dt^4 . 
   \label{convergence-est-6-3}
\end{equation}

The rest work is focused on the nonlinear estimate. The following a-priori assumption is made. 

\noindent
{\bf An a-priori assumption up to time step $t^n$.}  \, \, We assume a-priori that the numerical error function has the following convergence order, 
at time steps up to $t^n$:     
\begin{equation} 
  \| e^\ell \|_{-1, h}  \le \dt^{\frac{15}{8}} + h^{\frac{15}{4}} , \quad 
     \ell \le n .    
  \label{convergence-a priori-1}
\end{equation} 

First, the nonlinear error inner product could be expanded as 
\begin{equation} 
  I_1^{(d)} := \langle  ( \Phi_N^{n+1} )^3 - ( \phi^{n+1} )^3 , e^{n+1}  \rangle 
    =  \Big\langle  
   \Big( ( \Phi_N^{n+1} )^2 + \Phi_N^{n+1} \phi^{n+1}  
   + ( \phi^{n+1} )^2 \Big) ,  ( e^{n+1} )^2 \Big\rangle . 
     \label{convergence-est-7}
\end{equation} 
To bound this nonlinear inner product, we make use of~\eqref{lem-3-2} (in Lemma~\ref{lemma:3}) to control its difference with its continuous version: 
\begin{align} 
  & 
  I_1 := \Big(   
   \Big( ( \Phi_N^{n+1} )^2 + \Phi_N^{n+1} \phi_{\bf F}^{n+1}  
   + ( \phi_{\bf F}^{n+1} )^2 \Big) , ( e_{\bf F}^{n+1} )^2 \Big) ,  
       \label{convergence-est-8-1}
\\
  & 
  | I_1^{(d)} - I_1 |  
  \le C h^8  \Big(   
   \| ( \Phi_N^{n+1} )^2 + \Phi_N^{n+1} \phi_{\bf F}^{n+1}  
   + ( \phi_{\bf F}^{n+1} )^2  \|_{H^8} 
   \cdot \|  ( e_{\bf F}^{n+1} )^2 \|_{H^8}  
   \Big)   \nonumber 
\\
  &  \quad 
   \le C h^8  \Big( 
    \Big( \nrm{ \Phi_N^{n+1} }_{H^8}^2 
   +  \nrm{ \phi_{\bf F}^{n+1} }_{H^8}^2 \Big) 
   \cdot  \nrm{ e_{\bf F}^{n+1} }_{H^8}^2  \Big) 
   \nonumber 
\\
  & \quad 
  \le C h^8  \Big( 
      \hat{C}_{8, \varepsilon}^4
      + ( \hat{C}_{8, \varepsilon}^* )^4
      + ( \hat{C}_{8, \varepsilon}^{**} )^4  \Big)  
    \le  C h^8  \varepsilon^{-4 k_8} ,  
    \label{convergence-est-8-2}
\end{align} 
in which the established estimates (\ref{LinftyHm-stability-bound}),  (\ref{LinftyHm-stability-projection-bound})  and (\ref{convergence-prelim est-1}) have been repeatedly applied. 

To estimate the continuous inner product $I_1$ in~\eqref{convergence-est-8-1}, the following identity is observed:  
\begin{equation} 
   ( \Phi_N^{n+1} )^2 
   + \Phi_N^{n+1} \phi_{\bf F}^{n+1}  
   + ( \phi_{\bf F}^{n+1} )^2  
     = 3 ( \Phi_N^{n+1} )^2 - 3 \Phi_N^{n+1} e_{\bf F}^{n+1}  
   + ( e_{\bf F}^{n+1} )^2 .   
   \label{convergence-est-8-3}
\end{equation} 
This in turn gives 
\begin{equation} 
  I_1 \ge  
   3 \Big( ( \Phi_N^{n+1} )^2  ,  ( e_{\bf F}^{n+1} )^2 \Big)  
   + {\cal IE} ,  \quad {\cal IE} = 
   - 3 \Big( \Phi_N^{n+1} e_{\bf F}^{n+1} , 
      ( e_{\bf F}^{n+1} )^2 \Big) . 
      \label{convergence-est-8-4} 
\end{equation}
In terms of the additional nonlinear error inner product term, an application of H\"older inequality, combined with Sobolev inequality, indicates that 
\begin{equation} 
  \left| {\cal IE} \right| 
  \le 3  \| \Phi_N^{n+1} \|_{L^\infty}  
      \cdot \| e_{\bf F}^{n+1} \|_{L^3}^3 
  \le C  \| \Phi_N^{n+1} \|_{H^2}  
      \cdot \| e_{\bf F}^{n+1} \|_{L^3}^3  
      \le  C \hat{C}_{2, \varepsilon}^*   
       \| e_{\bf F}^{n+1} \|_{L^3}^3 , 
      \label{convergence-est-8-5} 
\end{equation}
with estimate~\eqref{LinftyHm-stability-projection-bound} applied in the last step. To control $\| e_{\bf F}^{n+1} \|_{L^3}$, we see that 
\begin{align}  
  e_{\bf F}^{n+1} 
   =& \hat{e}_{\bf F}^{n+1}  
    + (  e_{\bf F}^{n+1} - 2 e_{\bf F}^n + e_{\bf F}^{n-1} )  ,  \quad 
    \mbox{so that}  \nonumber  
\\ 
  \| e_{\bf F}^{n+1}  \|_{L^3}^3 
   \le& C \Big( 
    \| \hat{e}_{\bf F}^{n+1} \|_{L^3}^3 
    + \|   e_{\bf F}^{n+1} - 2 e_{\bf F}^n + e_{\bf F}^{n-1}  \|_{L^3}^3 \Big)  \nonumber  
\\
   \le& C \Big( 
    \| \hat{e}_{\bf F}^{n+1} \|_{L^3}^3 
    + \|   e_{\bf F}^{n+1} - 2 e_{\bf F}^n + e_{\bf F}^{n-1}  \|_{H^1}^3 \Big)     
     \le C \Big( 
     \| \hat{e}_{\bf F}^{n+1} \|_{L^3}^3  
    + \dt^6 ( \hat{Q}_{1,\varepsilon}^{**} )^3  \Big) , 
    \label{convergence-est-8-6} 
\end{align} 
with the preliminary estimate \eqref{convergence-prelim est-3} applied in the last step. Meanwhile, the term $\| \hat{e}_{\bf F}^{n+1} \|_{L^3}$ could be analyzed with the help of the Sobolev inequalities:  
\begin{align} 
  \| \hat{e}_{\bf F}^{n+1} \|_{L^3} 
   \le&  C \| \hat{e}_{\bf F}^{n+1} \|_{H^\frac12} 
   \le  C  \| \hat{e}_{\bf F}^{n+1} \|_{H^{-1}}^\frac34 
   \cdot  \| \hat{e}_{\bf F}^{n+1} \|_{H^5}^\frac14   \nonumber 
\\
  \le& 
  C ( \hat{C}_{5,\varepsilon}^{**} )^{\frac14}  
   \cdot \| \hat{e}_{\bf F}^{n+1} \|_{H^{-1}}^{\frac34}  
  \le C ( \hat{C}_{5,\varepsilon}^{**} )^{\frac14}  
   \left( \dt^{\frac{15}{8}} + h^{\frac{15}{4}} \right)^{\frac34} .   
   \label{convergence-est-8-7} 
\end{align} 
Similarly, the preliminary estimate~\eqref{convergence-prelim est-1} has been applied in the third step and the a-priori assumption~\eqref{convergence-a priori-1} has been recalled in the last step, since $\hat{e}^{n+1}$ is only involved with the numerical errors at $t^n$ and $t^{n-1}$. In turn, a combination of~\eqref{convergence-est-8-6}-\eqref{convergence-est-8-7} and \eqref{convergence-est-8-5} leads to 
\begin{eqnarray} 
  \left| {\cal IE} \right|  \le 
    \hat{R}_{2, \varepsilon}^*  
     \Big( \dt^{\frac{135}{32}} + h^{\frac{135}{16}} \Big)   
    + \hat{R}_{3, \varepsilon}^* \dt^6  , 
      \label{convergence-est-8-8} 
\end{eqnarray}
with $\hat{R}_{2, \varepsilon}^* = C \hat{C}_{2, \varepsilon}^* ( \hat{C}_{5,\varepsilon}^{**} )^{\frac34}$, $\hat{R}_{3, \varepsilon}^* = C \hat{C}_{2, \varepsilon}^* ( \hat{Q}_{1,\varepsilon}^{**} )^3$. Going back~\eqref{convergence-est-8-4}, we get  
\begin{equation} 
  I_1 \ge  
   3 \Big( ( \Phi_N^{n+1} )^2  , ( e_{\bf F}^{n+1} )^2 \Big) 
     - \hat{R}_{2, \varepsilon}^*  
     \Big( \dt^{\frac{135}{32}} + h^{\frac{135}{16}} \Big)   
    - \hat{R}_{3, \varepsilon}^* \dt^6 . 
    \label{convergence-est-8-9}
\end{equation}

Therefore, a substitution of~\eqref{convergence-est-2}, \eqref{convergence-est-3}, \eqref{convergence-est-4-5}, \eqref{convergence-est-5}, \eqref{convergence-est-6-3}, \eqref{convergence-est-7}, \eqref{convergence-est-8-2} and \eqref{convergence-est-8-9} into \eqref{convergence-est-1} yields  
\begin{align} 
  & 
     \frac{1}{2 \dt} \Big( \| e^{n+1} \|_{-1, h}^2 - \| e^n \|_{-1, h}^2 
   + \| 2 e^{n+1} - e^n \|_{-1, h}^2 
   - \| 2 e^n - e^{n-1}  \|_{-1, h}^2 \Big)  \nonumber 
\\
  & 
    + A \varepsilon^{-1} \dt ( \| \nabla_{h, (4)} e^{n+1} \|_2^2 
    - \| \nabla_{h, (4)} e^n \|_2^2 ) 
     + 2 \Big( \varepsilon^{-1} \Big( 
     3 ( \Phi_N^{n+1} )^2  - 1  , 
      ( e_{\bf F}^{n+1} )^2 \Big)  
      + \varepsilon \| \nabla e_{\bf F}^{n+1} \|^2  \Big) \nonumber 
\\
  \le& 
    \| \tau^{n+1} \|_{-1, h}^2 + \|  e^{n+1} \|_{-1, h}^2 
    + \|  e_{\bf F}^{n+1} \|_{H^{-1}}^2   
    +  C ( \varepsilon ( \hat{C}_{7, \varepsilon}^{**} )^2 + \varepsilon^{-4 k_8} ) h^8  \nonumber 
\\
  & 
    + \varepsilon^{-2} ( \hat{Q}_{1, \varepsilon}^{**} )^2 \dt^4   
   + 2 \varepsilon^{-1} \Big( \hat{R}_{2, \varepsilon}^*  
     \Big( \dt^{\frac{135}{32}} + h^{\frac{135}{16}} \Big)   
    + \hat{R}_{3, \varepsilon}^* \dt^6 \Big) . 
      \label{convergence-est-9-1}
\end{align} 
On the other hand, a careful application of the linearized spectrum estimate~\eqref{spectrum-est-1} (reviewed in Proposition~\ref{Feng04NM}, by~\cite{feng04}) reveals that 
\begin{equation} 
    \varepsilon^{-1} \Big( 
     3 ( \Phi_N^{n+1} )^2  - 1  , 
      ( e_{\bf F}^{n+1} )^2 \Big)  
      + \varepsilon \| \nabla e_{\bf F}^{n+1} \|^2  
      \ge 
    - C_0 \| e_{\bf F}^{n+1} \|_{H^{-1} }^2 
    \ge  - C_0 \| e^{n+1} \|_{-1, h}^2 , 
    \label{convergence-est-9-2}
\end{equation} 
in which inequality~\eqref{lem-1-3} (in Lemma~\ref{lemma:1}) has been applied at the last step. Subsequently, with an introduction of error norm quantity 
\begin{equation} 
  E^n := \frac12 ( \| e^n \|_{-1, h}^2 + \| 2 e^n - e^{n-1}  \|_{-1, h}^2 )  
    + A \varepsilon^{-1} \dt^2 \| \nabla_{h, (4)} e^n \|_2^2 , 
    \label{defi-E-1} 
\end{equation} 
we obtain  
\begin{align} 
     \frac{1}{\dt} ( E^{n+1} - E^n ) 
  \le& 
    \| \tau^{n+1} \|_{-1, h}^2 + ( 2 C_0 + 1) \|  e^{n+1} \|_{-1, h}^2 
    + \|  e_{\bf F}^{n+1} \|_{H^{-1}}^2 
    +  C ( \varepsilon ( \hat{C}_{7, \varepsilon}^{**} )^2 + \varepsilon^{-4 k_8} ) h^8  \nonumber 
\\
  & 
    + \varepsilon^{-2} ( \hat{Q}_{1, \varepsilon}^{**} )^2 \dt^4  
   + 2 \varepsilon^{-1} \Big( \hat{R}_{2, \varepsilon}^*  
     \Big( \dt^{\frac{135}{32}} + h^{\frac{135}{16}} \Big)   
    + \hat{R}_{3, \varepsilon}^* \dt^6  \Big) \nonumber 
\\
   \le& 
     ( 2 C_0 + 2) \|  e^{n+1} \|_{-1, h}^2 + C ( \dt^4 + h^8 ) \varepsilon^{-2 j_4} 
    +  C ( \varepsilon ( \hat{C}_{7, \varepsilon}^{**} )^2 + \varepsilon^{-4 k_8} ) h^8 
     \nonumber 
\\
  & 
    + \varepsilon^{-2} ( \hat{Q}_{1, \varepsilon}^{**} )^2 \dt^4  
   + 2 \varepsilon^{-1} \Big( \hat{R}_{2, \varepsilon}^*  
     \Big( \dt^{\frac{135}{32}} + h^{\frac{135}{16}} \Big)   
    + \hat{R}_{3, \varepsilon}^* \dt^6 \Big)   \nonumber 
\\
   \le& 
     ( 4 C_0 + 4) E^{n+1} + C ( \dt^4 + h^8 ) 
     ( \varepsilon^{-2 j_4} 
    +  \varepsilon ( \hat{C}_{7, \varepsilon}^{**} )^2 + \varepsilon^{-4 k_8}   
    + \varepsilon^{-2} ( \hat{Q}_{1, \varepsilon}^{**} )^2 ) 
     \nonumber 
\\
  & 
   + 2 \varepsilon^{-1} \Big( \hat{R}_{2, \varepsilon}^*  
     \Big( \dt^{\frac{135}{32}} + h^{\frac{135}{16}} \Big)   
    + \hat{R}_{3, \varepsilon}^* \dt^6 \Big)  ,   
      \label{convergence-est-9-3}
\end{align} 
in which inequality~\eqref{lem-1-3} was applied in the second step, and the third step comes from the fact that $\| e^{n+1} \|_{-1, h}^2 \le 2 E^{n+1}$. In addition, under the following constraint for the time step and space mesh size:  
\begin{align} 
  &
   \hat{R}_{2, \varepsilon}^* \varepsilon^{-1}   
     \dt^{\frac{7}{32}} \le \frac14 ,  \quad 
     \hat{R}_{2, \varepsilon}^* \varepsilon^{-1}   
     h^{\frac{7}{16}}  \le \frac12 ,  \quad 
      \hat{R}_{3, \varepsilon}^* \varepsilon^{-1}   
     \dt^2 \le \frac14,  \nonumber  
\\
  &
  \mbox{i.e.} \quad 
  \dt , h \le  \min \Big( \Big( \frac{\varepsilon}{4} 
  ( \hat{R}_{2, \varepsilon}^* )^{-1} \Big)^{\frac{32}{7}} , 
   \Big( \frac{\varepsilon}{4} 
  ( \hat{R}_{3, \varepsilon}^* )^{-1} \Big)^{\frac12}  \Big) , 
   \label{convergence-condition-2} 
\end{align} 
the following inequality is valid: 
\begin{equation} 
\begin{aligned} 
  & 
     \frac{1}{\dt} ( E^{n+1} - E^n ) 
    \le ( 4 C_0 + 4) E^{n+1} + \hat{R}_{4, \varepsilon}^* ( \dt^4 + h^8 ) ,  
\\
  & 
     \hat{R}_{4, \varepsilon}^* = C ( \varepsilon^{-2 j_4} 
    +  \varepsilon ( \hat{C}_{7, \varepsilon}^{**} )^2 + \varepsilon^{-4 k_8}   
    + \varepsilon^{-2} ( \hat{Q}_{1, \varepsilon}^{**} )^2 ) + 1 . 
\end{aligned} 
      \label{convergence-est-9-4}
\end{equation} 
Of course,  $4 C_0 +4$ is a constant independent on $\varepsilon$, and $\hat{R}_{4, \varepsilon}^*$ depends on $\varepsilon^{-1}$ in a polynomial form. In turn, an application of discrete Gronwall inequality to~\eqref{convergence-est-9-4} gives the desired error estimate: 
\begin{equation} 
     \| e^{n+1} \|_{-1, h}^2  \le 2 E^{n+1} 
    \le C_2 {\rm e}^{(4 C_0 + 5)T} \hat{R}_{4, \varepsilon}^* ( \dt^4 + h^8 ) .   
      \label{convergence-est-9-5}
\end{equation} 

\noindent
{\bf Recovery of the a-priori assumption (\ref{convergence-a priori-1}) }  \, \, With the convergence estimate~\eqref{convergence-est-9-5} at hand, it is obvious that the a-priori assumption~\eqref{convergence-a priori-1} could be recovered at the next time step, $t^{n+1}$, under the following constraint for $\dt$ and $h$: 
\begin{equation} 
\begin{aligned} 
  &
   C_2 {\rm e}^{(4 C_0 +5) T} \hat{R}_{4, \varepsilon}^*  
    \max ( \dt^{\frac14} , h^\frac12 ) \le 1 ,    
\\
  &
  \mbox{i.e.} \quad 
  \dt \le  ( C_2 {\rm e}^{(4 C_0 +5) T} \hat{R}_{4, \varepsilon}^*  )^{-4} , \, \, \, 
   h \le  ( C_2 {\rm e}^{(4 C_0 +5) T} \hat{R}_{4, \varepsilon}^*  )^{-2} . 
\end{aligned} 
   \label{convergence-condition-3} 
\end{equation} 
In turn, we can take $C_0^* = 2 C_0 + \frac52$,  $\hat{R}^* = C_2^\frac12 {\rm e}^{(2 C_0 + \frac52)T} ( \hat{R}_{4, \varepsilon}^* )^\frac12$. This matches the form of~\eqref{convergence-1}, in which the integer index $J_0$ could be chosen according to the form of $\hat{R}_{4, \varepsilon}^*$. 

Moreover, based on the dependence of $\hat{R}_{2, \varepsilon}^*$, $\hat{R}_{4, \varepsilon}^*$ in terms of $\varepsilon^{-1}$, the values of $J_1$ and $J_2$ could be appropriately taken. In other words, the constraints \eqref{convergence-condition-2} and \eqref{convergence-condition-3} for $\dt$ and $h$ could be converted into the form of~\eqref{convergence-condition-1}. As a result, the proof of Theorem~\ref{thm:convergence} is finished.

\begin{rem} 
A refined error estimate has been reported for a Crank-Nicolson-style, energy stable semi-discrete numerical scheme~\cite{guo21}, while the space was kept continuous. For the fully discrete Crank-Nicolson-style numerical schemes~\cite{cheng16a, diegel16, guo16}, with either finite difference, mixed finite element or Fourier pseudo-spectral spatial approximation, the analysis could be naturally extended, following similar ideas of this article. Of course, for the BDF-style, energy stable numerical schemes, with other choices of spatial discretization, the refined error analysis (with an improved convergence constant) could also be similarly derived. The technical details are expected to be very involved and are left to interested readers. 

The refined error estimate for the third and even higher order (in time) numerical schemes for the Allen-Cahn/Cahn-Hilliard equation, such as \cite{chenXW2022, cheng2022a}, will be considered in the future works. 
\end{rem}

	\section {The numerical results}
	\label{sec:numerical results}

In this section we provide a numerical accuracy check for the proposed numerical scheme~\eqref{scheme-BDF-CH-1}. The computational domain is set as $\Omega = (0,3.2)^3$, and the exact profile for the phase variable is chosen to be
	\begin{equation}
\Phi (x,y,t) =  \cos ( \frac58 \pi x) \sin( \frac58 \pi y) \cos ( \frac58 \pi z ) \cos( t) .
	\label{AC-1}
	\end{equation}
Of course, we need to add an artificial, time-dependent forcing term to make this exact profile satisfy the original equation \eqref{CH equation}. Based on this PDE with a forced term, the proposed fourth order difference scheme~\eqref{scheme-BDF-CH-1} is implemented, using the multi-grid approach. Such an iteration solver has been outlined in~\cite{guo16} for a second order difference scheme, while its extension to the fourth order finite difference approximation is straightforward.

In the accuracy check, we set the time step size as $\dt = h^2$, with $h = \frac{L}{N}$ ($L=3.2$), so that the second order temporal accuracy corresponds to the fourth order spatial accuracy. The final time set by $T=0.16$, and the surface diffusion parameter is given by $\varepsilon=1$. A sequence of spatial resolutions are taken as $N = 32:16:128$.  The expected numerical accuracy assumption $e=C ( \dt^2 + h^4)$ indicates that $\ln |e|=\ln C - 4 \ln N$. In turn, we plot $\ln |e|$ versus $\ln N$ to demonstrate the temporal convergence order. The fitted line displayed in Figure~\ref{fig1} shows an approximate slope of -3.9905, which confirms almost a perfect fourth order convergence in space.

	\begin{figure}
	\begin{center}
\includegraphics[width=3.0in]{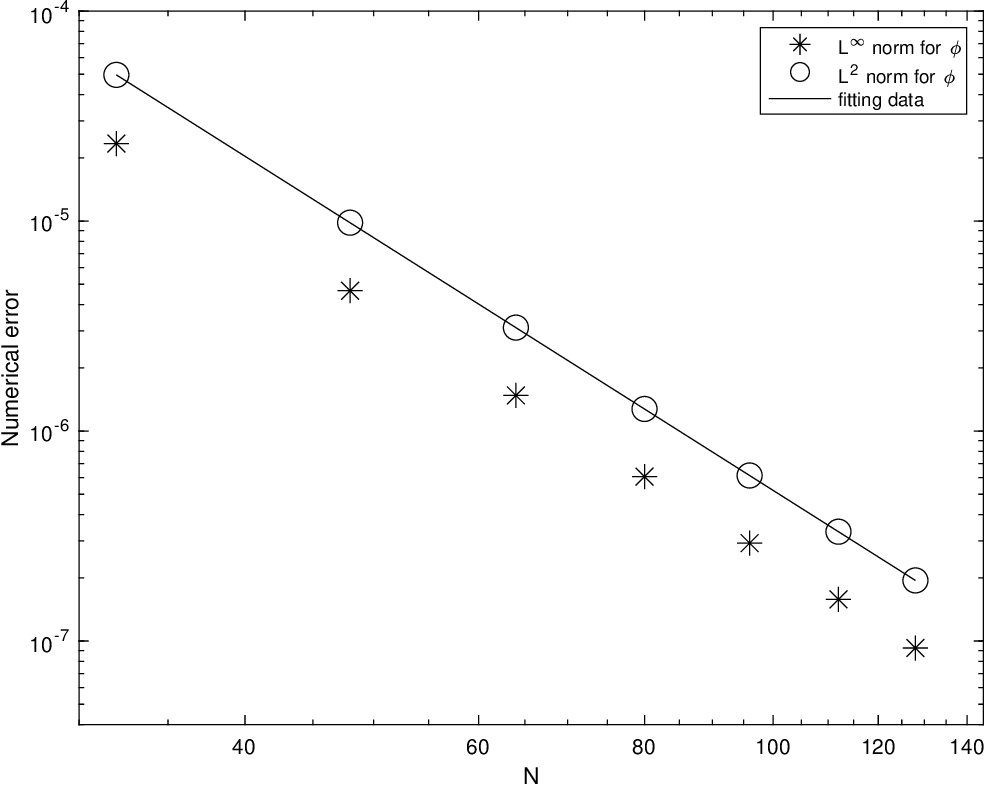}
	\end{center}
\caption{The discrete $\ell^2$ and $\ell^\infty$ numerical errors versus spatial resolution  $N$ for $N = 32:16:128$, and the time step size is set as $\dt = h^2$. The numerical results are obtained by the computation using the proposed numerical scheme~\eqref{scheme-BDF-CH-1}. The surface diffusion parameter is taken to be $\varepsilon=1$. The data lie roughly on curves $CN^{-4}$ for appropriate choices of $C$, confirming the full fourth order spatial accuracy and second order temporal accuracy.}
	\label{fig1}
	\end{figure} 
	
This accuracy check confirms the local-in-time convergence property of the fourth order finite difference numerical scheme~\eqref{scheme-BDF-CH-1}. In terms of the long time behavior of this numerical scheme, the two-dimensional simulation results of the coarsening process has been presented in~\cite{cheng2019a}, in a square domain $\Omega = (0, 12.8)^2$, with a surface diffusion coefficient $\varepsilon=0.03$. An almost perfect $t^{-1/3}$ energy dissipation law has been reported in the long time simulation, which in turn verifies the theoretical analysis provided in this article.

\section{Concluding remarks}  \label{sec:conclusion} 
A refined error analysis is presented for a modified BDF2 in time, fourth order long stencil finite difference numerical scheme to the 3-D Cahn-Hilliard equation. The Fourier analysis has to be applied to analyze the functional norms associated with the fourth order long stencil spatial approximation, and the difference between the discrete and continuous inner product has to be estimated in an accurate way. For the fully discrete numerical scheme, a uniform-in-time $H^m$ bound of the numerical solution, for any $m \ge 2$, as well as the associated $H^k$ bounds for the first and second order temporal difference stencil, namely $\| \phi^{n+1}_{\bf F} - \phi_{\bf F}^n \|_{H^k}$ and $\| \phi^{n+1}_{\bf F} - 2 \phi_{\bf F}^n + \phi_{\bf F}^{n-1} \|_{H^k}$, are needed to obtain a refined convergence constant. All these functional bounds have to be carefully derived through Sobolev estimates in the fourth order finite difference space. Certain recursive analysis has been applied in the analysis for the BDF2 temporal stencil. In the error estimate, we have to apply a spectrum estimate for the linearized Cahn-Hilliard operator, so that all the numerical error inner product terms, in the discrete $H^{-1}$ space, are analyzed in a unified way. Using this analytic approach, an application of the discrete Gronwall inequality avoids a convergence constant of the form $\mbox{exp} ( C T \varepsilon^{-m})$; instead, the constant turns out to be dependent on $\varepsilon^{-1}$ only in a polynomial order. A three-dimensional numerical example of accuracy check is also presented, which confirms the theoretical analysis in this article.

	\section{Acknowledgments}
This work is supported in part by the NSF DMS-2012269, DMS-2309548 (C.~Wang), NSFC 12001358 (Y. Yan), and NSFC 11971342, 12371401 (X.~Yue).

	\appendix

	\section{Proof of Lemma~\ref{lemma:1} }
	\label{lemma 1-proof}

	\begin{proof}
For the discrete grid function $f$ and its continuous extension $f_{\bf F}$, given by~\eqref{def:Fourier} and \eqref{def:extension}, Parseval's identity (at both the discrete and continuous levels) 
implies that
\begin{equation}  
   \nrm{f}^2_{2} = \nrm{f_{{\bf F}}}^2_{L^{2}} 
   = L^3 \sum^{K}_{\ell,m,n=-K}
|\hat{f}^N_{\ell,m,n}|^2 ,  \quad \mbox{since $h N = L$} . 
\end{equation}

For the comparison between the discrete and continuous gradient and Laplacian operators, we start with the following Fourier expansions:
	\begin{align}
(D_x f)_{i+1/2,j,k} 
=& \sum^{K}_{\ell,m,n=-K} \mu_{\ell}  \hat{f}^N_{\ell,m,n}  
 {\rm e}^{2 \pi i  ( \ell x_{i+1/2} 
  + m y_{j} + n z_{k} )/ L }  ,  \label{Lemma 1-2-0}
        \\
 (D_x^2 f)_{i,j,k} 
=& \sum^{K}_{\ell,m,n=-K} - | \mu_{\ell} |^2  \hat{f}^N_{\ell,m,n}  
 {\rm e}^{2 \pi i  ( \ell x_{i+1/2} 
  + m y_{j} + n z_{k} )/ L }  ,  \label{Lemma 1-2-0-2}
	\\
  \partial_x f_{{\bf F}} (x,y,z) =& 
\sum^{K}_{\ell,m,n=-K}  \nu_{\ell} 
 \hat{f}^N_{\ell,m,n} {\rm e}^{2 \pi i ( \ell x 
 + m y + n z )/ L } ,  \label{Lemma 1-2-1}
\\
  \mbox{with} \quad & 
\mu_{\ell} = -\frac{2 i \sin{\frac{\ell\pi h}{L}}}{h}, \quad
\nu_{\ell} = -\frac{2 i \ell\pi}{L}.  \label{Lemma 1-2-2} 
\end{align}
In turn, an application of Parseval's identity yields
\begin{eqnarray}
\nrm{D_x f}^2_2 = L^3\sum^{K}_{\ell,m,n=-K} 
 |\mu_{\ell}|^2|\hat{f}^N_{\ell,m,n}|^2, \quad 
\nrm{\partial_x f_{{\bf F}}}^2_{L^2} =
 L^3\sum^{K}_{\ell,m,n=-K}|\nu_{\ell}|^2|
  \hat{f}^N_{\ell,m,n}|^2.
\end{eqnarray}
The comparison of Fourier eigenvalues between $|\mu_{\ell}|$ and $|\nu_{\ell}|$ shows that
\begin{equation}
\frac{2}{\pi} |\nu_{\ell}| \le |\mu_{\ell}| \le |\nu_{\ell}|, 
\quad \rm{for}  \quad -K \le {\ell} \le K . \label{Lemma 1-2-3}
\end{equation}
This indicates that
\begin{equation}
\frac{2}{\pi} \nrm{\partial_x \phi_{{\bf F}}}_{L^2} 
\le \nrm{D_x \phi}_2 \le \nrm{\partial_x \phi_{{\bf F}}}_{L^2}.
\end{equation}
Similar comparison estimates can be derived in the same manner to reveal that 
\begin{equation}
\frac{2}{\pi}\|\nabla\phi_{{\bf F}}\|_{L^2} \le 
\|\nabla_{h}\phi\|_2 \le \|\nabla\phi_{{\bf F}}\|_{L^2}.  
 \label{Lemma 1-2}
\end{equation} 
This gives (\ref{lem-1-2}) in Lemma~\ref{lemma:1}, with $j=0$. It can be proved analogously that
\begin{eqnarray}
&&  ( 2 \pi^{-1} )^{2j} 
 \|\Delta^j \phi_{{\bf F}}\|_{L^2} 
  \le \|\Delta_{h}^j \phi\|_2 
  \le \|\Delta^j \phi_{{\bf F}}\|_{L^2},  \label{Lemma 1-3} \\
&&  ( 2 \pi^{-1} )^{2j+1} 
\|\nabla\Delta^j \phi_{{\bf F}}\|_{L^2} 
\le \|\nabla_{h}\Delta_{h}^j \phi\|_2 
\le \|\nabla\Delta^j \phi_{{\bf F}}\|_{L^2} ,  
  \label{Lemma 1-4}  
\end{eqnarray}
for any $j \ge 1$. 
As a result, both (\ref{lem-1-1}) and (\ref{lem-1-2}) have been established. 

In turn, estimate~\eqref{H2-est-6-7} is a direct consequence of~\eqref{Lemma 1-3}-\eqref{Lemma 1-4}, combined with the preliminary inequality~\eqref{inequality-0-2} (in Lemma~\ref{lem: inequality}), as well as the elliptic regularity at the continuous level: 
\begin{equation} 
\begin{aligned}  
 \|  \Delta_{h, (4)} f \|_2  \le & \frac43 \| \Delta_h f \|_2 
  \le \frac43 \| \Delta f_{\bf F} \|  
  \le \frac43 C_1^* \| \Delta^2 f_{\bf F} \|  
\\
  \le & \frac43 C_1^* (D_4)^{-1} \| \Delta_h^2 f \|_2
  \le \frac43 C_1^* (D_4)^{-1} \| \Delta_{h, (4)}^2 f \|_2  ,  
\end{aligned} 
    \label{Lemma 1-4-2}  
\end{equation}
in which $C_1^*$ is denoted as the elliptic regularity constant at the continuous level. Therefore, \eqref{H2-est-6-7}  has been proven, by taking $C_1 = \frac43 C_1^* (D_4)^{-1}$.

To prove \eqref{lem-1-3}, we first observe that $\int_\Omega f_{\bf F} , d \x = | \Omega | \cdot \overline{f} = 0$, since $f_{\bf F} = 0$. In turn, a careful calculation reveals the discrete Fourier expansion for $( - \Delta_{h, (4)})^{-1} f$ and the continuous Fourier expansion for 
$( - \Delta)^{-1} f_{\bf F}$
	\begin{align} 
	  &
  ( - \Delta_{h, (4)})^{-1} f_{i,j,k} = 
  \sum^{K}_{\ell,m,n=-K, (\ell, m,n) \ne \0}
  ( \lambda_{\ell, m, n}^{(4)} )^{-1} \hat{f}^N_{\ell,m,n} 
  {\rm e}^{2 \pi i ( \ell x_{i} + m y_{j} + n z_{k} )/ L }  , 
   \label{Lemma 1-5-1} 
\\
  &
  ( - \Delta)^{-1} f_{{\bf F}}(x,y,z) 
  = \sum^{K}_{\ell,m,n=-K, (\ell, m,n) \ne \0} 
  \Lambda_{\ell, m, n}^{-1} \hat{f}^N_{\ell,m,n} 
  {\rm e}^{2 \pi i ( \ell x + m y + n z )/ L }  , 
  \label{Lemma 1-5-2} 
\\
  & 
    \mbox{with} \quad 
  \lambda_{\ell, m, n}^{(4)} = \lambda_\ell^{(4)} + \lambda_m^{(4)} 
  + \lambda_n^{(4)} ,  \quad  \lambda_k^{4)}  =\lambda_k + \frac{h^2}{12} \lambda_k^2 , 
  \, \, \, \lambda_k =  \frac{4 \sin^2 
  {\frac{k \pi h}{L}}}{h^2} , \, \, ( 0 \le k \le K ) , 
   \label{Lemma 1-5-3}  
\\
  & 
      \qquad  \, \, 
  \Lambda_{\ell, m, n} = \Lambda_\ell + \Lambda_m 
  + \Lambda_n ,  \quad  
  \Lambda_k = \frac{4 k^2 \pi^2}{L^2} , \, \, ( 0 \le k \le K ) .  
   \label{Lemma 1-5-4} 
	\end{align}
with the zero vector defined as $\0 = (0,0,0)$. Furthermore, an application of the Parseval equality to the discrete Fourier expansion for $ \nabla_{h, (4)} ( ( - \Delta_{h, (4)})^{-1} f )$ 
and the continuous Fourier expansion for $\nabla \left( ( - \Delta)^{-1} f_{\bf F} \right)$ implies that 
	\begin{align} 
	  &
  \| f \|_{-1, h}^2  = 
  \nrm{ \nabla_{h, (4)} ( - \Delta_{h, (4)})^{-1} f }_2^2 = 
  L^3 \sum^{K}_{\ell,m,n=-K, (\ell, m,n) \ne \0}
  ( \lambda_{\ell, m, n}^{(4)} )^{-1}  
  \left| \hat{f}^N_{\ell,m,n} \right|^2 , 
   \label{Lemma 1-6-1} 
\\
  &
 \| f_{\bf F} \|_{H^{-1}}^2 
 =  \nrm{ \nabla ( - \Delta)^{-1} f_{{\bf F}} }^2  
  = L^3 \sum^{K}_{\ell,m,n=-K, (\ell, m,n) \ne \0} 
  \Lambda_{\ell, m, n}^{-2} 
  ( | \nu_{\ell} |^2 + | \nu_m |^2 + | \nu_n |^2 )  
  \left| \hat{f}^N_{\ell,m,n}  \right|^2  \nonumber 
\\
  &  \qquad \qquad \qquad \qquad \qquad \qquad 
   = L^3 \sum^{K}_{\ell,m,n=-K, (\ell, m,n) \ne \0} 
  \Lambda_{\ell, m, n}^{-1} 
  \left| \hat{f}^N_{\ell,m,n}  \right|^2 , 
  \label{Lemma 1-6-2} 
	\end{align}
based on the following fact 
\begin{equation} 
  | \nu_{\ell} |^2 + | \nu_m |^2 + | \nu_n |^2 
  = \Lambda_{\ell, m, n} .  
\end{equation} 
Meanwhile, the eigenvalue comparison estimate \eqref{Lemma 1-2-3} indicates that 
\begin{eqnarray} 
  \frac{4}{\pi^2} \Lambda_{\ell, m, n} 
  \le \lambda_{\ell, m, n}^{(4)} \le \Lambda_{\ell, m, n} , 
  \quad \forall -K \le \ell, m, n \le K . 
  \label{Lemma 1-8} 
	\end{eqnarray}
Its combination with \eqref{Lemma 1-6-1} and \eqref{Lemma 1-6-2} yields 
	\begin{eqnarray} 
  \nrm{ f_{\bf F} }_{H^{-1} }^2 
   \le \nrm{ f }_{-1, h}^2 
   \le \frac{\pi^2}{4} \nrm{ f_{\bf F} }_{H^{-1} }^2 . 
  \label{lemma 1-9} 
	\end{eqnarray}
Therefore, \eqref{lem-1-3} has been proven, by taking $D_{-1} = \frac{\pi}{2}$.  

Similar techniques can be used to establish~\eqref{lem-1-4}. We have the discrete Fourier expansion for $\Delta_{h, (4)} f$ and the continuous Fourier expansion for 
$\Delta f_{\bf F}$
	\begin{align} 
	  &
   \Delta_{h, (4)} f_{i,j,k} =  - \sum^{K}_{\ell,m,n=-K}
  \lambda_{\ell, m, n}^{(4)} \hat{f}^N_{\ell,m,n} 
  {\rm e}^{2 \pi i ( \ell x_{i} + m y_{j} + n z_{k} )/ L }  ,   
  \quad \mbox{so that} 
   \label{Lemma 1-10-1} 
\\
  &   
  ( \Delta_{h, (4)} f)_{\bf F} (x,y,z) = 
  - \sum^{K}_{\ell,m,n=-K}
  \lambda_{\ell, m, n}^{(4)} \hat{f}^N_{\ell,m,n} 
  {\rm e}^{2 \pi i ( \ell x + m y + n z )/ L } , 
  \label{Lemma 1-10-2} 
\\
  &
    \Delta f_{{\bf F}}(x,y,z) 
  = - \sum^{K}_{\ell,m,n=-K} 
  \Lambda_{\ell, m, n} \hat{f}^N_{\ell,m,n} 
  {\rm e}^{2 \pi i ( \ell x + m y + n z )/ L }  .  
  \label{Lemma 1-10-3} 
	\end{align}
Estimate (\ref{lem-1-4}), with $m=0$, is a direct consequence of the eigenvalue comparison estimate~\eqref{Lemma 1-8}. To make a comparison between their $H^m$ norm, we observe 
the following Fourier expansion of their corresponding 
$\partial_x$ derivatives: 
	\begin{align} 
	 &  
  \partial_x ( \Delta_{h, (4)} f)_{\bf F} (x,y,z) = 
  - \sum^{K}_{\ell,m,n=-K}
  \nu_\ell \lambda_{\ell, m, n}^{(4)} \hat{f}^N_{\ell,m,n} 
  {\rm e}^{2 \pi i ( \ell x + m y + n z )/ L } , 
   \label{Lemma 1-11-1} 
\\
  &
  \partial_x \Delta f_{{\bf F}}(x,y,z) 
  = - \sum^{K}_{\ell,m,n=-K} 
  \nu_\ell \Lambda_{\ell, m, n} \hat{f}^N_{\ell,m,n} 
  {\rm e}^{2 \pi i ( \ell x + m y + n z )/ L }  ,   
  \label{Lemma 1-11-2} 
	\end{align}
and an application of Parseval equality implies that 
\begin{align} 
  &  
  \nrm{ \partial_x ( \Delta_h f)_{\bf F} }^2  
  = L^3 \sum^{K}_{\ell,m,n=-K}
  | \nu_\ell |^2 ( \lambda_{\ell, m, n}^{(4)} ) ^2 
  \left| \hat{f}^N_{\ell,m,n} \right|^2 ,  
   \label{Lemma 1-12-1}  
\\
  &
  \nrm{ \partial_x \Delta f_{{\bf F}} }^2  
  = L^3  \sum^{K}_{\ell,m,n=-K} 
  | \nu_\ell |^2 \Lambda_{\ell, m, n}^2 
  \left| \hat{f}^N_{\ell,m,n}  \right|^2 .    
  \label{Lemma 1-12-2} 
	\end{align}
By the estimate eigenvalue comparison estimate \eqref{Lemma 1-8}, we conclude that 
\begin{equation}   
  \nrm{ \partial_x ( \Delta_h f)_{\bf F} }^2  
  \le \nrm{ \partial_x \Delta f_{{\bf F}} }^2  .    
  \label{Lemma 1-13} 
	\end{equation}
Similar estimates can be derived for other partial derivatives, and higher order derivatives. 
Therefore, (\ref{lem-1-4}) is valid for any $m \ge 0$. The proof of Lemma~\ref{lemma:1} is complete. 
	\end{proof}

	\section{Proof of Lemma~\ref{lemma:2} }
	\label{lemma 2-proof}

	\begin{proof}
Based on the Fourier expansions~\eqref{Lemma 1-2-0}-\eqref{Lemma 1-2-2}, we apply the Parseval equality and get 
\begin{align} 
  & 
  \nrm{ D_x f }_2^2  
= L^3 \sum^{K}_{\ell,m,n=-K}  | \mu_{\ell} |^2 
\cdot \left| \hat{f}^N_{\ell,m,n} \right|^2 ,   \quad 
   \| D_x^2 f \|_2^2  
= L^3 \sum^{K}_{\ell,m,n=-K}  | \mu_{\ell} |^4 
\cdot \left| \hat{f}^N_{\ell,m,n} \right|^2
 \label{Lemma 2-1-1}
	\\
	&
  \nrm{ \partial_x f_{{\bf F}} }^2 = 
 L^3 \sum^{K}_{\ell,m,n=-K}  | \nu_{\ell} |^2 
 \cdot \left| \hat{f}^N_{\ell,m,n} \right|^2 ,  
   \label{Lemma 2-1-2}
\\
  & 
  \mbox{so that} \quad 
  \nrm{ \partial_x f_{{\bf F}} }^2 
   - \nrm{ {\cal D}_{x,(4)} f }_2^2  = 
 L^3 \sum^{K}_{\ell,m,n=-K}  
  \left( | \nu_{\ell} |^2 - ( | \mu_{\ell} |^2 + \frac{h^2}{12} | \mu_\ell |^4 ) \right) 
    \left| \hat{f}^N_{\ell,m,n} \right|^2 .  
    \label{Lemma 2-1-3} 
\end{align}
Furthermore, the following estimates are available: 
\begin{align} 
  & 
  \sin{\frac{\ell\pi h}{L}}  
  = \frac{\ell\pi h}{L} - \frac16 \Big( \frac{\ell\pi h}{L} \Big)^3 
  - \frac{\cos \eta}{120} \Big( \frac{\ell\pi h}{L} \Big)^5 , 
  \quad \mbox{$\eta \in (0, \frac{\pi}{2}) $} ,  
  \label{Lemma 2-2-1}  
\\
  & 
  \sin^2 \Big( \frac{\ell\pi h}{L} \Big)   
  = \Big( \frac{\ell\pi h}{L} \Big)^2 - \frac13 \Big( \frac{\ell\pi h}{L} \Big)^4 
  +  ( \frac{1}{36} + \frac{\cos \eta}{60} ) \Big( \frac{\ell\pi h}{L} \Big)^6 
  + \xi_1 \Big( \frac{\ell\pi h}{L} \Big)^8  , 
  \quad \mbox{$| \xi_1 | \le \frac{1}{360}$} ,  
  \label{Lemma 2-2-2}     
\\
  & 
   1 + \frac{h^2}{12} | \mu_{\ell} |^2 
   = 1 + \frac13 \sin^2 \Big( \frac{\ell\pi h}{L} \Big)   
  = 1 + \frac13 \Big( \frac{\ell\pi h}{L} \Big)^2 + \xi_2 \Big( \frac{\ell\pi h}{L} \Big)^4 , 
  \quad \mbox{$- \frac{1}{9} \le \xi_2  \le - \frac{1}{12}$} ,  
  \label{Lemma 2-2-3}        
\\
  & 
  \mbox{so that} \, \, \,   0 \le | \nu_{\ell} |^2 
   - | \mu_{\ell} |^2 ( 1 + \frac{h^2}{12} | \mu_{\ell} |^2 ) 
  =  ( \frac13 - \frac{\cos \eta}{15} - 4 \xi_2 ) h^4 \Big( \frac{\ell\pi}{L} \Big)^6 
  + \xi_3 h^6 \Big( \frac{\ell\pi}{L} \Big)^8  \, \, \, (\mbox{$ - \frac29 \le \xi_3 \le 0$}) 
  \nonumber 
\\
  &  \qquad \qquad 
    \le h^4 \Big( \frac{\ell\pi}{L} \Big)^6  = \frac{1}{64} h^4  
     \left( \frac{2 \ell\pi}{L} \right)^6  , 
     \label{Lemma 2-2-4} 
\end{align} 
in which the eigenvalue representation formula~\eqref{Lemma 1-2-2}  was recalled, and a Taylor expansion was performed in~\eqref{Lemma 2-2-1}. Going back~\eqref{Lemma 2-1-3}, we arrive at 
\begin{eqnarray}
  0 \le \nrm{ \partial_x f_{{\bf F}} }^2 
   - \nrm{ {\cal D}_{x, (4)} f }_2^2  \le  
 \frac{L^3}{64} \sum^{K}_{\ell,m,n=-K}  
   h^4 \left( \frac{2 \ell\pi}{L} \right)^6 
    \left| \hat{f}^N_{\ell,m,n} \right|^2  
    = \frac{h^4}{64} \nrm{ \partial_x^3 f_{{\bf F}} }^2 .   
    \label{Lemma 2-3} 
\end{eqnarray}
Two other estimates could be derived in a similar way:  
\begin{eqnarray}
  0 \le \nrm{ \partial_y f_{{\bf F}} }^2 
   - \nrm{ {\cal D}_{y, (4)} f }_2^2    
  \le \frac{h^4}{64} \nrm{ \partial_y^3 f_{{\bf F}} }^2 , \, \,  
     0 \le \nrm{ \partial_z f_{{\bf F}} }^2 
   - \nrm{ {\cal D}_{z, (4)} f }_2^2    
  \le \frac{h^4}{64} \nrm{ \partial_z^3 f_{{\bf F}} }^2 . 
    \label{Lemma 2-4} 
\end{eqnarray}
A combination of \eqref{Lemma 2-3} and \eqref{Lemma 2-4} results in 
\begin{eqnarray}
  0 \le \nrm{ \nabla f_{{\bf F}} }^2 
   - \nrm{ \nabla_{h, (4)} f }_2^2      
  \le \frac{h^4}{64} \nrm{ f_{{\bf F}} }_{H^3}^2 . 
    \label{Lemma 2-5} 
\end{eqnarray}
The proof of Lemma~\ref{lemma:2} is complete. 
\end{proof}

 	\section{Proof of Lemma~\ref{lemma:3} }
	\label{lemma 3-proof}

	\begin{proof}
(1) In addition to (\ref{def:Fourier}) and (\ref{def:extension}), we set the discrete Fourier expansion 
for $g$ and its continuous extension given by 
	\begin{align}
g_{i,j,k} =& \sum^{K}_{\ell,m,n=-K}
\hat{g}^N_{\ell,m,n} {\rm e}^{2 \pi i ( \ell x_{i} 
 + m y_{j} + n z_{k} )/ L }  ,
   \label{Lemma 3-1-1}
\\
   {\mathbf g} (x,y,z) 
   =& g_{{\bf F}}(x,y,z) = \sum^{K}_{\ell,m,n=-K} 
 \hat{g}^N_{\ell,m,n} {\rm e}^{2 \pi i 
   ( \ell x + m y + n z )/ L }  .  
   \label{Lemma 3-1-2}
	\end{align}
In turn, we assume the Fourier expansion for the product function ${\mathbf f} \cdot {\mathbf g}$ as 
\begin{equation}
   ( {\mathbf f} \cdot {\mathbf g} ) (x,y,z) 
   = \sum^{2K}_{\ell,m,n=-2K} 
 \hat{h}^N_{\ell,m,n} {\rm e}^{2 \pi i 
   ( \ell x + m y + n z )/ L }  .  
   \label{Lemma 3-2}
	\end{equation} 
In particular, it is observed that ${\mathbf f} \cdot {\mathbf g} \in {\cal B}^{2K}$. Consequently, the discrete product function $f \cdot g$ turns out to be the projection of ${\mathbf f} \cdot {\mathbf g}$ at the numerical grid points: 
    \begin{equation} 
  ( f \cdot g )_{i,j,k} 
  = ( {\mathbf f} \cdot {\mathbf g} ) (x_i, j_j, z_k ) 
  = {\cal I}_N \left( {\mathbf f} \cdot {\mathbf g} \right) 
  (x_i, j_j, z_k ) .   \label{Lemma 3-3}
	\end{equation} 
A more careful expansion shows that 
\begin{equation} 
  \overline{ f \cdot g } 
  = \frac{1}{|\Omega|}  \int_\Omega 
   {\cal I}_N ( {\mathbf f} \cdot {\mathbf g} ) d \x 
  = \frac{1}{|\Omega|}  \int_\Omega 
    {\mathbf f} \cdot {\mathbf g}  d \x , 
    \label{Lemma 3-4}
	\end{equation}
which is equivalent to \eqref{lem-3-1}. In more detail, the first step comes from the fact that ${\cal I}_N {\mathbf f} \cdot {\mathbf g} \in {\cal B}^K$, and the second step is based on the fact that, there is no aliasing error on the mode of $(\ell, m, n) = \0$, between ${\mathbf f} \cdot {\mathbf g} \in {\cal B}^{2K}$ and its projection onto ${\cal B}^K$. 

(2) In the general case, we note that $f$ and $g$ are discrete interpolations of ${\cal I}_N {\mathbf f} \in {\cal B}^K$ and ${\cal I}_N {\mathbf g} \in {\cal B}^K$. By \eqref{lem-3-1}, we arrive at 
    \begin{eqnarray} 
   \left| \left\langle f , g \right\rangle  
  - \left( {\mathbf f} , {\mathbf g}  \right)  \right| 
   &=&  \left| \left( {\cal I}_N {\mathbf f} , 
   {\cal I}_N {\mathbf g} \right)   
  - \left( {\mathbf f} , {\mathbf g}  \right)  \right| 
  \le \left|  \left( {\cal I}_N {\mathbf f}  - {\mathbf f} , 
   {\cal I}_N {\mathbf g} \right)  \right| 
  + \left|  \left( {\mathbf f} , 
  {\cal I}_N {\mathbf g} - {\mathbf g}  \right)  \right| 
  \nonumber  
\\
  &\le& 
  \nrm{ {\cal I}_N {\mathbf f}  - {\mathbf f} }  
   \cdot \nrm{ {\cal I}_N {\mathbf g}  }  
  + \nrm{ {\mathbf f} }  \cdot  
  \nrm{ {\cal I}_N {\mathbf g} - {\mathbf g} }  \nonumber  
\\
  &\le& C  h^8 \left( 
  \nrm{ {\mathbf f} }_{H^8}  \cdot \nrm{ {\mathbf g} }_{H^2}
   + \nrm{ {\mathbf f} }_{H^2}  
   \cdot \nrm{ {\mathbf g} }_{H^8}  \right) ,  
  \label{Lemma 3-5}
	\end{eqnarray}
which gives (\ref{lem-3-2}), with the Fourier spectral 
interpolation approximation (\ref{spectral-approximation-2}) 
applied at the last step. The proof of Lemma~\ref{lemma:3} 
is complete. 
\end{proof}

	\bibliographystyle{plain}
	\bibliography{revision1.bib}

	\end{document}